\documentclass{article}

\usepackage{selectp}
%\outputonly{1-12} % for main text
%\outputonly{13-28} % for supplement

% if you need to pass options to natbib, use, e.g.:
\PassOptionsToPackage{numbers, compress}{natbib}
% before loading neurips_2019

% ready for submission
%\usepackage{neurips_2019}

% to compile a preprint version, e.g., for submission to arXiv, add add the
% [preprint] option:
% \usepackage[preprint]{neurips_2019}

% to compile a camera-ready version, add the [final] option, e.g.:
\usepackage[final]{neurips_2019}

% to avoid loading the natbib package, add option nonatbib:
%     \usepackage[nonatbib]{neurips_2019}

\usepackage{amssymb,amsmath,amsthm,amsfonts,bbm}
\usepackage{hyperref}
\usepackage{cleveref}

\usepackage{constants}
\usepackage{soul}

%\numberwithin{equation}{section}
%\numberwithin{theorem}{section}

\usepackage[utf8]{inputenc} % allow utf-8 input
\usepackage[T1]{fontenc}    % use 8-bit T1 fonts
\usepackage{hyperref}       % hyperlinks
\usepackage{url}            % simple URL typesetting
\usepackage{booktabs}       % professional-quality tables
\usepackage{amsfonts}       % blackboard math symbols
\usepackage{nicefrac}       % compact symbols for 1/2, etc.
\usepackage{microtype}      % microtypography

\usepackage{thmtools}
\usepackage{thm-restate}

\title{First order expansion of convex regularized estimators}

% The \author macro works with any number of authors. There are two commands
% used to separate the names and addresses of multiple authors: \And and \AND.
%
% Using \And between authors leaves it to LaTeX to determine where to break the
% lines. Using \AND forces a line break at that point. So, if LaTeX puts 3 of 4
% authors names on the first line, and the last on the second line, try using
% \AND instead of \And before the third author name.

\author{%
  Pierre C Bellec, \\
  Department of Statistics,\\
  Rutgers University,\\
  501 Hill Center, \\
  Piscataway, NJ 08854, USA. \\
  \texttt{pierre.bellec@rutgers.edu} %\\
  % examples of more authors
  \And
  Arun K Kuchibhotla, \\
  Department of Statistics,\\
  The Wharton School,\\
  University of Pennsylvania,\\
  Philadelphia, PA 19104, USA.\\
  \texttt{arunku@upenn.edu}
  % Affiliation \\
  % Address \\
  % \texttt{email} \\
  % \AND
  % Coauthor \\
  % Affiliation \\
  % Address \\
  % \texttt{email} \\
  % \And
  % Coauthor \\
  % Affiliation \\
  % Address \\
  % \texttt{email} \\
  % \And
  % Coauthor \\
  % Affiliation \\
  % Address \\
  % \texttt{email} \\
}

\usepackage{mathtools}
% \mathtoolsset{showonlyrefs}
\usepackage{thmtools}

\declaretheorem[name=Theorem,numberwithin=section]{theorem}
\declaretheorem[name=Proposition,sibling=theorem]{proposition}
\declaretheorem[name=Lemma,sibling=theorem]{lemma}

\crefname{theorem}{theorem}{theorems}
\crefname{lemma}{lemma}{lemmas}
\crefname{proposition}{proposition}{propositions}
\crefname{assumption}{assumption}{assumptions}
\crefname{example}{example}{examples}
\crefname{corollary}{corollary}{corollaries}

\usepackage[textsize=scriptsize,textwidth=1.3in]{todonotes}

\usepackage{enumitem}

\usepackage{times}

\newcommand{\X}{\mathsf{X}}

\newcommand{\R}{\mathbb{R}}
\newcommand{\E}{\mathbb{E}} 
\renewcommand{\P}{\mathbb{P}} 
\newcommand{\eps}{\varepsilon}
\newcommand{\hbeta}{\hat\beta}
\DeclareMathOperator*{\argmin}{argmin}

\DeclareMathOperator*{\supp}{supp}

\newcommand{\Tr}{\mathsf{trace}}

\newcommand{\prox}{\mathsf{prox}}

\begin{document}

\maketitle

\begin{abstract}

We consider first order expansions of convex penalized estimators in high-dimensional
regression problems with random designs. Our setting
includes linear regression and logistic regression as special cases.
For a given penalty
function $h$ and the corresponding penalized estimator
$\hbeta$, we construct a quantity $\eta$, the first order expansion of $\hbeta$,
such that the distance between $\hbeta$ and $\eta$
is an order of magnitude smaller than the estimation error $\|\hat{\beta} - \beta^*\|$. 
In this sense, the first order expansion $\eta$ can be thought of as a generalization
of influence functions from the mathematical statistics literature
to regularized estimators in high-dimensions.
Such first order expansion implies that the risk of $\hat{\beta}$ is asymptotically the same as the risk of $\eta$ which leads to a precise characterization of the MSE of $\hbeta$;
this characterization takes a particularly simple form for isotropic design.
Such first order expansion also leads to inference results based on $\hat{\beta}$. 
We provide sufficient conditions for the existence of such
first order expansion for three regularizers: the Lasso in its constrained form,
the lasso in its penalized form, and the Group-Lasso.
The results apply to general loss functions under some conditions and those
conditions are satisfied for the squared loss in linear regression
and for the logistic loss in the logistic model.
\end{abstract}

\paragraph{Introduction.}

We consider learning problems where one observes observations $(X_1,Y_1),...,(X_n,Y_n)$
with responses $Y_i$ and feature vectors $X_i\in\R^p$. The literature of the past
two decades has demonstrated the great success of regularized estimators
that are commonly defined as solutions to regularized optimization problems
of the form
\begin{equation}
    \textstyle
    \label{hbeta-intro}
    \hbeta = \argmin_{\beta\in\R^p}
    n^{-1} \sum_{i=1}^n \ell(Y_i,X_i^T\beta) + h(\beta),
\end{equation}
where $\ell(\cdot,\cdot)$ is referred to as the loss (e.g. squared loss, logistic loss)
and $h:\R^p\to \R$ is a regularization penalty (e.g. the $\ell_1$-norm for the Lasso, the $\ell_{2,1}$ norm for the Group-Lasso).
All tuning parameters are included in $h(\cdot)$.
The performance of such regularized estimators is measured in terms of prediction
error or in terms estimation error $\|\hbeta-\beta^*\|$
if the data comes from a model such as $Y=X\beta^* + \eps$ for some noise random
variable $\eps$ in linear regression
or $$\P(Y=1|X=x) = 1/(1+\exp(x^T\beta^*)) = 1- \P(Y=0|X=x)$$
in logistic regression, where $\beta^*$ is the unknown coefficient vector.
For instance, if $s=\|\beta^*\|_0$ is the sparsity of $\beta^*$ in the 
above model, and $(X_i,Y_i)_{i=1,...,n}$ are iid observations with the same distribution as $(X,Y)$,
both the Lasso in linear regression and the logistic Lasso
in logistic regression enjoy rate optimality:
$\|\hbeta-\beta^*\|^2 \le s\log(ep/s)/n$; see \cite{negahban2009unified,alquier2019estimation}
or the proof of \Cref{prop:RateLogisticLasso-mainpaper} in \Cref{sec:RSCLogistic} for self-contained proofs.
The latter estimation bound is optimal in a minimax sense and cannot be improved,
and the minimax rate $s\log(ep/s)/n$ represents the scale below which
uncertainty is unavoidable by information theoretic arguments, see for instance
\cite[Section 5]{rigollet2011exponential}.

We are interested in providing first order expansion of $\hbeta$ at scales negligible
compared to the minimax estimation rate, e.g. at scales negligible compared
to $s\log(ep/s)/n$ in the aforementioned sparsity contexts.
To be more precise, the results below will construct random first order expansion
$\eta$
such that $\eta$ is measurable w.r.t. a much smaller sigma algebra than
that generated by $(X_i,Y_i)_{i=1,...,n}$, and
\begin{equation}
    \label{approximation-intro}
    \|\eta-\hbeta\|_K^2 = o_p(1) \|\hbeta-\beta^*\|_K^2
    \quad
    \text{ for some norm $\|\cdot\|_K$ related to the problem at hand,}
\end{equation}
where $o_p(1)$ is a quantity that converges to 0 in probability.
In other words, we provide a first-order expansion of $\hat{\beta}$ similar to an influence function expansion, cf. \Cref{sec:influence}. This allows for understanding bias and standard deviation of $\hat{\beta}$ at a finer scale 
than simply showing that $\hbeta-\beta^*$ converge to zero at the minimax rate.
% While $\|\hbeta-\beta^*\|^2 \le s\log(ep/s)/n$ says that $\hbeta$ is within
% an $\ell_2$-ball of radius $(s\log(ep/s)/n)^{1/2}$ of the true coefficient $\beta^*$,
% the approximation $\eta$ that satisfies  
% ~\eqref{approximation-intro}
% provides a description of the regularized estimator $\hbeta$ at a much finer scale.
The present paper intends to answer the two questions below regarding such first order expansion.
\begin{enumerate}[label=\bfseries (Q\arabic*)]
    \item How to construct $\eta$ such that \eqref{approximation-intro} holds
        for a given convex regularized estimator
        such as \eqref{hbeta-intro}?\label{eq:Q1}
    \item How are such first order expansions useful in high-dimensional learning problems
        where convex regularized estimators \eqref{hbeta-intro} are commonly used?
        \label{eq:Q2}
\end{enumerate}
An expansion $\eta$ satisfying \eqref{approximation-intro} is interesting
in and by itself because it describes phenomena at a finer scale than
most of the literature in high-dimensional problems which focuses on
minimax prediction and estimation bounds.
More importantly, we will see in Section~\ref{sec:ExactRiskBound} that such first-order
expansions
lead to
exact identities for the loss of estimators, and in
Section~\ref{sec:Inference} that such first-order expansions
can be used for inference (i.e., uncertainty quantification) about the
unknown coefficient vector $\beta^*$.

\paragraph{Notation.} Throughout the paper, $\C,\C,\C,...$ denote positive absolute
constants and we write $a \lesssim b$ if $a\le C b$ for some absolute constant $C>0$.
The Euclidean norm in $\mathbb{R}^p$ or in $\R^n$ is denoted by $\|\cdot\|$. For any positive definite matrix $A$, we write $\|u\|_A = \|A^{1/2}u\|$ for the matrix square-root $A$.
For matrices, $\|\cdot\|_{op}$ and $\|\cdot\|_{F}$ denote the operator norm
and Frobenius norm.
For any real $a$, $a_+ = \max(0,a)$. 
If $S\subset \{1,...,p\}$, 
$v\in\R^p, M\in\R^{p\times p}$ 
then $v_S$ is the restriction $(v_j, j\in S)$ and $M_{S,S}$ is the square
submatrix of $M$ made of entries indexed in $S\times S$.

\section[Influence functions and Intuition]{Influence functions and Construction of $\eta$}
\label{sec:influence}
To answer~\ref{eq:Q1}, we start with a recap of unregularized estimators that correspond to $h(\cdot)\equiv 0$, when $p$ is fixed as $n\to+\infty$.
In this case, it is well-known that certain smoothness assumptions on the loss such as twice differentiability~\citep{2018arXiv180905172K,ichimura1993semiparametric} or stochastic equicontinuity~\citep{van2000asymptotic,van2002part} imply (for any norm, since
all norms are equivalent in $\R^p$ for fixed $p$):
\begin{equation}\label{eq:UnregRep}
\left\|\hat{\beta} - \beta^* - \sum_{i=1}^n \frac{\psi(X_i, Y_i)}{n}\right\| = o_p(1)\|\hat{\beta} - \beta^*\|\Leftrightarrow(1 + o_p(1))\sqrt{n}(\hat{\beta} - \beta^*) = \sum_{i=1}^n \frac{\psi(X_i,Y_i)}{\sqrt{n}},
\end{equation}
for some target $\beta^*$ and a mean zero function $\psi(\cdot, \cdot)$ sometimes referred to as the influence function. 
See~\citep[Theorem 3.1]{2018arXiv180905172K}, \citep[Page 52]{van2000asymptotic},~\citep[Theorem 6.17]{van2002part},~\citep[Lemma 5.4]{ichimura1993semiparametric} for details.
In this case we can take $\eta = \beta^* + \sum \psi(X_i, Y_i)/n$ in~\eqref{approximation-intro}.
This representation allows us to claim asymptotic unbiasedness and fluctuations of order $n^{-1/2}$ for $\hat{\beta}$ around $\beta^*$.
It also shows that estimator $\hat{\beta}$ behaves like an average and hence allows transfer of results (e.g., central limit theorems) for averages to study of $\hat{\beta}$ in terms of variance estimation, confidence intervals, hypothesis testing and bootstrap.

A general study of such representation for regularized problems is lacking in the literature. 
\cite{knight2000asymptotics} is the first work that analyzed linear regression Lasso when the number of covariates $p$ is fixed and does not change with the sample size $n$. 
In the more challenging regime where $p \ge n$,
Theorem 5.1 of~\cite{javanmard2015biasing} provides a first order expansion allowing for $p$ to diverge (almost exponentially) with $n$. 
In the present work, we simplify and present a unified derivation of such first order expansion result,
generalizing~\cite[Theorem 5.1]{javanmard2015biasing} beyond the squared loss,
beyond the $\ell_1$ penalty and beyond certain assumptions of \cite{javanmard2015biasing}
on $\E[X_i X_i^T]$.
The derivation of~\eqref{eq:UnregRep} can be motivated by defining
\begin{equation}\label{eq:EtaTildeDef}
\tilde{\eta} := \argmin_{\beta\in\mathbb{R}^p}\,\frac{1}{n}\sum_{i=1}^n \ell'(Y_i, X_i^{\top}\beta^*)X_i^{\top}(\beta - \beta^*) + \frac{1}{2n}\sum_{i=1}^n \ell''(Y_i, X_i^{\top}\beta^*)\{X_i^{\top}(\beta - \beta^*)\}^2 + h(\beta),
\end{equation}
with $h(\cdot)\equiv 0$. 
Here and throughtout $\ell'(y, u)$ and $\ell''(y, u)$ represent (first and second) partial derivatives of $\ell$ with respect to $u$.
The right hand side of~\eqref{eq:EtaTildeDef} (with $h(\cdot)\equiv 0$) is the quadratic approximation of $\sum_{i=1}^n \ell(Y_i, X_i^{\top}\beta)/n$ around $\beta = \beta^*$ (without the term independent of $\beta$). The final first order expansion $\eta$ is obtained by replacing the quadratic part of the approximation by its expectation as in the next display.
Following the intuitive construction of $\eta$ for the unregularized problem, we construct a first order expansion for the regularized problem 
as
% by defining 
% $\tilde{\eta}$ as in~\eqref{eq:EtaTildeDef},
% % \[
% % \tilde{\eta} := \argmin_{\beta\in\mathbb{R}^p}\,\frac{1}{n}\sum_{i=1}^n \ell'(Y_i, X_i^{\top}\beta^*)X_i^{\top}(\beta - \beta^*) + \frac{1}{2n}\sum_{i=1}^n \ell''(Y_i, X_i^{\top}\beta^*)\{X_i^{\top}(\beta - \beta^*)\}^2 + h(\beta).
% % \]
% and the final vector $\eta$ is obtained by replacing the quadratic term by its expectation. 
% To be precise, define
\begin{equation}\label{eq:FirstOrderApproxRegularized}
\begin{split}
\eta &:= 
\textstyle
\argmin_{\beta\in\mathbb{R}^p}\,n^{-1}\sum_{i=1}^n \ell'(Y_i, X_i^{\top}\beta^*)X_i^{\top}(\beta - \beta^*) + \frac{1}{2}(\beta - \beta^*)^{\top}K(\beta - \beta^*) + h(\beta)\\
&:=
\textstyle
\argmin_{\beta\in\mathbb{R}^p}\,\frac{1}{2}\left\|K^{1/2}\left(\beta - \beta^* - n^{-1}\sum_{i=1}^n K^{-1}X_i\ell'(Y_i, X_i^{\top}\beta^*)\right)\right\|^2 + h(\beta).
\end{split}
\end{equation}
where
$K := n^{-1}\sum_{i=1}^n \mathbb{E}\left[\ell''(Y_i, X_i^{\top}\beta^*)X_iX_i^{\top}\right].$
From this definition, we can write $\eta = \eta_K\left(\beta^* + n^{-1}\sum_{i=1}^n K^{-1}X_i\ell'(Y_i, X_i^{\top}\beta^*)\right),$
for a function $\eta_K(\cdot)$ (depending on $h(\cdot), K$).
Our main results prove under some mild assumptions that
\[
    \textstyle
\|\hat{\beta} - \eta_K(\beta^* + \frac{1}{n}\sum_{i=1}^n \ell'(Y_i, X_i^{\top}\beta^*)X_i)\|_K ~=~ o_p(1)\|\hat{\beta} - \beta^*\|_K.
\]
Comparing this with~\eqref{eq:UnregRep} we note that for the unregularized problem, $\eta_K(\beta) = \beta$ is the identity.
%In general, if $\eta_K(\cdot)$ is Lipschitz then we claim that $\hat{\beta}$
%has fluctuations (or standard deviation) of order $n^{-1/2}$ and allows for
%the study of bias of $\hat{\beta}$; see Section~\ref{sec:ExactRiskBound} for
%details.

\section{Main Results: Approximation Theorem}
\label{sec:main-approximation-theorem}

% {\color{red}Includes one main (deterministic) result for general loss with two parts: one for slow rate and one for fast rate. Discussion for squared loss and logistic loss. Also goes here control of quadratic and cubic processes.}

% Carrying this intuition with a general regularizer $h(\cdot)$, we consider the vector
% \begin{equation}
%     \label{eq:FirstOrderApproxRegularized}
% \eta := \argmin_{\beta\in\mathbb{R}^p}
% \,\frac{1}{n}\sum_{i=1}^n \ell'(Y_i, X_i^{\top}\beta^*)X_i^{\top}(\beta - \beta^*)
% + \frac{1}{2}(\beta - \beta^*)^{\top}K_n(\beta - \beta^*) 
% + h(\beta),
% \end{equation}
% where \pierre{Pierre: Since the article is already full of symbols, I think we should drop the subscript in $K_n$ and $\hbeta_n$}
% \[
%     \label{K_n}
%     K_n := \frac{1}{n}\sum_{i=1}^n \mathbb{E}[\ell''(Y_i, X_i^{\top}\beta^*)X_iX_i^{\top}].
% \]
% Throughout the article, the norm defined by $\|u\|_{K_n} = \|K_n^{1/2} u\|$ for $u\in\R^p$ will play a major role. 
We introduce the notion of Gaussian complexity for the following results. For any set $T\subset\mathbb{R}^p$ and a covariance matrix $\Sigma$, the Gaussian complexity of $T$ is given by
\begin{equation}\label{def:mean-width}
    \textstyle
\gamma(T, \Sigma) := \mathbb{E}\left[\sup_{u\in T:\,\|\Sigma^{1/2}u\| = 1}\,|g^{\top}\Sigma^{1/2}u|\right] = \mathbb{E}\left[\sup_{u\in T}\,\frac{|g^{\top}\Sigma^{1/2}u|}{\|\Sigma^{1/2}u\|}\right],
\end{equation}
where the expectation is with respect to the standard normal vector $g\sim N(0,I_p)$. We also need the notion of $L$-subGaussianity. A random vector $X$ is said to be $L$-subGaussian with respect to a (positive definite) matrix $\Sigma$ if 
\begin{equation}\label{assum:subgaussian}
    \forall u\in\R^p, \quad \E[\exp(u^T X)] \le \exp(L^2\|u\|_\Sigma^2/2)
    \qquad
    \text{ where }
    \|u\|_\Sigma = \|\Sigma^{1/2} u\|.
\end{equation}
This implies
$\sup_u\mathbb{P}(|u^{\top}X| \ge t\|u\|_{\Sigma}) \le 2\exp(-{t^2}/{(2L^2)})$.
% for all $u$.
Recall that the scaled norm $\|\cdot\|_{K}$ is defined by
$\|u\|_{K}^2 = n^{-1}\sum_{i=1}^n \E[ \ell''(Y_i,X_i^\top\beta^*)(X_i^\top u)^2].
$
Consider the following assumptions:
\begin{enumerate}[label=\bfseries (A\arabic*)]
\item\label{eq:LossAssump} There exists constants $0 \le B, B_2, B_3 < \infty$ such that the loss satisfies $\forall u_1,u_2\in\R, \forall y$,
\begin{equation}\label{eq:ProcessControlAssumption}
    \frac{|\ell''(y,u_1)-\ell''(y,u_2)|}{|u_1 - u_2|} \le B,\qquad
|\ell''(y, u_1)| \le B_2,
\qquad \sup_{u\in \mathbb{R}^p}\frac{\|\Sigma^{1/2}u\|^2}{\|K^{1/2}u\|^2} \le B_3 .
\end{equation}
% \arun{Changed to T to $\mathbb{R}^p$ to avoid using T before defining what it is. The assumption will still hold for logistic.}
\item\label{eq:SubGaussianAssump} The observations $(X_1, Y_1), \ldots, (X_n, Y_n)$ are iid.
    Further $X_1, \ldots, X_n$ are mean zero and $L$-subGaussian with respect to their covariance $\Sigma$, i.e., \eqref{assum:subgaussian} holds.
\end{enumerate}
Note that $L$ in \ref{eq:SubGaussianAssump} is necessarily no smaller than one, i.e.,
$L\ge1$.
Define the error
\begin{equation}\label{eq:EstimationError}
\mathcal{E} := \|\hat{\beta} - \beta^*\|_K + \|\eta - \beta^*\|_K
\qquad\text{ where \qquad} \|\cdot\|_K
\text{ is the norm } \|u\|_K = \|K^{1/2} u\|.
\end{equation}
The quantity $\mathcal{E}$ quantifies the error made by
$\hbeta$ and $\eta$ in estimating $\beta^*$ with respect to the norm $\|\cdot\|_K$.
Bounds on $\|\hbeta - \beta^*\|_K$ and $\|\eta - \beta^*\|_K$ follow from the existing literature; see~\cite{negahban2009unified} or~\Cref{prop:RateLogisticLasso-mainpaper}
and its proof in \Cref{sec:RSCLogistic}.
\begin{theorem}
    \label{thm:main}
    Let $r_n := n^{-1/2}\gamma(T,\Sigma)$ and assume that $r_n\le 1$.
    Further assume~\ref{eq:LossAssump} and~\ref{eq:SubGaussianAssump} hold true.
Then with probability at least $1-2 e^{-\C n r_n^2} - 2e^{-\C\log n}$ we have the following: 
\begin{enumerate}
    \item If $\{\hat{\beta} - \beta^*,\eta - \beta^*\}\subseteq T$ then 
    $\|\hbeta - \eta\|_K \lesssim  LB_2 B_3 r_n^{1/2}\mathcal{E} + B^{1/2} (B_3L)^{3/2}  ( 1 + r_n^3 \sqrt n)\mathcal{E}^{3/2}$.
    \item If $\{\hbeta-\eta,\hbeta - \beta^*,\eta - \beta^*\}\subseteq T$ then
    $\|\hbeta - \eta\|_K \lesssim  B_2 B_3L^2 r_n \mathcal{E} + B B_3^{3/2}L^3 (1+r_n^3 \sqrt n) \mathcal{E}^2$.
\end{enumerate}
\end{theorem}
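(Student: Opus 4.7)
My plan is to derive a deterministic basic inequality bounding $\tfrac12\|\hbeta-\eta\|_K^2$ by a centered empirical form involving $K-\hat H_0$, where $\hat H_0:=n^{-1}\sum_i\ell''(Y_i,X_i^\top\beta^*)X_iX_i^\top$ is the empirical Hessian of $f(\beta):=n^{-1}\sum_i\ell(Y_i,X_i^\top\beta)$ at $\beta^*$, and then control that form with empirical-process estimates indexed by the Gaussian complexity of $T$.

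For the deterministic half, the quadratic surrogate $g(\beta)=\nabla f(\beta^*)^\top(\beta-\beta^*)+\tfrac12\|\beta-\beta^*\|_K^2$ defining $\eta$ is $1$-strongly convex in $\|\cdot\|_K$, hence $(g+h)(\hbeta)\ge(g+h)(\eta)+\tfrac12\|\hbeta-\eta\|_K^2$. Combining with the optimality $(f+h)(\hbeta)\le(f+h)(\eta)$ gives $\tfrac12\|\hbeta-\eta\|_K^2\le(g-f)(\hbeta)-(g-f)(\eta)$, and a second-order Taylor expansion of $f$ about $\beta^*$ rewrites the right-hand side as a difference of $(K-\hat H_0)$-quadratic forms at $\hbeta-\beta^*$ and $\eta-\beta^*$, plus cubic Taylor remainders bounded by $(B/6)|X_i^\top(\cdot-\beta^*)|^3$ thanks to the Lipschitz bound on $\ell''$ in (A1). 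This is what I would use for Case~1. For Case~2 I would instead combine subdifferential monotonicity $\langle\nabla g(\eta)-\nabla f(\hbeta),\hbeta-\eta\rangle\ge 0$ with the mean-value identity $\nabla f(\hbeta)-\nabla f(\beta^*)=\tilde H(\hbeta-\beta^*)$ for an averaged empirical Hessian $\tilde H$ on $[\beta^*,\hbeta]$; rearranging delivers the sharper bilinear inequality $\|\hbeta-\eta\|_K^2\le\langle(K-\tilde H)(\hbeta-\beta^*),\hbeta-\eta\rangle$, in which both arguments now lie in $T$ by hypothesis.

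For the stochastic half, the key estimates, valid on an event of probability at least $1-2e^{-Cnr_n^2}-2e^{-C\log n}$ under (A1)--(A2), are a quadratic-form deviation $\sup_{u\in T}|u^\top(K-\hat H_0)u|\lesssim L^2B_2^2B_3^2\,r_n\,\|u\|_K^2$ (obtained by Mendelson/generic-chaining for quadratic forms of $L$-subgaussian vectors with bounded multiplier $\ell''\le B_2$ and the norm equivalence $\Sigma\preceq B_3 K$) together with its bilinear polarization $\sup_{u,v\in T}|\langle(K-\hat H_0)u,v\rangle|\lesssim L^2B_2B_3\,r_n\,\|u\|_K\|v\|_K$, obtained by polarizing on $T-T$ (whose Gaussian complexity is at most $2\gamma(T,\Sigma)$) and by a multiplicative Bernstein inequality for the lighter-tailed product $\ell''(Y_i,X_i^\top\beta^*)(X_i^\top u)(X_i^\top v)$. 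The cubic Taylor remainders are controlled by a uniform third-order moment bound
\[
n^{-1}\sum_i|X_i^\top u|^2|X_i^\top v|\ \lesssim\ L^3 B_3^{3/2}\,\|u\|_K^2\|v\|_K\,(1+r_n^3\sqrt n),
\]
proved by pairing the subgaussian maximal inequality $\max_i|X_i^\top u|\lesssim L\|u\|_\Sigma(\sqrt{\log n}+r_n\sqrt n)$, uniform over $u\in T$, with the empirical second moment $\|v\|_{\hat\Sigma}\lesssim L\sqrt{B_3}\|v\|_K$.

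Inserting these bounds into the basic inequalities finishes the proof. In Case~1 the quadratic-form term enters the right-hand side as a product of two factors of scale $\mathcal E$; extracting $\|\hbeta-\eta\|_K$ requires a final square root, which accounts for the exponent $r_n^{1/2}$ in the leading term and for the $\mathcal E^{3/2}$ scaling of the remainder. In Case~2 the bilinear bound already contains one factor $\|\hbeta-\eta\|_K$, which cancels from both sides without any square root, preserving the sharper $r_n$ rate and giving $\mathcal E^2$ in the remainder. The main obstacle is the stochastic step: establishing the quadratic and bilinear concentration at the stated Gaussian-complexity rate (with no ambient-dimension factor) and the sharp uniform cubic moment control, while tracking the explicit dependence on $L,B,B_2,B_3$, is where chaining arguments tailored to $T$, the equivalence $\Sigma\preceq B_3 K$, and multiplicative Bernstein-type bounds for products of subgaussian variables must be combined carefully.
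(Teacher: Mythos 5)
Your deterministic reduction is sound and essentially the paper's: for Case~1 the strong-convexity-plus-optimality argument and the second-order Taylor expansion with the Lipschitz bound $|a_i|\le B|X_i^\top(\cdot-\beta^*)|$ reproduce Theorem~\ref{thm:Deterministic}(i), and your Case~2 variant via monotonicity of $\partial h$ and the averaged Hessian $\tilde H$ is a legitimate alternative to the paper's route (which compares objective values and then factors the difference of quadratic forms as $(\eta-\hbeta)^\top(\hat K-K)(\eta-\beta^*+\hbeta-\beta^*)$); both land on the same bilinear structure requiring $\hbeta-\eta\in T$.

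The gaps are in the stochastic half. First, your bilinear bound is obtained ``by polarizing on $T-T$ (whose Gaussian complexity is at most $2\gamma(T,\Sigma)$)'' --- but for the \emph{normalized} complexity \eqref{def:mean-width} this inclusion costs far more than a factor $2$. For a cone such as $T_{\texttt{lasso}}(k)$, differences $u-v$ with $u,v\in T$ can point in essentially arbitrary directions after normalization (take $u=e_1+\delta z$, $v=e_1$ with $\delta$ small and $z$ dense: both lie in $T_{\texttt{lasso}}(4)$ yet $u-v=\delta z$ is dense), so $\gamma(T-T,\Sigma)$ can be of order $\sqrt p$ while $\gamma(T,\Sigma)\asymp\sqrt{k\log p}$. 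Polarization therefore destroys the rate; you need a genuine product/bilinear process bound over $T\times T$ with the two arguments chained separately, which is exactly what the paper gets from Theorem~1.13 of Mendelson (applied to the classes $F$ and $H$ in the proof of Proposition~\ref{prop:ControlProc}). Second, your cubic control by pairing $\max_i|X_i^\top v|\lesssim L\|v\|_\Sigma(\sqrt{\log n}+r_n\sqrt n)$ with the empirical second moment of $u$ yields a factor $\sqrt{\log n}+r_n\sqrt n$, not the claimed $1+r_n^3\sqrt n$; since $r_n\le1$ one has $r_n^3\sqrt n\ll r_n\sqrt n$ in every regime of interest, so this decoupled bound is materially weaker and would not give the theorem as stated (e.g.\ for $\gamma(T,\Sigma)\asymp\sqrt{s\log p}$ you would pick up an extra $\sqrt{s\log p}$ inside the remainder). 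The paper instead invokes a dedicated bound on $\sup_{f\in F}n^{-1}\sum_i|f(X_i)|^3$ (Eq.~(3.9) of Mendelson~2010), which gives $(\gamma(F)+n^{1/3}\,\mathrm{diam}(F))^3/n\lesssim 1+r_n^3\sqrt n$. Both fixes are available off the shelf, but as written your plan does not establish the stated rates.
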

The set $T$ mentioned in Theorem~\ref{thm:main}(1)
are available in the literature for many convex 
penalties. In the following, we
will find this for
the constrained Lasso, penalized Lasso, and Group-Lasso
(with non-overlapping groups) under sharp conditions.
We refer to~\cite{bellec2016slope} for slope penalty,
and~\citet[Lemma 1]{negahban2009unified} and
~\citet[Def. 4.4 and Theorem 4.1]{van2014weakly}
where set $T$ is
presented for a general class of penalty functions 
including nuclear norm, Group-Lasso (with
overlapping groups). %as well as structured
% sparsity norm. 

Proofs of \Cref{thm:main} and all following results are given in the supplement.
An outline of \Cref{thm:main} is given in \Cref{sec:proof-sketch}.
Although Theorem~\ref{thm:main} is stated under assumption~\ref{eq:SubGaussianAssump}, we present a deterministic version of the result (in Section~\ref{sec:proof-sketch}) that replaces $r_n$ by suprema of different stochastic processes.

\paragraph{Squared loss in the linear model.} %Consider the linear model where one observes
%$n$ iid observations 
Consider $\ell(y,u) = (y - u)^2/2$ and $n$ iid observations
\begin{equation}
    \label{eq:linear-model}
    \textstyle
    Y_i = X_i^T\beta^* + \eps_i, \;
    \text{and $X_i$ is independent of $\varepsilon_i$ for}\; i=1,\ldots,n,%with the square loss}\quad \ell(y,u) = (y-u)^2/2. 
\end{equation}
Then we have $K= \Sigma= \E[X_1X_1^T]$ and the second derivative $\ell''$ is
constant.
Hence condition \eqref{eq:ProcessControlAssumption} is satisfied with $B=0$
and $B_2=B_3 = 1$. The conclusions of the Theorem~\ref{thm:main} can be rewritten as
\begin{align}
    \{\hbeta - \beta^*,\eta - \beta^*\}\subseteq T 
    \quad&\Rightarrow\quad
    \|\hbeta - \eta\|_K ~\lesssim~ Lr_n^{1/2}\mathcal{E}.
    \label{eq:square-loss-slow-rate}
    \\
    \{\hbeta-\eta,\hbeta - \beta^*,\eta - \beta^*\}\subseteq T
    \quad&\Rightarrow\quad
    \|\hbeta - \eta\|_K ~\lesssim~ L^2r_n \mathcal{E},
    \label{eq:square-loss-fast-rate}
\end{align}
where $\mathcal{E} = \|\Sigma^{1/2}(\hbeta-\beta^*)\| + \|\Sigma^{1/2}(\eta-\beta^*)\|$.
Since $r_n\le1$ (and typically $r_n\to 0$ while $L$ stays bounded, as we will see in the examples below),
the inequality in \eqref{eq:square-loss-fast-rate} is stronger
than the inequality in \eqref{eq:square-loss-slow-rate}.
In the linear model, we thus refer to inequality \eqref{eq:square-loss-slow-rate}
as the ``slow rate'' inequality, and to \eqref{eq:square-loss-fast-rate} 
as the ``fast rate'' one.
The set $T$ encodes
the low-dimensional structure and characterizes the rate $r_n$ through the Gaussian
complexity $\gamma(T,\Sigma)$.
The fast rate inequality is granted provided that $T$
contains the difference $(\eta-\hbeta)$ additionally to the error vectors 
$\{\hbeta - \beta^*,\eta - \beta^*\}$. Conditions that ensure the fast rate inequality will be
made explicit in \Cref{sec:lasso-penalized,subsec:GL} for the Lasso and Group-Lasso.

\paragraph{Logistic loss in the logistic model.} The following proposition shows that \eqref{eq:ProcessControlAssumption} is again satisfied.
\begin{restatable}{proposition}{propositionLogisticSetting}
    \label{prop:logistic-setting}
    Consider the logistic loss $\ell(y,u) = yu - \log(1+e^{u})$ for $y\in\{0,1\}, u\in\R$.
    Assume that $(X_i,Y_i)_{i=1,...,n}$ are iid satisfying the logistic regression model
    $$
    \P(Y_i=1|X_i) = 1-\P(Y_i=0|X_i)= 1/(1+\exp({X_i{}^\top\beta^*})),$$
    for some $\beta^*\in\R^p$ with $\|\Sigma^{1/2}\beta^*\| \le 1$.\footnote{The constant $1$ can be replaced by another absolute constant; this will only change $B_3$ to a different constant.}
    Assume~\ref{eq:SubGaussianAssump} holds.
    Then~\eqref{eq:ProcessControlAssumption} holds with $B=1/(6\sqrt 3)$, $B_2
    = 1$ and an absolute constant $B_3> 0$.

    % If $\|\Sigma^{1/2}\beta^*\| \le C$ for a different absolute constant $C>0$,
    % \eqref{eq:ProcessControlAssumption}
    % still holds with the same values for $\{B,B_2\}$ and $B_3$ being a different absolute constant.
\end{restatable}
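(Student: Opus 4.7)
The plan is to verify the three bounds in \eqref{eq:ProcessControlAssumption} in turn; the first two are direct calculus, while the bound on $B_3$ is the only genuinely nontrivial step and uses the subGaussian assumption together with the normalization $\|\Sigma^{1/2}\beta^*\|\le 1$.

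Write $\sigma(u) = 1/(1+e^{-u})$. Differentiating the logistic loss twice in $u$ yields $|\ell''(y,u)| = \sigma(u)(1-\sigma(u)) \le 1/4 \le 1$ uniformly in $y$, giving $B_2 = 1$. For $B$, since $\ell''$ is independent of $y$, the Lipschitz constant of $\ell''(y,\cdot)$ equals $\sup_{p\in(0,1)} p(1-p)|1-2p|$ after substituting $p = \sigma(u)$. By symmetry about $p=1/2$ we may restrict to $p\ge 1/2$ and maximize $p(1-p)(2p-1)$; the critical-point equation $6p^2 - 6p + 1 = 0$ has the root $p^* = (3+\sqrt{3})/6 \in [1/2,1]$, yielding $p^*(1-p^*) = 1/6$ and $2p^*-1 = 1/\sqrt{3}$, so the maximum value is $1/(6\sqrt{3}) = B$.

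The bound on $B_3$ reduces to showing $K \succeq c_L \Sigma$ for some $c_L > 0$ depending only on $L$, where $K = \E[\sigma(X^T\beta^*)(1-\sigma(X^T\beta^*)) X X^T]$. For any $t>0$ and direction $u\in\R^p$, the function $v\mapsto \sigma(v)(1-\sigma(v))$ is even and decreasing on $[0,\infty)$, so
\begin{equation*}
u^T K u \;\ge\; \sigma(t)(1-\sigma(t))\, \E\bigl[(X^T u)^2 \mathbf{1}\{|X^T\beta^*| \le t\}\bigr].
\end{equation*}
Writing $\mathbf{1}\{|X^T\beta^*|\le t\} = 1 - \mathbf{1}\{|X^T\beta^*|>t\}$, I bound the tail contribution by Cauchy-Schwarz:
\begin{equation*}
\E\bigl[(X^T u)^2 \mathbf{1}\{|X^T\beta^*|>t\}\bigr] \;\le\; \sqrt{\E[(X^T u)^4]\,\P(|X^T\beta^*|>t)}.
\end{equation*}
$L$-subGaussianity of $X$ gives $\E[(X^T u)^4] \lesssim L^4 \|u\|_\Sigma^4$, and $\|\Sigma^{1/2}\beta^*\|\le 1$ gives $\P(|X^T\beta^*|>t) \le 2 e^{-t^2/(2L^2)}$. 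Choosing $t$ of order $L\sqrt{\log L}$ makes the tail contribution at most $\tfrac{1}{2}\|u\|_\Sigma^2$, hence $\E[(X^T u)^2 \mathbf{1}\{|X^T\beta^*|\le t\}]\ge \tfrac{1}{2}\|u\|_\Sigma^2$. Since $\sigma(t)(1-\sigma(t)) \ge e^{-t}/4$, combining yields $u^T K u \ge c_L \|u\|_\Sigma^2$ and therefore $B_3 \le 1/c_L$.

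The main obstacle is precisely this $B_3$ step: one must convert a tail statement about the scalar $X^T\beta^*$ into a pointwise lower bound on the matrix $K$ that holds uniformly in the direction $u$. The Cauchy--Schwarz step against the fourth moment is what decouples the $\beta^*$-dependent tail probability from the quadratic form $(X^T u)^2$, leaving a residual that depends on $u$ only through $\|u\|_\Sigma^2$; the remaining factor $\sigma(t)(1-\sigma(t))$ is a constant depending only on $L$.
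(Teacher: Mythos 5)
Your proposal is correct and follows essentially the same route as the paper: the paper also disposes of $B$ and $B_2$ by direct calculus on $\sigma(1-\sigma)$ and its derivative, and proves the $B_3$ bound (its Proposition on the eigenvalue condition) via exactly your truncation $u^TKu \ge \alpha(\tau)\,\E[(X^Tu)^2\mathbf{1}\{|X^T\beta^*|\le\tau\}]$, the same Cauchy--Schwarz decoupling against $\E[(X^Tu)^4]^{1/2}$, and the same choice $\tau \asymp L\sqrt{\log L}\,\|\Sigma^{1/2}\beta^*\|$ making the tail contribution at most $\tfrac12\|u\|_\Sigma^2$. The only (harmless) difference is cosmetic: the paper packages the lower curvature as $\alpha(\tau)=\inf_{|v|\le\tau}\ell''(y,v)$ for a general loss before specializing to logistic.
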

% Let the setting and assumptions of \Cref{prop:logistic-setting} be fulfilled in
% the rest of this section.
In this logistic model, the conclusions of \Cref{thm:main} present an extra term
compared to the linear model with squared loss because
the Lipschitz constant $B$ in \eqref{eq:ProcessControlAssumption} is non-zero:
\Cref{thm:main} reads that with high probability% at least $1-2e^{-\C n r_n^2}-2/n^{\C}$,
\begin{align}
    \{\hbeta -\beta^*,\eta - \beta^*\}\subset T 
    \quad&\Rightarrow\quad
    \|\hbeta - \eta\|_K \lesssim Lr_n^{1/2}\mathcal{E} +  B^{1/2}L^{3/2}( 1 + r_n^3 \sqrt n)\mathcal{E}^{3/2},
    \label{eq:slow-rate-logistic}\\
    \{\hbeta-\eta,\hbeta - \beta^*,\eta - \beta^*\}\subset T
    \quad&\Rightarrow\quad
    \|\hbeta - \eta\|_K \lesssim L^2r_n \mathcal{E} + BL^3(1+r_n^3 \sqrt n) \mathcal{E}^2.\label{eq:fast-rate-logistic}
\end{align}
Similar to the case of squared loss, inequality~\eqref{eq:fast-rate-logistic} is stronger than inequality~\eqref{eq:slow-rate-logistic} when $\hbeta - \eta$ belongs in $T$
additionally to $\{\hbeta-\beta^*,\eta-\beta^*\}\subset T$.

For the squared loss, \Cref{table:squared} in the supplement
summarizes the tuning parameters,
minimax rates for $\|\hat\beta-\beta^*\|_\Sigma$, Gaussian width bounds, and
upper bounds  \eqref{eq:square-loss-slow-rate}-\eqref{eq:square-loss-fast-rate}
for the Lasso and Group-Lasso.
\Cref{table:logistic} summarizes implications of \eqref{eq:slow-rate-logistic}-\eqref{eq:fast-rate-logistic} for logistic Lasso and logistic Group-Lasso.

\section[Low dimensionality of T]{What is the low-dimensional set $T$? Application to Lasso and Group-Lasso}
\label{sec:4-examples-of-cones-T}
We now provide applications of the above result to three different penalty functions
commonly used in high-dimensional settings. Throughout this section,
for any cone $T\subseteq\mathbb{R}^p$,
let $\phi(T)$ be the smallest singular value of $\Sigma^{1/2}$
restricted
to $T$,  i.e., 
$\phi(T) = \min_{u\in T:\|u\|=1} \|\Sigma^{1/2}u\|$.
Further consider
% \vspace{-0.1in}
\begin{enumerate}[label=\bfseries (N\arabic*)]
\item The features are normalized such that $\Sigma_{jj} \le 1$ for all $1\le j\le p$.\label{eq:Normalized}
\end{enumerate}

\subsection{Constrained Lasso}
Let $R>0$ be a fixed parameter.
Our first example studies the constrained Lasso penalty \cite{tibshirani1996regression}
\begin{equation}
    \label{eq:penalty-constrained-lasso}
    h(\beta) = +\infty \quad \text{ if } \|\beta\|_1 > R \qquad \text{ and } 
    \qquad h(\beta)=0  \quad \text { if } \|\beta\|_1\le R, 
\end{equation}
i.e., $h$ is the convex indicator function
of the $\ell_1$-ball of radius $R>0$.
Applying the above result requires two ingredients: proving that the error vectors
$\{\hbeta-\beta^*,\eta-\beta^*\}$ belong to some set $T$ with high probability,
and proving that $r_n=n^{-1/2}\gamma(T,\Sigma)$ is small. Define for any real $k\ge1$,
\begin{equation}
    \label{T_lasso}
    T_{\texttt{lasso}}(k) := \{u\in\mathbb{R}^p:\,\|u\|_1 \le \sqrt{k}\|u\|\}.
\end{equation}
The parameter $k$ above will typically be a constant times $s=\|\beta^*\|_0$,
the sparsity of $\beta^*$.
If $R=\|\beta^*\|_1$, then
the triangle inequality reveals that the error vectors of $\hbeta$ and $\eta$ satisfy
\begin{equation}
    \label{set-T-constrained-lasso}
    \{ \hbeta-\beta^*,\eta-\beta^*\}\subseteq T := \{u\in\R^p: \|u_{S^c}\|_1 \le \|u_S\|_1\},
\end{equation}
where $S=\{j=1,...,p: \beta^*_j \ne 0\}$ is the support of the true $\beta^*$
and $u_S$ is the restriction of $u$ to $S$.
By the Cauchy-Schwarz inequality $\|u_S\|_1\le\sqrt{s}\|u_S\|_2$, thus $T$ in \eqref{set-T-constrained-lasso}
satisfies $T \subset T_{\texttt{lasso}}(4s)$.

\begin{restatable}{lemma}{lemmaUpperBoundConeTLasso}
    \label{lemma:upper-bound-cone-T-lasso}
    If~\ref{eq:Normalized} holds and $k\ge 1$,
    then we have
    $\gamma(T,\Sigma) \lesssim \phi(T)^{-1} \sqrt{k\log(2p/k)}$ for any cone $T\subset T_{\texttt{lasso}}(k)$
    where $T_{\texttt{lasso}}(k)$ is defined in \eqref{T_lasso}.
\end{restatable}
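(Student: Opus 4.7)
My approach is to peel off $\phi(T)^{-1}$ first, and then reduce the problem to bounding the Gaussian width of the capped $\ell_1$-ball $\{u:\|u\|_1\le \sqrt k,\ \|u\|\le 1\}$, for which a sparse/tail decomposition of the Gaussian vector yields the sharp $\log(2p/k)$ factor.

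For every $u\in T\setminus\{0\}$ the definition of $\phi(T)$ yields $\|\Sigma^{1/2}u\|\ge \phi(T)\|u\|$. Since $T$ is a cone, rescaling the supremum in the definition of $\gamma(T,\Sigma)$ gives
\[
\gamma(T,\Sigma)\;=\;\mathbb{E}\Big[\sup_{u\in T\setminus\{0\}} \tfrac{|g^\top \Sigma^{1/2}u|}{\|\Sigma^{1/2}u\|}\Big]\;\le\;\phi(T)^{-1}\,\mathbb{E}\Big[\sup_{u\in T,\,\|u\|\le 1}|g^\top \Sigma^{1/2}u|\Big].
\]
Setting $\tilde g:=\Sigma^{1/2}g\sim N(0,\Sigma)$, each marginal $\tilde g_j$ has variance $\Sigma_{jj}\le 1$ by \ref{eq:Normalized}. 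Because $T\subset T_{\texttt{lasso}}(k)$, every admissible $u$ satisfies $\|u\|_1\le\sqrt k\,\|u\|\le\sqrt k$, so it suffices to bound $\mathbb{E}\big[\sup_{\|u\|\le 1,\,\|u\|_1\le\sqrt k}|\tilde g^\top u|\big]$.

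The key step is to decompose the inner product along a good random index set. Let $S$ be the set of the $k$ coordinates where $|\tilde g_j|$ is largest, and write $\tilde g^\top u=\tilde g_S^\top u_S+\tilde g_{S^c}^\top u_{S^c}$. Cauchy--Schwarz controls the first term by $\|\tilde g_S\|\|u\|\le\|\tilde g_S\|$, and H\"older gives $|\tilde g_{S^c}^\top u_{S^c}|\le\|\tilde g_{S^c}\|_\infty\|u\|_1\le\sqrt k\,|\tilde g|_{(k+1)}$, where $|\tilde g|_{(j)}$ denotes the $j$-th largest order statistic of $(|\tilde g_1|,\dots,|\tilde g_p|)$. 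The supremum over $u$ is thus dominated by the $u$-free quantity $\|\tilde g_S\|+\sqrt k\,|\tilde g|_{(k+1)}$.

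Finally, I control the expected order statistics via the binomial union bound $\mathbb{P}(|\tilde g|_{(j)}>t)\le\binom{p}{j}(2e^{-t^2/2})^j\le(2ep/j)^j e^{-jt^2/2}$, which integrates to $\mathbb{E}[|\tilde g|_{(j)}^2]\lesssim\log(ep/j)$. Summing and using $\prod_{j=1}^k j\ge(k/e)^k$,
\[
\mathbb{E}[\|\tilde g_S\|^2]\;=\;\mathbb{E}\Big[\sum_{j=1}^k|\tilde g|_{(j)}^2\Big]\;\lesssim\;\sum_{j=1}^k\log(ep/j)\;\lesssim\;k\log(2p/k),
\]
so Jensen gives $\mathbb{E}[\|\tilde g_S\|]\lesssim\sqrt{k\log(2p/k)}$, while $\mathbb{E}[|\tilde g|_{(k+1)}]\lesssim\sqrt{\log(2p/k)}$. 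Combining with the first display completes the bound. The one delicate point is precisely this last accounting: a crude union bound $|\tilde g^\top u|\le\|\tilde g\|_\infty\|u\|_1$ would cost a $\sqrt{\log p}$ factor instead of $\sqrt{\log(p/k)}$, and only the top-$k$/tail split recovers the correct scaling encoded in $\log\binom{p}{k}\asymp k\log(ep/k)$.
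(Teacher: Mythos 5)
Your overall route is a legitimate alternative to the paper's: the paper obtains this lemma as the $d=1$, $M=p$ case of its Group-Lasso width bound (\Cref{lemma:GroupLassoBoundGamma}), whose engine is the soft-threshold decomposition $u^\top\tilde g\le\sum_j|u_j|\,(|\tilde g_j|-\mu-1)_+ +(\mu+1)\|u\|_1$ followed by Cauchy--Schwarz and the pointwise bound $\E[(|\tilde g_j|-\mu-1)_+^2]\le 2e^{-\mu^2/2}$, whereas you split on the top-$k$ coordinates of $|\tilde g|$. Your peeling of $\phi(T)^{-1}$, the reduction to the capped $\ell_1$-ball, and the decomposition $|\tilde g^\top u|\le\|\tilde g_S\|+\sqrt k\,|\tilde g|_{(k+1)}$ are all correct.

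The gap is in your tail bound for the order statistics. The inequality $\P(|\tilde g|_{(j)}>t)\le\binom pj(2e^{-t^2/2})^j$ requires $\P\bigl(\bigcap_{i\in A}\{|\tilde g_i|>t\}\bigr)\le\prod_{i\in A}\P(|\tilde g_i|>t)$, i.e., it implicitly treats the coordinates of $\tilde g=\Sigma^{1/2}g$ as independent. They are not: the lemma only assumes $\Sigma_{jj}\le 1$, and in the extreme equicorrelated case where all coordinates of $\tilde g$ equal a single $N(0,1)$ variable $Z$, one has $\P(|\tilde g|_{(j)}>t)=\P(|Z|>t)\asymp e^{-t^2/2}/t$, which for large $t$ and $j\ge 2$ vastly exceeds $\binom pj(2e^{-t^2/2})^j$; the step as written is false. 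It is repairable using only marginal tails: since $\{|\tilde g|_{(j)}>t\}=\{\#\{i:|\tilde g_i|>t\}\ge j\}$, Markov's inequality applied to the count of exceedances gives $\P(|\tilde g|_{(j)}>t)\le j^{-1}\sum_{i=1}^p\P(|\tilde g_i|>t)\le 2pj^{-1}e^{-t^2/2}$, and integrating from $t_0=\sqrt{2\log(2p/j)}$ yields $\E[|\tilde g|_{(j)}^2]\le 2\log(2p/j)+2$. This recovers both $\E[\|\tilde g_S\|^2]=\sum_{j\le k}\E[|\tilde g|_{(j)}^2]\lesssim k\log(2p/k)$ and $\E[|\tilde g|_{(k+1)}]\lesssim\sqrt{\log(2p/k)}$, so the rest of your argument goes through. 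Note that the weaker but correct union bound $\P(|\tilde g|_{(j)}>t)\le\binom pj\,2e^{-t^2/2}$ would not suffice (it loses a factor of $j$ in $\E[|\tilde g|_{(j)}^2]$ and hence a factor of $k$ in $\E[\|\tilde g_S\|^2]$), so the repair genuinely matters; the paper's soft-threshold argument sidesteps the issue entirely because it only ever takes expectations of a sum over coordinates.
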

% The inclusion in \eqref{set-T-constrained-lasso} holds almost surely.
Hence under~\ref{eq:Normalized} and by setting $k=4s$ and
$r_n=\phi(T)^{-1} \sqrt{s\log(ep/s)/n}$ we have in the linear model with squared loss
that, with high probability,
\begin{equation}
    \label{eq:constrained-lasso-upper}
    \|\Sigma^{1/2}(\eta-\hbeta)\|\lesssim L\phi(T)^{-1/2} (s\log(ep/s)/n)^{1/4}(\|\Sigma^{1/2}(\hbeta - \beta^*)\| + \|\Sigma^{1/2}(\eta - \beta^*)\|)
\end{equation}
and we have established that $\eta$ is a first order expansion of $\hbeta$
with respect to the norm $\|\cdot\|_\Sigma$ if $s\log(ep/s)/n\to 0$.
It is informative to study the order of magnitude of the right hand 
side in \eqref{eq:constrained-lasso-upper}.
For that purpose, the following Lemma gives explicit bounds
on $\|\Sigma^{1/2}(\hbeta - \beta^*)\|$ and $\|\Sigma^{1/2}(\eta - \beta^*)\|$.

\begin{restatable}{lemma}{lemmaRiskConstrainedLasso}
    \label{lemma:risk-constrained-lasso}
    Consider the linear model with squared loss \eqref{eq:linear-model}
    and assume~\ref{eq:SubGaussianAssump}.
    Let $\hbeta,\eta$ in \eqref{hbeta-intro} and \eqref{eq:FirstOrderApproxRegularized}
    with penalty \eqref{eq:penalty-constrained-lasso}.
    Then if $R = \|\beta^*\|_1$, 
    we have
    with probability at least $1-2e^{-n r_n^2 }$,
    \begin{equation}
    \|\Sigma^{1/2}(\eta - \beta^*)\| ~\lesssim~ L\sigma^* r_n, \quad\mbox{and}\quad
        \|\Sigma^{1/2}(\hbeta - \beta^*)\| ~\lesssim~ L\sigma^* r_n(1-\C L^2 r_n)^{-1},
        \label{risk-constrained-lasso}
    \end{equation}
    where $r_n=\phi(T)^{-1}\sqrt{s\log(ep/s)/n}$ and $(\sigma^*)^2 = (\eps_1^2+...+\eps_n^2)/n.$
\end{restatable}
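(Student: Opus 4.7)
The plan is to run a standard basic-inequality argument for each of $\eta$ and $\hat\beta$, with the cone $T = \{u : \|u_{S^c}\|_1 \le \|u_S\|_1\} \subset T_{\texttt{lasso}}(4s)$ capturing the restricted directions. Because $R = \|\beta^*\|_1$ and both $\eta$ and $\hat\beta$ are feasible while $\beta^*$ itself is on the boundary of the $\ell_1$-ball, one immediately gets $\eta - \beta^*, \hat\beta - \beta^* \in T$. For the squared loss one has $K = \Sigma$, so $\|\cdot\|_K = \|\cdot\|_\Sigma$, and the definition \eqref{eq:FirstOrderApproxRegularized} specializes to
\[
\eta = \argmin_{\|\beta\|_1 \le R}\Bigl\{-n^{-1}\textstyle\sum_i \varepsilon_i X_i^\top(\beta-\beta^*) + \tfrac12 \|\beta - \beta^*\|_\Sigma^2\Bigr\}.
\]

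\textbf{Step 1: bounding $\eta$.} Comparing this objective at $\eta$ and at the feasible point $\beta^*$ gives $\tfrac12 \|\eta - \beta^*\|_\Sigma^2 \le n^{-1}\sum_i \varepsilon_i X_i^\top (\eta - \beta^*)$. Dividing by $\|\eta - \beta^*\|_\Sigma$ reduces matters to bounding
\[
M := \sup_{u \in T,\,\|u\|_\Sigma \le 1} \Bigl|n^{-1}\textstyle\sum_i \varepsilon_i X_i^\top u\Bigr|.
\]
I would control $M$ by conditioning on $(\varepsilon_i)$: under \ref{eq:SubGaussianAssump} the process $u\mapsto n^{-1}\sum_i \varepsilon_i X_i^\top u$ is sub-Gaussian with increments of order $L\sigma^* n^{-1/2}\|u-u'\|_\Sigma$, so Talagrand's comparison and a sub-Gaussian deviation inequality yield $M \lesssim L\sigma^* n^{-1/2}\gamma(T,\Sigma)$ with probability at least $1 - 2 e^{-c n r_n^2}$. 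By \Cref{lemma:upper-bound-cone-T-lasso} applied with $k = 4s$, $n^{-1/2}\gamma(T,\Sigma) \lesssim r_n$, so $M \lesssim L\sigma^* r_n$ and hence $\|\eta-\beta^*\|_\Sigma \lesssim L\sigma^* r_n$.

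\textbf{Step 2: bounding $\hat\beta$.} The basic inequality $F(\hat\beta) \le F(\beta^*)$ for the squared loss rearranges to
\[
\tfrac{1}{2n}\textstyle\sum_i (X_i^\top(\hat\beta - \beta^*))^2 \;\le\; n^{-1}\textstyle\sum_i \varepsilon_i X_i^\top (\hat\beta - \beta^*),
\]
whose right-hand side is again bounded by $M\cdot\|\hat\beta-\beta^*\|_\Sigma \lesssim L\sigma^* r_n\|\hat\beta-\beta^*\|_\Sigma$. For the left-hand side I would invoke restricted strong convexity for $L$-sub-Gaussian designs on $T$: on an event of probability at least $1-2e^{-cnr_n^2}$,
\[
n^{-1}\textstyle\sum_i (X_i^\top u)^2 \;\ge\; (1 - C L^2 r_n)\,\|u\|_\Sigma^2 \quad \text{for every } u \in T,
\]
which follows from a Mendelson small-ball / matrix-deviation bound where the deficit is governed by $L^2 n^{-1/2}\gamma(T,\Sigma)/\phi(T)\lesssim L^2 r_n$. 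Combining these two bounds and dividing through by $\|\hat\beta-\beta^*\|_\Sigma$ yields $\|\hat\beta - \beta^*\|_\Sigma \lesssim L\sigma^* r_n/(1 - C L^2 r_n)$.

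\textbf{Main obstacles.} The calculus and the cone inclusions are routine; the real work sits in the two high-probability empirical process facts: (i) the conditional sub-Gaussian bound $M \lesssim L\sigma^* r_n$, and (ii) the lower RSC bound over $T$ with multiplicative deficit $CL^2 r_n$. Both are in the literature for sub-Gaussian designs on $T_{\texttt{lasso}}(k)$, but one must track constants carefully so that the $\sigma^*$ appearing in the final bound is literally $(n^{-1}\sum\varepsilon_i^2)^{1/2}$ (not its expectation) and so that the same high-probability event on which \Cref{thm:main} is applied supports both (i) and (ii).
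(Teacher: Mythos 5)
Your proposal is correct and follows essentially the same route as the paper: cone membership from $R=\|\beta^*\|_1$, the basic inequality with the cross term bounded by a supremum of a subGaussian process over $T$ (conditioning on $\varepsilon$, which is exactly how the paper shows $Z/\sigma^*$ is $L$-subGaussian), and for $\hat\beta$ a uniform two-sided deviation bound for $\|\X u\|/\|\Sigma^{1/2}u\|$ over the cone (the paper cites Liaw--Mehrabian--Plan--Vershynin where you invoke a Mendelson-type matrix deviation bound). The only cosmetic difference is that the paper divides through by $\|\Sigma^{1/2}(\hat\beta-\beta^*)\|$ before applying the deviation inequality rather than stating it as a lower RSC bound.
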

The above lemma provides a slight improvement in the rate compared to \cite[Theorem 11.1(a)]{hastie2015statistical}.
Combined with inequality \eqref{eq:constrained-lasso-upper}, we have established that
$\|\Sigma^{1/2}(\hbeta-\eta)\| \lesssim L^2\sigma^* r_n^{3/2}$. If $r_n\to 0$ (e.g., if $s\log(ep/s)/n\to 0$ while
$\phi(T)$ stays bounded away from 0), this means that the distance $\|\Sigma^{1/2}(\hbeta-\eta)\|$
between $\hbeta$ and $\eta$ is an order
of magnitude smaller than the risk bounds in \eqref{risk-constrained-lasso}.

Inclusion \eqref{set-T-constrained-lasso} is granted regardless of the loss $\ell$,
as soon as $\beta^*$ lies on the boundary of  $\{\beta\in\R^p:\|\beta\|_1=R\}$.
In logistic regression, i.e., the setting of \Cref{prop:logistic-setting} with the constrained Lasso penalty \eqref{eq:penalty-constrained-lasso},
inequality \eqref{eq:slow-rate-logistic} yields that with high probability,
$
\|\eta-\hbeta\|_K\lesssim L[r_n^{1/2} + L^{1/2}(1+r_n^3\sqrt n)\mathcal{E}^{1/2} ]
\mathcal{E}
$.
An extra term appears compared to the squared loss. In order
to obtain a first-order expansion as in \eqref{approximation-intro}
requires $r_n\to 0$ as well as $(1+r_n^3\sqrt n)\mathcal{E}^{1/2} \to 0$.
These conditions can be obtained if risk bounds such as \eqref{risk-constrained-lasso}
are available, see \cite{negahban2009unified,alquier2019estimation} or \Cref{prop:RateLogisticLasso-mainpaper} and its proof in \Cref{sec:RSCLogistic}
for applicable general techniques.
A more detailed discussion of Logistic Lasso is given in the next subsection.

\subsection{Penalized Lasso}
\label{sec:lasso-penalized}

We now consider the $\ell_1$-norm penalty 
\begin{equation}
    \label{h-penalized-lasso}
 h(\beta)=\lambda \|\beta\|_1 \qquad \text{
 for some }\qquad \lambda\ge 0.    
\end{equation}

Here, the fact that $\hbeta-\beta^*,\eta-\beta^*\in T$ for some low-dimensional cone $T$
is not granted almost surely, in that regard the situation differs
from the constrained Lasso case in 
\eqref{set-T-constrained-lasso}. We may find such low-dimensional cone $T$
simultaneously for $\hbeta,\eta$ for both the squared loss and logistic loss as follows,
using ideas from \cite{negahban2009unified,bickel2009simultaneous}.
Let $f_n$ be the convex function so that the objective in \eqref{hbeta-intro}
is equal to $f_n(\beta)+ h(\beta)$ and let $g_n$ be the convex function so that
the objective in \eqref{eq:FirstOrderApproxRegularized} is $g_n(\beta) + h(\beta)$.
Since $\hbeta$ and $\eta$ are solutions of the corresponding optimization problems
\eqref{hbeta-intro} and \eqref{eq:FirstOrderApproxRegularized},
\begin{equation}
    \label{optimality-additive-penalty}
    \begin{split}
    h(\hbeta) - h(\beta^*) &\le f_n(\beta^*) - f_n(\hbeta) \le \nabla f_n(\beta^*)^T(\beta^*-\hbeta),
                         \\ 
    h(\eta) - h(\beta^*) &\le g_n(\beta^*) - g_n(\eta) \le \nabla g_n(\beta^*)^T(\beta^* -\eta).
    \end{split}
\end{equation}
Since $\nabla g_n(\beta^*)=\nabla f_n(\beta^*)$, both $\eta$ and $\hbeta$ belong to the
set $\hat T = \{ b\in\R^p: h(b) - h(\beta^*) \le \nabla f_n(\beta^*)^T(b - \beta^*)\}$.
Next, for both the squared loss and the logistic loss,
$\nabla f_n(\beta^*)$ has subGaussian coordinates under~\ref{eq:SubGaussianAssump}.
Combining these remarks, we obtain the following, proved in supplement.
% , rigorously proved in the appendix.

\begin{restatable}{lemma}{lemmaPenalizedLassoBelongsToCone}
    \label{lemma:penalizedLassoBelongstoCone}
    Let $h$ be as in \eqref{h-penalized-lasso}.
    Consider the linear model \eqref{eq:linear-model} and assume~\ref{eq:SubGaussianAssump},~\ref{eq:Normalized}.
    Let $\xi>0$ be a constant and let $\lambda = L \sigma^*(1+3\xi)\sqrt{2\log(p/s)/n}$ where
    $(\sigma^*)^2=(\eps_1^2+\ldots+\eps_n^2)/n$ and $\|\beta^*\|_0=s$. Then
    \begin{equation}
        \label{eq:cone-lasso-penalized}
        \P\left[
        \{\hbeta-\beta^*,\eta-\beta^*\} \subset
        T
        \right]
        \ge
        1 - \frac{2}{\xi^2\log(p/s)(p/s)^\xi}
        \text{ where }
        T =  T_{\texttt{lasso}}\left(s(6+2\xi^{-1})^2\right).
    \end{equation}
    If instead the logistic regression model and assumptions of
    \Cref{prop:logistic-setting} are fulfilled and 
    $\lambda = (L/2)(1+3\xi)\sqrt{2\log(p/s)/n}$, then the previous display~\eqref{eq:cone-lasso-penalized} also holds.
\end{restatable}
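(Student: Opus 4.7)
The plan is to combine the KKT characterizations in~\eqref{optimality-additive-penalty} with a subGaussian concentration bound on the common score $\nabla f_n(\beta^*)$. The key structural observation is that $\nabla g_n(\beta^*) = \nabla f_n(\beta^*)$ by construction of $g_n$, so a single high-probability event on $\|\nabla f_n(\beta^*)\|_\infty$ simultaneously governs the cone membership of $\hbeta-\beta^*$ and $\eta-\beta^*$.

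Specializing~\eqref{optimality-additive-penalty} to $h(\beta)=\lambda\|\beta\|_1$ and using the elementary decomposition $\|\beta^*+u\|_1-\|\beta^*\|_1\ge\|u_{S^c}\|_1-\|u_S\|_1$ with $S=\supp(\beta^*)$, together with H\"older's inequality on the gradient term, shows that on the event $\mathcal{A}_c:=\{\|\nabla f_n(\beta^*)\|_\infty\le c\lambda\}$ any $u\in\{\hbeta-\beta^*,\eta-\beta^*\}$ satisfies $(1-c)\|u_{S^c}\|_1\le(1+c)\|u_S\|_1$. Cauchy--Schwarz on $S$ then gives $\|u\|_1\le(2\sqrt s/(1-c))\|u\|_2$, i.e.\ $u\in T_{\texttt{lasso}}(4s/(1-c)^2)$. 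Choosing $c=(2\xi+1)/(3\xi+1)$ makes $1-c=\xi/(3\xi+1)$ and hence $4s/(1-c)^2 = s(6+2\xi^{-1})^2$, which is exactly the advertised radius.

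The remaining step is to lower-bound $\P(\mathcal{A}_c)$. For the squared loss $\nabla f_n(\beta^*)=-n^{-1}\sum_i X_i\eps_i$, so conditionally on $\eps$ each coordinate is mean-zero and, by~\ref{eq:SubGaussianAssump} together with~\ref{eq:Normalized}, $L\sigma^*/\sqrt n$-subGaussian. With the stated $\lambda$ the threshold reads $c\lambda = L\sigma^*(2\xi+1)\sqrt{2\log(p/s)/n}$, so a Mill's-ratio-type coordinate tail combined with a union bound over the $p$ indices, together with $(2\xi+1)^2\ge 1+\xi$, can be reshaped into the advertised $2/(\xi^2\log(p/s)(p/s)^\xi)$. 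For the logistic loss the score $\nabla f_n(\beta^*)=n^{-1}\sum_i(Y_i-\E[Y_i\mid X_i])X_i$ is mean-zero conditional on $X$ and its summands are bounded entrywise by $|X_{ij}|$, so after conditioning on $X$ a Hoeffding argument puts us in exactly the same subGaussian regime with the multiplicative factor $L\sigma^*$ replaced by the $L/2$ that appears in the logistic choice of $\lambda$.

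The main obstacle is this concentration step, not the cone manipulation: the naive subGaussian union bound only yields $2p\exp(-(2\xi+1)^2\log(p/s))$, and matching the target $2/(\xi^2\log(p/s)(p/s)^\xi)$ requires the Mill's-ratio refinement of the Gaussian tail (the $\sigma/t$ saving), which has to be imported carefully through the conditioning on $\eps$ (respectively on $X$) since only subGaussianity is assumed. Once this sharpened tail bound is in hand, the proof is essentially algebraic and applies identically in the linear and logistic cases.
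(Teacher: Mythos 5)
There is a genuine gap, and it sits exactly where you flagged the ``main obstacle'': the event $\mathcal{A}_c=\{\|\nabla f_n(\beta^*)\|_\infty\le c\lambda\}$ with $c\lambda = L\sigma^*(2\xi+1)\sqrt{2\log(p/s)/n}$ simply does not hold with the claimed probability, and no Mill's-ratio refinement can repair it. Writing $U_j$ for the (1-subGaussian) normalized gradient coordinates, your event requires $\max_{j\le p}|U_j|\le(2\xi+1)\sqrt{2\log(p/s)}$, whereas the maximum of $p$ subGaussian coordinates is of order $\sqrt{2\log p}$. The union bound gives failure probability $2p(s/p)^{(2\xi+1)^2}$, which carries an unavoidable factor of $p$; the target $2/(\xi^2\log(p/s)(p/s)^{\xi})$ carries no such factor. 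Concretely, for $s=p^{1/2}$ the lemma's bound tends to $0$ for every $\xi>0$, while $2p\cdot p^{-(2\xi+1)^2/2}\to\infty$ whenever $\xi<(\sqrt2-1)/2$. The Mill's-ratio saving is only a factor $1/\sqrt{\log(p/s)}$ and cannot absorb a polynomial factor of $p$ (or $s$). This is not a technicality: the whole point of taking $\lambda\asymp\sqrt{\log(p/s)/n}$ rather than $\sqrt{\log p/n}$ is that uniform $\ell_\infty$ control of the score at level $c\lambda$ is \emph{false}, so the classical cone argument you are running must be modified.

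The paper's proof circumvents this by never demanding that all coordinates fall below the threshold. It sets $\mu=(1+\xi)\sqrt{2\log(p/s)}$, shows via stochastic domination by an exponential variable that $\E[(|U_j|-\mu)_+^2]\le 4e^{-\mu^2/2}$, hence $\E\sum_{j=1}^p(|U_j|-\mu)_+^2\le 4s/(p/s)^{\xi}$, and applies Markov's inequality to get the event $\frac1s\sum_j(|U_j|-\mu)_+^2\le 2\xi^2\log(p/s)$ with exactly the advertised probability. On that event a few coordinates may exceed $\mu$, but the sum of squares of the $s$ largest $|U_j|$ is controlled, so the inner product $U^\top\delta$ is split over three index sets --- the support $S$, the set $\hat A$ of the $s$ largest coordinates of $U$, and the remainder (on which $|U_j|\le(1+2\xi)\sqrt{2\log(p/s)}$ automatically) --- each handled by Cauchy--Schwarz. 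This yields $0\le(2+5\xi)\sqrt{2\log(p/s)}\sqrt s\|\delta\|_2-\xi\sqrt{2\log(p/s)}\|\delta_{S^c}\|_1$ and hence the cone $T_{\texttt{lasso}}(s(6+2\xi^{-1})^2)$. Your cone algebra (the choice $c=(2\xi+1)/(3\xi+1)$ reproducing the radius $(6+2\xi^{-1})^2 s$) is correct as far as it goes, but you would need to replace the $\ell_\infty$ event by this averaged exceedance event and redo the H\"older step as a three-way decomposition; as written, the concentration step fails and the proof does not go through.
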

The set $T$ above is the set $T_{\texttt{lasso}}(k)$ in \eqref{T_lasso} with $k=s(6+2\xi^{-1})^2$.
Eq. \eqref{eq:cone-lasso-penalized} defines a low-dimensional cone $T$ that contains
both error vectors $\hbeta-\beta^*,\eta-\beta^*$ for the squared loss and the logistic loss.
The Gaussian width of the set $T$ in \eqref{eq:constrained-lasso-upper}
is already bounded in \Cref{lemma:upper-bound-cone-T-lasso}.
Hence the Gaussian width of $T$ in the previous lemma is bounded
from above as in the previous section,
i.e., $\gamma(\Sigma,T)\lesssim\phi(T)^{-1}(6+2\xi^{-1})\sqrt{s\log(2p/s)}$
    by \Cref{lemma:upper-bound-cone-T-lasso},
and the ``slow rate'' inequality
\eqref{eq:constrained-lasso-upper} again holds with high probability,
where $\phi(T)$ denotes the restricted eigenvalue of the set $T$
of the previous lemma.
Risk bounds similar to \eqref{risk-constrained-lasso} are given below.
We emphasize here the fact that the error vectors
of the Lasso belong to the cone \eqref{eq:cone-lasso-penalized}
with high probability is not new: this is a powerful technique used throughout
the literature on high-dimensional statistics starting from \cite{bickel2009simultaneous,negahban2009unified}.
The novelty of our results
are inequalities such as \eqref{eq:constrained-lasso-upper} which shows
that the distance $\|\Sigma^{1/2}(\hbeta-\eta)\|$ is an order of magnitude
faster than the minimax risk $\sqrt{s\log(ep/s)/n}$. 
\Cref{table:squared} in the supplement
summarizes the tuning parameters,
minimax rates for $\|\hat\beta-\beta^*\|_\Sigma$, Gaussian width bound and
upper bounds  on $\|\hat\beta-\eta\|_\Sigma$.

We will now state a result similar to~\Cref{lemma:risk-constrained-lasso} for linear and logistic Lasso.

\begin{restatable}{proposition}{propRateLogisticLasso}
\label{prop:RateLogisticLasso-mainpaper}
Consider the penalized Lasso estimator $\hat{\beta}$ given by
\[\textstyle
\hat{\beta} := \argmin_{\beta\in\mathbb{R}^p}\,\frac{1}{n}\sum_{i=1}^n \ell(Y_i, X_i^{\top}\beta) + \lambda\|\beta\|_1,
\] 
where $\ell$ is either the squared or logistic loss 
and 
$\lambda$ is chosen as in~\Cref{lemma:penalizedLassoBelongstoCone} for some $\xi > 0$. 
Assume~\ref{eq:LossAssump},~\ref{eq:SubGaussianAssump}. 
With $T$ defined in~\eqref{eq:cone-lasso-penalized},
assume that $\exists\theta>0$ s.t.
for all $u\in T$ with $\|u\|_K \le 1$,
\begin{equation}\label{eq:RSC}\textstyle
\theta^2\|u\|_{K}^2 \le \frac{1}{n}\sum_{i=1}^n \left\{\ell(Y_i, X_i^{\top}\beta^* + X_i^{\top}u) - \ell(Y_i, X_i^{\top}\beta^*) - u^{\top}X_i \ell'(Y_i, X_i^{\top}\beta^*)\right\},
\end{equation}
as well as
\begin{equation}\label{eq:AssumptionSP}
L(2 + 5\xi)\sqrt{2s\log(p/s)/n} \le B_3^{1/2}\phi(T)\theta^2\times\begin{cases}1/\sigma^*,&\mbox{for $\ell$, the squared loss},\\2,&\mbox{for $\ell$, the logistic loss.}\end{cases}
\end{equation}
Then with probability at least $1 - 2/(\xi^2\log(p/s)(p/s)^{\xi})$,
\begin{equation}\label{eq:EstimationErrorRates}
\|\hat{\beta} - \beta^*\|_K \le \frac{L(2 + 5\xi)}{B_3^{1/2}\phi(T)\theta^2}\sqrt{\frac{2s\log(p/s)}{n}}\times\begin{cases}\sigma^*,&\mbox{for $\ell$, the squared loss},\\
0.5,&\mbox{for $\ell$, the logistic loss.}\end{cases}
\end{equation}
\end{restatable}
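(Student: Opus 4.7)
The plan follows the standard three-step scheme for penalized $M$-estimators: use \Cref{lemma:penalizedLassoBelongstoCone} to localize $\hat u := \hbeta - \beta^*$ in a low-dimensional cone, use optimality of $\hbeta$ in its defining problem to obtain a basic inequality on the Bregman remainder of the loss, and invert that inequality through the RSC condition \eqref{eq:RSC}. I would work on the event $\mathcal{A}$ from \Cref{lemma:penalizedLassoBelongstoCone}, of probability at least $1 - 2/(\xi^2\log(p/s)(p/s)^\xi)$, on which $\hat u \in T = T_{\texttt{lasso}}(s(6+2\xi^{-1})^2)$. From the proof of that lemma I would also read off the companion score bound $\|\nabla f_n(\beta^*)\|_\infty \le \rho\lambda$ with $\rho = (1+2\xi)/(1+3\xi) < 1$ (the exact ratio producing the cone radius $\sqrt k = 2(1+3\xi)/\xi$), where $f_n(\beta) := n^{-1}\sum_i \ell(Y_i, X_i^\top\beta)$; this comes from a subGaussian tail applied to $n^{-1}\sum_i X_i\eps_i$ in the squared case and from a Hoeffding-type tail applied to the bounded mean-zero residuals $Y_i - \E[Y_i\mid X_i]$ in the logistic case.

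\paragraph{Basic inequality.}
Let $D(u) := f_n(\beta^*+u) - f_n(\beta^*) - \nabla f_n(\beta^*)^\top u \ge 0$ be the Bregman remainder, nonnegative by convexity of $\ell$. Penalized optimality $f_n(\hbeta)+\lambda\|\hbeta\|_1 \le f_n(\beta^*)+\lambda\|\beta^*\|_1$ rearranges to $D(\hat u) \le -\nabla f_n(\beta^*)^\top \hat u + \lambda(\|\beta^*\|_1 - \|\hbeta\|_1)$. Bound the linear term by $\rho\lambda\|\hat u\|_1 = \rho\lambda(\|\hat u_S\|_1 + \|\hat u_{S^c}\|_1)$ with $S := \supp(\beta^*)$ via H\"older, and use $\|\beta^*\|_1 - \|\hbeta\|_1 \le \|\hat u_S\|_1 - \|\hat u_{S^c}\|_1$. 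Collecting and discarding the (nonpositive) $-(1-\rho)\|\hat u_{S^c}\|_1$ contribution,
\begin{equation*}
D(\hat u) \,\le\, \lambda\bigl[(1+\rho)\|\hat u_S\|_1 - (1-\rho)\|\hat u_{S^c}\|_1\bigr] \,\le\, \lambda\cdot\tfrac{2+5\xi}{1+3\xi}\,\|\hat u_S\|_1 \,\le\, \lambda\cdot\tfrac{2+5\xi}{1+3\xi}\,\sqrt s\,\|\hat u\|_2,
\end{equation*}
where the identity $1+\rho = (2+5\xi)/(1+3\xi)$ is direct algebra and Cauchy--Schwarz on the $s$ coordinates of $S$ supplies the last step. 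Converting $\|\hat u\|_2$ via the restricted eigenvalue ($\hat u \in T$) together with the $K/\Sigma$ comparison from \eqref{eq:ProcessControlAssumption}, and then substituting $\lambda$ from \Cref{lemma:penalizedLassoBelongstoCone}, turns the right-hand side into an expression of the form $L(2+5\xi)\,c_\ell\sqrt{2s\log(p/s)/n}\cdot\|\hat u\|_K\cdot[\phi(T), B_3 \text{ factors}]$, with $c_\ell \in \{\sigma^*, 1/2\}$ depending on the loss.

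\paragraph{RSC, truncation, and conclusion.}
The RSC bound \eqref{eq:RSC} gives $D(u) \ge \theta^2\|u\|_K^2$ only on $\{u \in T : \|u\|_K \le 1\}$. To certify that $\hat u$ lies in this regime, use convexity of $D$ with $D(0)=0$: if $r := \|\hat u\|_K > 1$, then $\hat u/r \in T$ has unit $K$-norm, so \eqref{eq:RSC} gives $D(\hat u/r) \ge \theta^2$ while convexity gives $D(\hat u/r) \le D(\hat u)/r$, whence $D(\hat u) \ge r\theta^2$. Plugging this lower bound into the upper bound of the previous step and cancelling the common factor $r$ forces exactly the inequality that \eqref{eq:AssumptionSP} is designed to negate; the $r > 1$ branch is therefore impossible, so $\|\hat u\|_K \le 1$, and \eqref{eq:RSC} applies directly. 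Dividing $\theta^2\|\hat u\|_K^2 \le (\text{upper bound})$ by $\|\hat u\|_K$ yields \eqref{eq:EstimationErrorRates}. The main obstacle throughout is bookkeeping: the $(2+5\xi)/(1+3\xi)$ factor emerges only when the companion score bound in step 1 is calibrated to exactly the $\rho = (1+2\xi)/(1+3\xi)$ that matches the cone radius in \Cref{lemma:penalizedLassoBelongstoCone}, and the $B_3^{1/2}$ and $\phi(T)$ factors arising from the $\|\cdot\|_2 \leftrightarrow \|\cdot\|_K$ conversion must align precisely with the form of \eqref{eq:AssumptionSP} so that the truncation argument closes.
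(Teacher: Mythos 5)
Your overall architecture --- basic inequality from penalized optimality, control of the empirical score term, then inversion through the RSC condition \eqref{eq:RSC} with a rescaling/truncation step to handle the restriction $\|u\|_K\le 1$ --- is the same as the paper's, and your truncation argument (contradiction on $r=\|\hat u\|_K>1$ via convexity of the Bregman remainder) is equivalent to the paper's device of setting $t=\min\{1,\cdot\}$ and showing $t=1$. However, there is a genuine gap in how you control the score term. You propose to bound $-\nabla f_n(\beta^*)^\top\hat u$ by $\rho\lambda\|\hat u\|_1$ via a uniform bound $\|\nabla f_n(\beta^*)\|_\infty\le\rho\lambda$ with $\rho=(1+2\xi)/(1+3\xi)$, claiming this can be ``read off'' from the proof of \Cref{lemma:penalizedLassoBelongstoCone}. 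It cannot. With $\lambda\asymp\sqrt{2\log(p/s)/n}$, the bound $\|\nabla f_n(\beta^*)\|_\infty\le\rho\lambda$ fails with probability close to one whenever $s$ grows: the maximum of $p$ (sub)Gaussian coordinates of $\Sigma^{1/2}Z$ concentrates at $\sqrt{2\log p}$, not $\sqrt{2\log(p/s)}$. Avoiding exactly this $\ell_\infty$ bound is the point of the paper's smaller tuning parameter (flagged as the ``main novelty'' after the proposition), and the event \eqref{event-h-subgaussian-probability-bound} used in \Cref{lemma:penalizedLassoBelongstoCone} deliberately controls only $\sum_j(|U_j|-\mu)_+^2$, which yields an averaged bound $\bigl(\tfrac1s\sum_{j\in A}U_j^2\bigr)^{1/2}\le(1+2\xi)\sqrt{2\log(p/s)}$ over any $s$-subset $A$ and an $\ell_\infty$ bound \emph{only off} the set $\hat A$ of the $s$ largest $|U_j|$.

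The fix, which is what the paper's proof does, is to keep the decomposition $U^\top\delta=\sum_{j\in S}+\sum_{j\in\hat A\setminus S}+\sum_{j\notin S\cup\hat A}$: bound the first two sums by $\sqrt{s}(1+2\xi)\sqrt{2\log(p/s)}\,\|\delta\|_2$ via Cauchy--Schwarz and the averaged bound, and only the third by $(1+2\xi)\sqrt{2\log(p/s)}\,\|\delta_{S^c}\|_1$, which is then absorbed by the $-\lambda\|\delta_{S^c}\|_1$ term from the penalty. This produces the $(2+5\xi)$ coefficient in \eqref{eq:UprBnd} directly, after which your norm conversions $\|\delta\|_2\le\phi(T)^{-1}\|\delta\|_\Sigma\le B_3^{1/2}\phi(T)^{-1}\|\delta\|_K$ and your RSC inversion go through exactly as you wrote them. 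Your version of the argument would be valid only under the classical, larger choice $\lambda\gtrsim\sqrt{\log p/n}$, which is not what the proposition assumes.
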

The proof is given \Cref{sec:RSCLogistic}.
Assumption~\eqref{eq:RSC} is the classical restricted strong convexity condition 
and 
we verify this for linear and logistic loss in~\Cref{prop:RSCVerifiedC}.
Results similar to~\Cref{prop:RateLogisticLasso-mainpaper} are known in the literature~\citep{negahban2009unified} but the main novelty of our result is that the tuning parameter $\lambda$ is of order $\sqrt{\log(p/s)/n}$ and not $\sqrt{\log(p)/n}$ which proves the minimax optimal rate.
\paragraph{Faster rates for the penalized Lasso.}

Fast rates for the Lasso can be obtained using the second inequality of
\Cref{thm:main}, which when specialized to the squared loss gives
\eqref{eq:square-loss-fast-rate}.
To verify the main additional assumption of $\hat{\beta}-\eta\in T$, we prove sparsity of $\eta$ and $\hat{\beta}$.
Since $\hat{\beta},\eta$ are defined through a penalized quadratic problem, we can leverage existing results in the literature that imply that
$\eta,\hat{\beta}$ satisfies $\|\eta\|_0\vee\|\hat{\beta}\|_0 \le \tilde C s$
under suitable conditions on the design and as long as $s\log(ep/s)/n$ is small enough,
for some constant $\tilde C$ that depends
on the restricted singular values of $\Sigma$; cf.,
e.g.,\cite[Lemma 1]{zhang2010nearly}, \cite[Theorem 3]{belloni2014pivotal} \cite[Lemma 3.5]{javanmard2015biasing},
\cite[Section 7.1]{bellec_zhang2019debiasing_adjust}.
We prove such as result for the Group-Lasso in \Cref{prop:SparsityEta} below.
% For instance,
% \cite[Lemma 6.1]{bellec_zhang2019debiasing_adjust} implies the following
% holds.
% \begin{lemma}
%     Consider the linear model \eqref{eq:linear-model} with $\|\beta^*\|_0 = s$
%     and assume that $X_1,...,X_n$ are iid $N(0,\Sigma)$ and $\eps_1,...,\eps_n$
%     are iid $N(0,\sigma^2)$ independent of the $X_i$'s.
%     Assume~\ref{eq:Normalized} and that 
%     $\spectrum(\Sigma) \subset [\rho_*,\rho^*]$ for constants $\rho_*,\rho^*$.
%     If $\lambda\ge1.01\sigma\sqrt{2\log(8p/s)}$ then
%     we have $\|\hbeta\|_0\vee \|\eta\|_0 \le \tilde C s$
%     with probability approaching one as $s/p\to 0$ and $s\log(ep/s)/n\to 0$,
%     where $\tilde C = 2 + \Cl{M_3}(\rho^*/\rho_*)$ for some absolute constant $\Cr{M_3}$.
% \end{lemma}
Now we define the cones $T_0$ and $T$ as the sets
\begin{equation}
    \label{cone-T_0-lasso-sparse}
    T_0 \coloneqq \{u\in\R^p: \|u\|_0\le (2\tilde{C} + 1) s\}
    \subset T = \{u\in\R^p: \|u\|_1 \le (2\tilde{C} + 1)^{1/2} \sqrt s\|u\|\}.
\end{equation}
where the inclusion is obtained thanks to the Cauchy-Schwarz inequality.
Then $\{\eta-\hbeta,\hbeta-\beta^*,\eta-\beta^*\}\subset T$ with high probability,
the Gaussian width $\gamma(T,\Sigma)$ is bounded by \Cref{lemma:upper-bound-cone-T-lasso}
and the second inequality of \Cref{thm:main} yields 
$$\|\Sigma^{1/2}(\eta-\hbeta)\| \lesssim
L^2r_n\mathcal{E},% + BL^3(1 + r_n^3\sqrt{n})\mathcal{E}^2,
\quad \text{where} \quad 
r_n =\phi(T)^{-1} (s \log(ep/s)/n)^{1/2}.$$
Since $\mathcal{E} \lesssim r_n$ with high
probability by known prediction bounds for the Lasso 
(see~\Cref{prop:RateLogisticLasso-mainpaper} and its proof in \Cref{sec:RSCLogistic} for rates with squared and logistic loss),
we obtain that with high probability,
\begin{equation}
    \label{fast-rate-lasso}
    \|\Sigma^{1/2}(\eta-\hbeta)\| \lesssim L^3\phi^{-2}(T) s \log(ep/s)/n = L^3r_n^2,% + BL^3(1 + r_n^3\sqrt{n})r_n^2,
\end{equation}
a rate that is the square of the minimax rate $r_n$, hence much smaller. % (if $r_n^3\sqrt{n} = O(1)$).
For squared loss, this rate is also faster than the rate obtained in \eqref{eq:constrained-lasso-upper}
which is of order $r_n^{3/2}$. This faster rate is obtained thanks
to the inclusion $\{\hbeta-\eta,\hbeta-\beta^*,\eta-\beta^*\}\subset T$, whereas in the setting
of \eqref{eq:constrained-lasso-upper} we only had $\{\hbeta-\beta^*,\eta-\beta^*\}\subset T$ but not $\hbeta-\eta\in T$.
To our knowledge, the only result in the literature similar to the above bounds is given by \cite[Theorem 5.1]{javanmard2015biasing}.
This result from \cite{javanmard2015biasing} shows that \eqref{fast-rate-lasso} holds for squared loss,
provided that the covariance $\Sigma$ satisfies (a) the minimal singular value of $\Sigma$ is at least $c_3>0$, (b) the maximal singular value of $\Sigma$ is at most $c_4$,
and (c) the covariance matrix $\Sigma$ satisfies
\begin{equation}
    \textstyle
    \max_{A\subset [p]: |A|\le c_5 s}
    \max_{j\in A}
    \sum_{j\in A^c} |\Sigma_{ij}| \le c_6.
    \label{assum:javanmard}
\end{equation}
Our results show that a first order expansion
for the Lasso can be obtained using the slow rate bound \eqref{eq:square-loss-slow-rate} without the requirement that the spectral norm
of $\Sigma$ is bounded, and for the fast rate without the stringent assumption \eqref{assum:javanmard}
on the correlations of $\Sigma$.
Not only do our results generalize Theorem 5.1 from \cite{javanmard2015biasing}
to more general $\Sigma$, \Cref{thm:main} shows how to obtain first-order
expansion $\eta$ beyond the squared loss (e.g. logistic loss) and beyond
the $\ell_1$-penalty of the lasso: the previous subsection
tackles the constrained Lasso penalty \eqref{eq:penalty-constrained-lasso}
and the next subsection tackles the Group-Lasso penalty.

% \arun{This paragraph can be completely removed. But I am not sure of the argument for saying sparsity of $\eta$ combined with cone condition for $\hbeta-\beta^*$ implies a low-dimensional set $T$ for all three vectors?}
Sparsity of $\eta$ for any general loss function is proved in~\Cref{prop:SparsityEta}. 
This alone does not imply inclusion of $\eta - \hat{\beta}$ in 
a low-dimensional set without sparsity of $\hat{\beta}$. 
Sparsity of $\hat{\beta}$ for general loss function is not well-studied but for logistic loss function Section D.4 of the supplement of~\cite{belloni2016post} proves a sparsity bound of the form $\|\hbeta\|_0 \le \tilde C s$, similar to the squared loss.
Unfortunately the proof there requires $\lambda \gtrsim \sqrt{\log p/n}$ instead of
condition $\lambda\gtrsim \sqrt{\log(p/s)/n}$ used in
\Cref{lemma:penalizedLassoBelongstoCone} above and
in~\cite{lecue2015regularization_small_ball_I,sun2013sparse,bellec2016slope,bellec_zhang2019debiasing_adjust,bellec2018nb_lsb}.

\subsection{Group-Lasso}
\label{subsec:GL}
Consider now a partition of $\{1,...,p\}$ into $M$ groups $G_1,...,G_M$.
For simplicity, we assume that the groups have the same size $d=p/M$,
which is typically the case in multitask learning with $d$ tasks and $M$ shared
features. The Group-Lasso penalty studied in this subsection is
\begin{equation}
\textstyle
\label{h-GL}
h(\beta) = \lambda \sum_{k=1}^M \|\beta_{G_k}\|
\qquad\text{ where }\beta_{G_k}\in\R^{|G_k|} \text{ is the restriction } (\beta_j, j\in G_k).
\end{equation}
In both the linear model with squared loss and in logistic regression with the logistic loss, we now show that 
$\hbeta-\beta^*$ and $\eta-\beta^*$ belong to a low-dimensional cone
(\Cref{lemma:GroupLassoBelongstoCone}),
and that the Gaussian width of this cone is bounded from above by
$\sqrt{s}(\sqrt d + \sqrt{2\log(M/s)})$ where $s$ is the number of groups
with $\beta^*_{G_k}\ne 0$ (\Cref{lemma:GroupLassoBoundGamma}).

\begin{restatable}{lemma}{lemmaGroupLassoBelongsToCone}
    % [Proved in the supplement]
    \label{lemma:GroupLassoBelongstoCone}
    Consider the linear model \eqref{eq:linear-model} and assume
    that $\max_{k=1,...,M}\|\Sigma_{G_k,G_k}\|_{op}\le 1$ 
    and that each group has the same size $|G_k|=d=p/M$.
    Let $\xi>0$ and set $\lambda = L \sigma^*(1+\xi)[\sqrt{d}+(1+2\xi)\sqrt{2\log(M/s)}]$
    where
    $(\sigma^*)^2=(\sum_{i=1}^n\eps_i^2)/n$ and $s$ is the number of groups
    with $\beta^*_{G_k}\ne 0$. 
    Then
    \begin{equation}
        \label{eq:belongs-to-cone-group-lasso-lemma-conclusion2}
        \P\left(
        \{\hbeta-\beta^*,\eta-\beta^*\} \subset
        T
        \right)
        \ge
        1 - 2\big/\left(2\xi^2\log(M/s)(M/s)^\xi\right).
    \end{equation}
    for $T =
        \{\delta\in\R^p:
            \sum_{k=1}^M\|\delta_{G_k}\| \le \sqrt s \|\delta\|_2(2+{3}{\xi^{-1}})
        \}$.
    If instead the logistic regression model and assumptions of
    \Cref{prop:logistic-setting} are fulfilled and 
    $\lambda$ is as above with $\sigma^*=1/2$, then \eqref{eq:belongs-to-cone-group-lasso-lemma-conclusion2} also holds.
\end{restatable}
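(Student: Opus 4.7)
The plan is to imitate the penalized-Lasso cone argument in \Cref{lemma:penalizedLassoBelongstoCone}, with the Group-Lasso structure taking the place of the $\ell_1$ norm. Let $f_n(\beta) := n^{-1}\sum_i \ell(Y_i, X_i^\top\beta)$ and let $g_n$ be the quadratic surrogate inside \eqref{eq:FirstOrderApproxRegularized}; by construction $\nabla g_n(\beta^*) = \nabla f_n(\beta^*) =: z$ where $z = n^{-1}\sum_i X_i \ell'(Y_i, X_i^\top\beta^*)$. Applying the basic inequalities \eqref{optimality-additive-penalty} to both $\hbeta$ and $\eta$ gives, for $\tilde\beta\in\{\hbeta,\eta\}$ and $\delta = \tilde\beta-\beta^*$, the master inequality $h(\tilde\beta) - h(\beta^*) \le -\langle z, \delta\rangle$, so the same noise functional $z$ controls both estimators simultaneously.

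For the Group-Lasso penalty \eqref{h-GL} and $S := \{k:\beta^*_{G_k}\ne 0\}$ (with $|S|\le s$), a block-wise triangle inequality gives $h(\tilde\beta) - h(\beta^*) \ge \lambda\bigl(\sum_{k\notin S}\|\delta_{G_k}\| - \sum_{k\in S}\|\delta_{G_k}\|\bigr)$, while the group-H\"older dual bound reads $|\langle z,\delta\rangle| \le Z^* \sum_k \|\delta_{G_k}\|$ with $Z^* := \max_k \|z_{G_k}\|$. Combined with the master inequality, this gives
\[
(\lambda - Z^*)\sum_{k\notin S}\|\delta_{G_k}\| \;\le\; (\lambda + Z^*)\sum_{k\in S}\|\delta_{G_k}\|.
\]
On the event $\{Z^* \le \lambda/(1+\xi)\}$, this rearranges, together with Cauchy--Schwarz $\sum_{k\in S}\|\delta_{G_k}\|\le\sqrt s\|\delta\|$, to $\sum_k \|\delta_{G_k}\| \le (2+2\xi^{-1})\sqrt s \|\delta\| \le (2+3\xi^{-1})\sqrt s \|\delta\|$, i.e.\ $\delta\in T$. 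This reduces the lemma to proving $\P(Z^* \le \lambda/(1+\xi)) \ge 1 - 1/[\xi^2\log(M/s)(M/s)^\xi]$.

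For the squared loss, $z_{G_k} = -n^{-1}\sum_i \eps_i X_{i,G_k}$ is, conditionally on $(\eps_i)_i$, mean zero and $(L\sigma^*/\sqrt n)$-subGaussian with respect to $\Sigma_{G_k,G_k}$ by~\ref{eq:SubGaussianAssump}. Using $\|\Sigma_{G_k,G_k}\|_{op}\le 1$ and $\mathrm{tr}(\Sigma_{G_k,G_k})\le d$, Borell/Hanson--Wright-type concentration for norms of subGaussian vectors yields (conditionally on $\eps$) a Gaussian-shaped tail $\P(\|z_{G_k}\|\ge (L\sigma^*/\sqrt n)(\sqrt d + u))\lesssim e^{-u^2/2}$. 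A union bound over $k=1,\ldots,M$ at the level $u = (1+2\xi)\sqrt{2\log(M/s)}$, combined with a Mills-ratio refinement that supplies the $1/(u\sqrt{2\pi})$ factor, upgrades $Me^{-u^2/2}$ into the claimed $1/[\xi^2\log(M/s)(M/s)^\xi]$ and matches the threshold $\lambda/(1+\xi) = (L\sigma^*/\sqrt n)[\sqrt d + (1+2\xi)\sqrt{2\log(M/s)}]$. For the logistic model, $\ell'(Y_i,X_i^\top\beta^*)$ is bounded in magnitude by $1$ with zero conditional mean and conditional variance at most $1/4$ given $X_i$; conditioning on $(X_i)$ and invoking a Hoeffding/Bernstein argument for bounded summands produces the same concentration with the role of $\sigma^*$ played by $1/2$, explaining the form of $\lambda$ in the logistic case.

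The main technical obstacle will be the last step: securing the $\sqrt d + u$ shape (not a cruder $\sqrt{d+u}$) of the concentration of $\|z_{G_k}\|$ in the genuinely $L$-subGaussian regime, with constants explicit enough to match the $(1+2\xi)$ factor in $\lambda$, and then performing the Mills-ratio polish that converts $M e^{-u^2/2}$ into the asserted $1/[\xi^2\log(M/s)(M/s)^\xi]$ probability. For Gaussian designs this is classical (Borell--TIS plus $\E\|\chi_d\|\le\sqrt d$ and the standard Mills ratio for the $\chi_d$ tail), but extending it uniformly in $L$ and $d$ for $L$-subGaussian designs requires care -- most cleanly via Hanson--Wright or a generic-chaining bound for suprema of $L$-subGaussian processes over the Euclidean unit ball of $\R^d$, with the residual $L$-dependence absorbed into the constants in $\lambda$.
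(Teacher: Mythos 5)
Your reduction of the lemma to the event $\{Z^*\le\lambda/(1+\xi)\}$, i.e.\ $\max_{k\le M}\|U_{G_k}\|\le \sqrt d+(1+2\xi)\sqrt{2\log(M/s)}$ after rescaling, is where the argument breaks. A union bound over all $M$ groups at level $u=(1+2\xi)\sqrt{2\log(M/s)}$ gives failure probability $M e^{-u^2/2}=M(s/M)^{(1+2\xi)^2}=s\,(s/M)^{4\xi+4\xi^2}$, which carries an uncompensated factor of $s$ and is not of the claimed order $1/[\xi^2\log(M/s)(M/s)^{\xi}]$; for $s$ polynomial in $M$ and $\xi$ small it can even exceed $1$, so the event you condition on need not hold with high probability at all. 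The Mills-ratio polish you invoke only buys a factor $1/u\asymp 1/\sqrt{\log(M/s)}$ and cannot absorb the factor $s$. This is not a technicality: the whole point of the tuning $\sqrt{2\log(M/s)}$ (rather than the classical $\sqrt{2\log M}$ of Lounici et al.) is that the sup-norm/block-H\"older bound $|\langle z,\delta\rangle|\le Z^*\sum_k\|\delta_{G_k}\|$ is too crude to support it. Your argument as written proves the lemma only for the larger tuning $\lambda\gtrsim L\sigma^*[\sqrt d+\sqrt{2\log M}]/\sqrt n$.

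The paper circumvents this by never controlling $\max_k\|U_{G_k}\|$ over all $M$ groups. It shows $(\|U_{G_k}\|-\sqrt d)_+$ is $1$-subGaussian (via Hsu--Kakade--Zhang applied to $\|U_{G_k}\|^2$), computes $\E\sum_{k=1}^M(\|U_{G_k}\|-\sqrt d-\mu)_+^2\le 2Me^{-\mu^2/2}$ with $\mu=(1+\xi)\sqrt{2\log(M/s)}$, and applies Markov's inequality to get, with the claimed probability, that the \emph{average of the $s$ largest} values of $\|U_{G_k}\|^2$ is at most $\lambda_0^2=(\sqrt d+(1+2\xi)\sqrt{2\log(M/s)})^2$. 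The noise term is then split three ways: on $S$ and on the set $\hat A$ of the $s$ largest groups one uses Cauchy--Schwarz against $\|\delta\|$ (giving $\sqrt s\,\lambda_0\|\delta\|$ each), and only on $(S\cup\hat A)^c$ — where each $\|U_{G_k}\|$ is individually below $\lambda_0$ because it is below the top-$s$ average — does one use the block-H\"older bound against $\sum_{k\notin S}\|\delta_{G_k}\|$. To repair your proof you would either have to adopt this average-of-top-$s$ device (and the accompanying three-way decomposition of $U^\top\delta$), or settle for the weaker lemma with $\log M$ in place of $\log(M/s)$.
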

The fact that the Group-Lasso belongs with high probability to a low-dimensional
cone has been used before to prove risk bounds, e.g., \cite{lounici2011oracle,bellec2017towards}.
However the tuning parameter in the above lemma is smaller than that used in these works
and using such cones to prove first expansion as in the present paper are, to our knowledge, novel.

\begin{restatable}{lemma}{lemmaGroupLassoBoundGamma}
    % [Proved in the supplement]
    \label{lemma:GroupLassoBoundGamma}
    Assume that $\max_{k=1,...,M}\|\Sigma_{G_k,G_k}\|_{op}\le 1$ 
    and that each group has the same size $|G_k|=d=p/M$.
    The set $T$ defined in the previous lemma satisfies
    $\gamma(T,\Sigma) \lesssim C(\xi) \phi(T)^{-1} \sqrt{sd + s\log(M/s)}$
    for some constant $C(\xi)$ that depends only on $\xi$.
\end{restatable}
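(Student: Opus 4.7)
The plan is to bound the Gaussian complexity of $T$ by reducing to a problem on $\R^M$ involving the group-norms $z_k := \|G_{G_k}\|$ of the Gaussian vector $G := \Sigma^{1/2}g \sim N(0,\Sigma)$, then exploit the effective group-sparsity of $T$ via a top-$(s+1)$ order-statistic truncation.

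The first step is a normalization. Since $u\in T$ together with $\|\Sigma^{1/2}u\|=1$ imply $\|u\|_2\le 1/\phi(T)$, a rescaling gives
\[
\gamma(T,\Sigma)\ \le\ \phi(T)^{-1}\,\E\sup_{u\in T:\,\|u\|_2\le 1}|G^\top u|.
\]
A groupwise Cauchy-Schwarz bound $|G^\top u|\le \sum_{k=1}^M z_k \|u_{G_k}\| = \langle z,w\rangle$ with $w_k:=\|u_{G_k}\|$ translates the constraints on $u$ into $\|w\|_2\le 1$ and $\|w\|_1\le\kappa:=(2+3\xi^{-1})\sqrt s$. Let $B$ index the top $s+1$ entries of $z$; splitting the inner product yields $\langle z,w\rangle\le \|z_B\|_2+\kappa z_{(s+1)}$ by Cauchy-Schwarz and H\"older, and using $\sqrt{s+1}\,z_{(s+1)}\le\|z_B\|_2$ with $\kappa/\sqrt{s+1}\le 2+3\xi^{-1}$ collapses both summands into $(3+3\xi^{-1})\|z_B\|_2$. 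The task therefore reduces to bounding $\E\|z_B\|_2$.

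For the last step I would bound $\E\max_{|B|=s+1}\|z_B\|_2^2$ by writing $z_k^2\le 2(\E z_k)^2 + 2H_k^2$ with $H_k := z_k - \E z_k$. The assumption $\|\Sigma_{G_k,G_k}\|_{op}\le 1$ makes each $z_k$ a $1$-Lipschitz function of $g$, so $\E z_k \le (\E z_k^2)^{1/2} = \sqrt{\Tr(\Sigma_{G_k,G_k})}\le \sqrt d$, yielding
\[
\E\max_{|B|=s+1}\|z_B\|_2^2\ \le\ 2(s+1)d + 2\,\E\max_{|B|=s+1}\|H_B\|_2^2.
\]
Each quadratic form $\sum_{k\in B} z_k^2 = g^\top \Sigma^{1/2} P_{\bigcup B}\Sigma^{1/2} g$ is controlled by Hanson-Wright, giving sub-exponential concentration around $\Tr(\Sigma_{\bigcup B,\bigcup B})\le (s+1)d$. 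A union bound over the $\binom{M}{s+1}\le (eM/(s+1))^{s+1}$ choices of $B$, followed by integration of the resulting tail, produces $\E\max_{|B|=s+1}\|H_B\|_2^2\lesssim s\log(eM/s)$. Combining with Jensen's inequality gives $\E\|z_B\|_2\lesssim\sqrt{sd+s\log(M/s)}$, proving the claim with $C(\xi)=O(1+\xi^{-1})$.

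The main technical obstacle is achieving the $\log(M/s)$ (rather than $\log M$) rate: the $z_k$'s are dependent nonlinear functions of the same $g$, so a standard Bernstein-on-independent-sums argument does not apply. The delicate point is the Frobenius/operator-norm accounting in the Hanson-Wright step so that the combinatorial factor $\log\binom{M}{s+1}\asymp s\log(eM/s)$ absorbs the union bound without generating extra $\|\Sigma\|_{op}$ dependence, using only the per-group normalization $\|\Sigma_{G_k,G_k}\|_{op}\le 1$ in the spirit of the Lounici--Pontil--Tsybakov analysis.
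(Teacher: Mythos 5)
Your reduction steps are sound and track the paper's proof closely: the normalization $\gamma(T,\Sigma)\le\phi(T)^{-1}\,\E\sup_{u\in T,\|u\|\le 1}|g^\top\Sigma^{1/2}u|$, the groupwise Cauchy--Schwarz passage to the vector $z$ of group norms of $\Sigma^{1/2}g$, the bound $\E z_k\le\sqrt{\Tr(\Sigma_{G_k,G_k})}\le\sqrt d$, and the top-$(s+1)$ order-statistic collapse are all correct, and the target $\E\max_{|B|=s+1}\|H_B\|^2\lesssim s\log(eM/s)$ is indeed true. The gap is in the one step you flag as "delicate" and leave unresolved: Hanson--Wright on $g^\top\Sigma^{1/2}P_{\cup B}\Sigma^{1/2}g$ plus a union bound over the $\binom{M}{s+1}$ subsets does not close under only the per-group normalization. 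The relevant matrix has the spectrum of $\Sigma_{\cup B,\cup B}$, whose operator norm is controlled only by $\sum_{k\in B}\|\Sigma_{G_k,G_k}\|_{op}\le s+1$ (the off-diagonal blocks are unconstrained; take all groups perfectly correlated to saturate this), and whose squared Frobenius norm can be of order $(s+1)^2 d$. To beat $\log\binom{M}{s+1}\asymp s\log(eM/s)$ in the Hanson--Wright exponent you then need a deviation $t\gtrsim \max\bigl(s\sqrt{d}\cdot\sqrt{s\log(eM/s)},\; s^2\log(eM/s)\bigr)$, which inflates the final bound to roughly $\sqrt{sd\cdot s\log(M/s)}+ s\sqrt{\log(M/s)}$ rather than $\sqrt{sd+s\log(M/s)}$. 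The union bound is exactly what loses the correlation structure, so this route cannot be repaired by bookkeeping alone.

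The fix — and the paper's actual argument — is to never form the joint quadratic form over $\cup B$ and never union-bound over subsets. Instead, use only marginal concentration: since $\|\Sigma_{G_k,G_k}\|_{op}\le1$, each $z_k=\|(\Sigma^{1/2}g)_{G_k}\|$ is a $1$-Lipschitz function of $g$ with $\E z_k^2\le d$, so $(z_k-\sqrt d)_+$ is $1$-subGaussian and, by stochastic domination by $\sqrt{2\tau}$ with $\tau$ exponential, $\E[(z_k-\sqrt d-\mu)_+^2]\le 2e^{-\mu^2/2}$ for every $k$. Writing $z_k=(z_k-\sqrt d-\mu)+(\sqrt d+\mu)$ and applying Cauchy--Schwarz to the first piece gives
\begin{equation*}
\sum_{k=1}^M w_k z_k \le \|w\|_2\Bigl(\sum_{k=1}^M (z_k-\sqrt d-\mu)_+^2\Bigr)^{1/2}+(\sqrt d+\mu)\|w\|_1,
\end{equation*}
and taking expectations with $\mu=\sqrt{2\log(M/s)}$ makes the first term $\lesssim\sqrt{Me^{-\mu^2/2}}=\sqrt{2s}$ by linearity (summing $M$ identical marginal expectation bounds — no union bound, no joint control needed), while the second is $\lesssim(1+\xi^{-1})\sqrt{s}(\sqrt d+\sqrt{\log(M/s)})$. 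This soft-thresholding decomposition makes your order-statistic reduction unnecessary and is where the $\log(M/s)$ (rather than $\log M$ or $s\log(M/s)$ squared) rate actually comes from.
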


Hence if the number of groups $M$,
the group-sparsity $s$ (number of groups such that $\beta^*_{G_k}\ne 0$) and the group size $d=p/M$ satisfy
$(sd + s\log(M/s))/n \to 0$
while $\phi(T)$ is bounded away from 0,
the above Lemmas combined with \Cref{thm:main} imply that $\eta$ is a first-order 
expansion of $\hbeta$ 
for both the squared loss in linear regression
and logistic loss in the logistic model.
We leverage this result to obtain an exact risk identity for the Group-Lasso in
the next section.

\begin{restatable}{proposition}{propSparsityEta}
    \label{prop:SparsityEta}
    Assume~\ref{eq:LossAssump},~\ref{eq:SubGaussianAssump}.
    Let the setting of \Cref{lemma:GroupLassoBoundGamma} be fulfilled.
    Fix $\lambda$ as in \Cref{lemma:GroupLassoBelongstoCone}
for both squared and logistic loss for some $\xi > 0$ and $T$ be the cone defined in~\Cref{lemma:GroupLassoBelongstoCone}. If $\|K\|_{op} \le C_{\max} < \infty$ and the assumptions of~\Cref{prop:RateLogisticLasso-mainpaper} hold, then 
\[
\mathbb{P}\left( |\{k\in[M]:\eta_{G_k} \ne 0 \}|
    \le s \tilde C \right) \ge
    1 - 2/(\xi^2\log(M/s)(M/s)^\xi),
\]
where $\tilde{C} := 1 + C_{\max} \{2(3+\xi)(1+\xi^{-1}) \}^2B_3^2\phi(T)^{-2}$.
For the squared loss, the same holds for $\hat{\beta}$
% $|\{k\in [M]: \hbeta_{G_k}\ne 0\}| \le (1+o(1))\tilde Cs$ with the same probability
with $\tilde{C}$ replaced by $(1 + o(1))\tilde{C}$
provided $\phi(T)^{-1}\sqrt{sd+s\log(M/s)}/\sqrt n \to 0$.
\end{restatable}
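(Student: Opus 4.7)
My plan is to read off sparsity from the KKT optimality conditions of the two convex programs, combined with the $\|\cdot\|_K$ risk bound. Since $\eta$ minimizes $g_n(\beta) + \lambda\sum_k \|\beta_{G_k}\|$ where the Hessian of $g_n$ is the deterministic matrix $K$, first-order optimality yields a vector $z$ in the subdifferential of $\sum_k \|\cdot\|_{G_k}$ at $\eta$ satisfying
\[
\nabla g_n(\beta^*) + K(\eta - \beta^*) + \lambda z = 0,
\qquad
\|z_{G_k}\| = 1 \text{ whenever } \eta_{G_k}\neq 0.
\]
Thus for every $k \in \hat S := \{k : \eta_{G_k}\neq 0\}$, $\|[\nabla g_n(\beta^*) + K(\eta-\beta^*)]_{G_k}\| = \lambda$. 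On the event from \Cref{lemma:GroupLassoBelongstoCone}, the group noise satisfies $\max_k \|[\nabla g_n(\beta^*)]_{G_k}\| \le \lambda_0$ for some $\lambda_0 = c(\xi)\lambda < \lambda$, so by the reverse triangle inequality, $\|[K(\eta - \beta^*)]_{G_k}\| \ge \lambda - \lambda_0$ for each $k \in \hat S$.

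Restricting the index to $\hat S \setminus S$, squaring, and summing gives $|\hat S \setminus S|(\lambda - \lambda_0)^2 \le \sum_k \|[K(\eta - \beta^*)]_{G_k}\|^2 = \|K(\eta - \beta^*)\|^2$. Using the operator inequality $K^2 \preceq \|K\|_{op} K$ together with $\|K\|_{op}\le C_{\max}$ bounds the right-hand side by $C_{\max}\|\eta - \beta^*\|_K^2$. To close the argument it remains to estimate $\|\eta - \beta^*\|_K$; this follows from the same basic-inequality argument that underlies \Cref{prop:RateLogisticLasso-mainpaper}, applied now to the exactly quadratic surrogate $g_n$ (for which restricted strong convexity holds automatically with $\theta = 1/\sqrt 2$). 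Starting from $g_n(\eta) + h(\eta)\le g_n(\beta^*)+h(\beta^*)$, using decomposability of the Group-Lasso penalty along the true support $S$, the $\Sigma$-to-$K$ comparison in~\ref{eq:LossAssump}, and the restricted eigenvalue $\phi(T)$ on the cone from \Cref{lemma:GroupLassoBelongstoCone}, one gets $\|\eta - \beta^*\|_K \lesssim \sqrt{B_3}\,(\lambda+\lambda_0)\sqrt{s}/\phi(T)$. Plugging this in and using $|\hat S| \le s + |\hat S \setminus S|$ produces $|\hat S| \le s\tilde C$ with $\tilde C$ of the stated form; the ``$1+$'' in $\tilde C$ is precisely the contribution of $|\hat S\cap S|\le s$.

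For $\hbeta$ in the squared-loss case, I would repeat the same KKT argument with $K$ replaced by the empirical Gram matrix $\hat K := n^{-1}\sum_i X_i X_i^\top$. The one new ingredient is a bound on $\|\hat K(\hbeta - \beta^*)\|^2$ in terms of $\|\hbeta - \beta^*\|_K^2$ for $\hbeta - \beta^* \in T$. Decomposing $\hat K = K + (\hat K - K)$, the first piece is handled exactly as above; the second requires a matrix deviation inequality restricted to $T$, namely $\sup_{u\in T}\|(\hat K - K)u\|/\|u\|_K \lesssim n^{-1/2}\gamma(T,\Sigma) \lesssim \phi(T)^{-1}\sqrt{(sd + s\log(M/s))/n}$ via \Cref{lemma:GroupLassoBoundGamma}. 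Under the extra assumption $\phi(T)^{-1}\sqrt{sd+s\log(M/s)}/\sqrt n \to 0$, this restricted deviation is $o(1)$, so $\|\hat K(\hbeta - \beta^*)\|^2 \le (1+o(1))C_{\max}\|\hbeta - \beta^*\|_K^2$, which yields the sparsity bound for $\hbeta$ with $\tilde C$ replaced by $(1+o(1))\tilde C$.

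I expect the main obstacle to be precisely this passage from deterministic $K$ to empirical $\hat K$ in the $\hbeta$ step. Because $p$ may be much larger than $n$, no full-space operator-norm bound on $\hat K - K$ is available, so the argument must leverage the fact that $\hbeta - \beta^* \in T$ with high probability and rely on a matrix deviation inequality restricted to the low-complexity cone $T$. Establishing this restricted concentration is exactly what forces the extra Gaussian-width condition; the KKT, triangle, and operator inequalities used elsewhere are automatic once \Cref{prop:RateLogisticLasso-mainpaper}, \Cref{lemma:GroupLassoBelongstoCone}, and \Cref{lemma:GroupLassoBoundGamma} are in hand.
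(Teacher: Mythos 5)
Your overall architecture matches the paper's: KKT conditions for the group-wise subgradient, squaring and counting the active groups, a basic-inequality risk bound for $\|\eta-\beta^*\|_K$, and for $\hat\beta$ a restricted deviation bound for the empirical Gram matrix on the cone $T$ (the paper invokes a result of Liaw--Mehrabian--Plan--Vershynin with the Gaussian-width bound of \Cref{lemma:GroupLassoBoundGamma}, exactly as you anticipate). However, there is one genuine gap in your noise-control step. You claim that on the event of \Cref{lemma:GroupLassoBelongstoCone} the group noise satisfies $\max_{k\in[M]}\|[\nabla g_n(\beta^*)]_{G_k}\|\le\lambda_0$ for some $\lambda_0=c(\xi)\lambda<\lambda$, and you then apply this uniform bound over the index set $\hat S\setminus S$, which can contain arbitrary groups. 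This is false for the tuning parameter used here: $\lambda$ is of order $\sqrt d+\sqrt{2\log(M/s)}$, whereas the maximum of the $M$ group-norms of the noise concentrates around $\sqrt d+\sqrt{2\log M}$, which exceeds any constant multiple of $\lambda$ once $s$ is a large power of $M$. The event \eqref{eq:above-event-group-lasso} only controls the \emph{sum} $\sum_k(\|U_{G_k}\|-\sqrt d-\mu)_+^2$, so a few groups may have noise well above $\lambda$. (Working with the larger classical tuning $\lambda\gtrsim\sqrt d+\sqrt{2\log M}$ would rescue your argument, but defeating that requirement is precisely the point of the proposition.)

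The paper's fix is to split off not the true support $S$ but the (random) set $\hat A$ of the $s$ groups with the \emph{largest noise} $\|(K^{1/2}Z)_{G_k}\|$: by \eqref{inequality-lambda-hat-GL}, the $(s{+}1)$-th largest group-noise is at most $(1+\xi)^{-1}\lambda$ on the stated event, so the reverse-triangle/counting argument goes through on $\mathcal B=\hat A^c\cap\mathrm{supp}_G(\eta)$, and the ``$1+$'' in $\tilde C$ accounts for $|\hat A|=s$ rather than for $|\hat S\cap S|$ as you state. With that substitution the remainder of your argument — $K^2\preceq\|K\|_{op}K$ (the paper actually only needs the restricted block $\|K_{\bar G,\bar G}\|_{op}$, a weaker quantity, though this is immaterial under the hypothesis $\|K\|_{op}\le C_{\max}$), the strong-convexity risk bound $\|\eta-\beta^*\|_K\lesssim(3+\xi)\sqrt s\,\lambda B_3\phi(T)^{-1}$, and the $(1+o(1))$ passage from $K$ to $\hat K$ for $\hat\beta$ under $\phi(T)^{-1}\sqrt{sd+s\log(M/s)}/\sqrt n\to0$ — is essentially the paper's proof.
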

The proof is given in \Cref{section-sparsity-eta-appendix}.
For the Lasso the assumption of $\|K\|_{op} \le C_{\max}$ can be relaxed to a bound
on the sparse maximal eigenvalue of $K$ using devices from
\cite[Lemma 1]{zhang2010nearly}, \cite[Corollary 2]{zhang2012},
\cite[Lemma 3]{belloni2013least}
or \cite[Proposition 7.4]{bellec_zhang2019debiasing_adjust}. See also
\cite[Theorem 3.1]{lounici2011oracle} and \cite[Lemma 6]{liu2009estimation}
for similar results for the Group-Lasso, although with a larger tuning parameter
than in \Cref{prop:SparsityEta}.

For the squared loss, if the condition number of $\Sigma$ stays bounded
then $C_{\max}/\phi(T)^{-2}$ is also bounded.
Then if $r_n = \sqrt{sd +s\log(M/s)}/\sqrt n\to 0$,
\Cref{prop:SparsityEta} yields that both $\hbeta-\eta$
belongs to the cone $\{\delta\in\R^p: \sum_{k=1}^M \|\delta_{G_k}\|
\le (1+o(1))(2\tilde C s)^{1/2} \|\delta\|_2\}$,
which yields the ``fast rate'' bound \eqref{eq:square-loss-fast-rate}.

For the square loss, \Cref{table:squared} in the supplement
summarizes the tuning parameters,
minimax rates for $\|\hat\beta-\beta^*\|_\Sigma$, Gaussian width bound and
upper bounds  on $\|\hat\beta-\eta\|_\Sigma$
for the Lasso and Group-Lasso.
\Cref{table:logistic} gives an analogous
summary for the logistic loss.

\section{Application to exact risk identities}\label{sec:ExactRiskBound}

In the linear model with the squared loss and identity covariance $(\Sigma=I_p)$,
the expansion $\eta$ in \eqref{eq:FirstOrderApproxRegularized}
is particularly simple: $\eta$ becomes the proximal operator
of the penalty $h$ at the point $z= \beta^* + n^{-1/2}\sum_{i=1}^n \eps_i X_i$,
i.e, $\eta=\prox_h(z)$ where $\prox_h(x)=\argmin_{b\in\R^p}\|x-b\|^2/2+h(b)$.
Hence the loss $\|\eta-\beta^*\|$ of $\eta$ has a simple form and
if a first-order expansion \eqref{approximation-intro} is available, for
instance for the Lasso or Group-Lasso as a consequence of the Lemmas of the
previous section,
then the loss $\|\hbeta-\beta^*\|$ is exactly the loss of $\prox(z)$ 
up to a smaller order term.
Let us emphasize that the next result and following discussion provide
exact risk identities for the loss $\|\hbeta-\beta^*\|$ (as in
\eqref{eq:ExactRiskSimplified} below), and not only upper bounds up to
multiplicative constants.

\begin{restatable}{theorem}{theoremExactRiskIdentity}[Exact Risk Identity]\label{thm:ExactRiskBound}
Consider the linear model \eqref{eq:linear-model} and the regularized problem~\eqref{hbeta-intro} with an arbitrary proper convex function $h(\cdot)$. Assume that $X_1, \ldots, X_n$ are iid $N(0, I_p)$ independent of $\eps_1,...,\eps_n$
and set $\sigma^*=(\frac 1 n \sum_{i=1}^n \eps_i^2)^{1/2}$.
Then with probability at least $1-2\exp(-t^2/2)$,
\begin{equation}\label{eq:ExactRiskBound}
\Big| \|\hat{\beta} - \beta^*\| - \mathbb{E}_Z\left[\|\beta^*-\prox_h(\beta^* + n^{-1/2}\sigma^*Z)
\|^2\right]^{1/2}
 \Big|
\le 
\frac{\sigma^*(t+1)}{n^{1/2}} + \|\hbeta - \eta \|
\end{equation}
where $Z=\tfrac{1}{n^{1/2}\sigma^*} \sum_{i=1}^n\eps_i X_i\sim N(0, I_p)$ and $\mathbb{E}_Z$ denotes the expectation with respect $Z$.
\end{restatable}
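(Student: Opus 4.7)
The plan is to observe that under the isotropic Gaussian design and squared loss hypothesized in the theorem, the first-order expansion $\eta$ from \eqref{eq:FirstOrderApproxRegularized} reduces to a proximal operator of a Gaussian-perturbed $\beta^*$. Since $\ell'(Y_i, X_i^\top\beta^*) = X_i^\top\beta^* - Y_i = -\eps_i$ and $K = \Sigma = I_p$, direct substitution gives $\eta = \prox_h(\beta^* - n^{-1}\sum_i\eps_i X_i)$. Conditional on $\eps = (\eps_1,\ldots,\eps_n)$, independence of $(X_i)$ from $\eps$ together with the iid $N(0, I_p)$ assumption forces $W := n^{-1}\sum_i \eps_i X_i \sim N(0, (\sigma^*)^2/n\cdot I_p)$, so $W = n^{-1/2}\sigma^* Z$ for $Z \sim N(0, I_p)$ conditional on $\eps$. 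By the symmetry $-Z \stackrel{d}{=} Z$, $\E_Z[\|\beta^* - \eta\|^2]$ equals the quantity $\E_Z[\|\beta^* - \prox_h(\beta^* + n^{-1/2}\sigma^* Z)\|^2]$ in the theorem statement.

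Next, the triangle inequality $|\|\hbeta - \beta^*\| - \|\eta - \beta^*\|| \le \|\hbeta - \eta\|$ accounts for the $\|\hbeta - \eta\|$ term on the right hand side. It then suffices to establish the conditional (given $\eps$) concentration estimate
$$\bigl|\|\eta - \beta^*\| - \E_Z[\|\eta - \beta^*\|^2]^{1/2}\bigr| \le \frac{\sigma^*(t+1)}{\sqrt n}$$
with probability at least $1 - 2e^{-t^2/2}$, which integrates to the same unconditional probability bound.

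For this concentration, define $f(z) := \|\beta^* - \prox_h(\beta^* - n^{-1/2}\sigma^* z)\|$. The proximal operator of a proper convex function is $1$-Lipschitz, hence $f$ is $(\sigma^*/\sqrt n)$-Lipschitz in $z$. Borell-TIS Gaussian Lipschitz concentration (applied conditional on $\eps$) then yields $|f(Z) - \E_Z f(Z)| \le t\sigma^*/\sqrt n$ with conditional probability at least $1 - 2e^{-t^2/2}$.

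The main obstacle is that the theorem normalizes by the $L^2$ mean $\E_Z[f(Z)^2]^{1/2}$, whereas Gaussian Lipschitz concentration centers at the $L^1$ mean $\E_Z f(Z)$. To close this Jensen gap, I would invoke the Gaussian Poincar\'e inequality $\text{Var}_Z(f(Z)) \le (\sigma^*/\sqrt n)^2$ combined with the elementary inequality $a - b \le \sqrt{a^2 - b^2}$ valid for $0 \le b \le a$ (applied with $a = \E_Z[f^2]^{1/2}$, $b = \E_Z f \ge 0$). This gives $|\E_Z[f^2]^{1/2} - \E_Z f| \le \sqrt{\text{Var}_Z(f(Z))} \le \sigma^*/\sqrt n$. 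Combining with the Borell-TIS bound yields $|f(Z) - \E_Z[f^2]^{1/2}| \le (t+1)\sigma^*/\sqrt n$, and adding the triangle inequality contribution $\|\hbeta - \eta\|$ concludes the proof.
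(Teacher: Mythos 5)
Your proposal is correct and follows essentially the same route as the paper's proof: condition on $\eps$, identify $\eta$ as $\prox_h$ applied to a Gaussian perturbation of $\beta^*$, use the $1$-Lipschitzness of the proximal map together with Gaussian concentration around the mean, bridge the $L^1$ and $L^2$ means via the Gaussian Poincar\'e inequality, and finish with the triangle inequality for $\|\hbeta-\eta\|$. The only (harmless) difference is that you track the sign of $\ell'$ explicitly and dispose of it by the symmetry $-Z\stackrel{d}{=}Z$, whereas the paper absorbs it into the definition of $Z$.
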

% The statement $\eta$ is a first order approximation is stated as an assumption since for an arbitrary penalty $h(\cdot)$ a first order approximation may not exist.
Theorem~\ref{thm:ExactRiskBound} is a generalization of Corollary 5.2 of~\cite{javanmard2015biasing} where the result is stated for $h(\beta) = \lambda\|\beta\|_1$
with $\lambda \gtrsim \sigma^*\sqrt{2\log(p)/n}$.
For the case of Lasso, either in its constrained
form with tuning parameter chosen as in \Cref{lemma:penalizedLassoBelongstoCone}
or the penalized Lasso with tuning parameter as in \Cref{lemma:penalizedLassoBelongstoCone}, inequality~\eqref{eq:constrained-lasso-upper} holds
thanks to \eqref{set-T-constrained-lasso} and \Cref{lemma:upper-bound-cone-T-lasso}
for the constrained Lasso, and thanks to \Cref{lemma:upper-bound-cone-T-lasso,lemma:penalizedLassoBelongstoCone} for the penalized Lasso.
Hence for both the constrained and penalized Lasso, if $\Sigma = I_p$
with Gaussian design,
the second term on the right hand side of~\eqref{eq:ExactRiskBound} is
$O_p(\sigma^*/\sqrt n) + O_p(s\log(ep/s)/n)^{1/4})(\|\eta-\beta^*\|+\|\hbeta-\beta^*\|)$.
Hence if $s,n,p\to+\infty$ with $s\log(ep/s)/n\to 0$ and $s/p\to0$ then \eqref{eq:ExactRiskBound}
implies
\begin{equation}\label{eq:ExactRiskSimplified}
\|\hat{\beta} - \beta^*\| = (1 + o_p(1))\mathbb{E}_Z[\|\beta^*-\prox_h(\beta^* + n^{-1/2}\sigma^*Z)
\|^2]^{1/2}.
\end{equation}
For the penalized Lasso, since $\eta$ represents a soft-thresholding operator which can be written in
closed form, Theorem~\ref{thm:ExactRiskBound} allows a refined study of the
risk of $\hat{\beta}$; see~\cite[Theorem 5.1]{candes2006modern}.
Similarly for the Group-Lasso, we have from
Lemmas~\ref{lemma:GroupLassoBelongstoCone} and~\ref{lemma:GroupLassoBoundGamma}
that $\|{\eta} - \hbeta\| = O_p((sd +
s\log(M/s))^{1/4}/n^{1/4})\|\hat{\beta} - \beta^*\|$ (slow rate) which is again
negligible relative to $\|\hbeta - \beta^*\|$ if $(sd + s\log(M/s))/n\to0, s/M\to0$.
Thus,~\eqref{eq:ExactRiskSimplified} again holds true. For the Group-Lasso
$\eta=\prox_h(\beta^* + n^{-1/2}\sigma^*Z)$ represents the Block James-Stein estimator
in the sequence model; see~\cite[Section 2.1]{cai2009data}.

Extending Corollary 5.2 of \cite{javanmard2015biasing} to more general loss/penalty functions,
the above device lets us characterize
the risk $\|\hat\beta-\beta^*\|$:
Up to a multiplicative constant of order $1+o_p(1)$, the risk is the same as 
the risk of the proximal of $h$ in the Gaussian sequence model where one observes
$N(\beta^*, (\sigma^*)^2/n)$.

% Note that for lasso and group lasso penalties, $\|\eta - \beta^*\| \gtrsim n^{-1/2}$ and hence the second term of~\eqref{eq:ExactRiskBound} is negligible compared to the first term.

\section{Application to inference}\label{sec:Inference}

The second application we wish to mention is related to confidence intervals
in the linear model when the squared loss is used
and $X_1,...,X_n$ are iid Gaussian $N(0,\Sigma)$.
Assume that one is interested in constructing a confidence interval
for a specific linear combination $a^T\beta^*$ for some $a\in\R^p$.
Further assume, for simplicity, that $\Sigma$ is known and that $a$
is normalized with $\|\Sigma^{-1/2}a\|=1$. Then previous works
on \emph{de-biasing} \cite{zhang2011statistical,ZhangSteph14,JavanmardM14a,JavanmardM14b,GeerBR14,javanmard2015biasing,bellec_zhang2019debiasing_adjust} suggests, given an estimator 
$\hbeta$ that may be biased, to consider the bias-corrected estimator
$\hat \theta$ defined by
$\hat \theta = a^T\hbeta + \|z_a\|^{-2}z_a^T(y-\X\hbeta)$,
where $y=(Y_1,...,Y_n)$ is the response vector and $\X$ is the design
matrix with rows $X_1,...,X_n$ and $z_a= \X \Sigma^{-1} a \sim N(0,I_n)$ is sometimes
referred to as a score vector for the estimation of $a^T\beta^*$.

\begin{restatable}{proposition}{propositionDebiasing}
    \label{prop:debiasing}
    Assume that $X_1,...,X_n$ are iid $N(0,\Sigma)$ and is independent
    of $\eps=(\eps_1,...,\eps_n)\sim N(0,I_n)$.
    Assume that for some cone $T$ and $r_n = \gamma(T,\Sigma)/\sqrt n$ we have
    \begin{equation}
        \label{assum:alpha-debiasing}
    \P(\|\Sigma^{1/2}(\hbeta-\beta^*)\|+\|\Sigma^{1/2}(\eta-\beta^*)\|\le \C r_n,
\{\eta-\hbeta,\eta-\beta^*,\hbeta-\beta^*\} \subset T)\ge 1-\alpha.
    \end{equation}
    Then for some
    $T_n$ with the $t$-distribution with $n$ degrees-of-freedom,
    with probability $1-\alpha- 4 e^{-nr_n^2/2}$,
    \begin{align}
        \label{de-biasing-conclusion}
        \sqrt n (\hat \theta - a^T\beta^*)
        - T_n
        &= O_p((1+r_n)) \|\Sigma^{1/2}(\eta-\beta^*)\| + O_p(\sqrt n r_n) \|\Sigma^{1/2}(\eta-\hbeta)\|,
        \\&= O_p(r_n(1+r_n)) + O_p(\sqrt n r_n^3).
        \label{de-biasing-conclusion-2}
    \end{align}
\end{restatable}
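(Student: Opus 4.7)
The plan is to split $\sqrt n(\hat\theta - a^T\beta^*)$ into a noise contribution that integrates to $T_n$ plus two remainders, and to bound each remainder separately. Writing
\begin{equation*}
\sqrt n(\hat\theta - a^T\beta^*) \;=\; \sqrt n\|z_a\|^{-2}z_a^T\eps \;+\; R_\eta \;+\; R_\Delta,
\end{equation*}
with $R_\eta := \sqrt n[a^T(\eta-\beta^*) - \|z_a\|^{-2}z_a^T\X(\eta-\beta^*)]$ and $R_\Delta$ the analogue for $\hbeta-\eta$, I would first identify the noise term as exactly $T_n$ in distribution: since $z_a \perp \eps$ are Gaussian, $z_a^T\eps/\|z_a\| \sim N(0,1)$ is independent of $\|z_a\|^2 \sim \chi_n^2$, so $\sqrt n\|z_a\|^{-2}z_a^T\eps = (z_a^T\eps/\|z_a\|)/\sqrt{\|z_a\|^2/n} \sim T_n$.

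Next I would set up the geometry. Let $b := \Sigma^{-1/2}a$ (unit vector by hypothesis), $\tilde X_i := \Sigma^{-1/2}X_i \sim N(0, I_p)$, and decompose $\tilde X_i = z_{a,i} b + W_i$ with $z_{a,i} := \tilde X_i^T b \sim N(0,1)$ and $W_i := (I_p - bb^T)\tilde X_i \sim N(0, I_p - bb^T)$; collecting the $W_i$'s as rows of a matrix $\tilde W$, one has $z_a \perp \tilde W$ and $\tilde W b = 0$. The identity $\X u = \tilde\X \Sigma^{1/2} u = z_a(b^T\Sigma^{1/2}u) + \tilde W \Sigma^{1/2} u$ together with $a^T u = b^T \Sigma^{1/2} u$ gives $a^T u - \|z_a\|^{-2} z_a^T \X u = -\|z_a\|^{-2} z_a^T \tilde W v$ with $v := \Sigma^{1/2} u$. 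Setting $v_\eta := \Sigma^{1/2}(\eta - \beta^*)$ and $v_\Delta := \Sigma^{1/2}(\hbeta - \eta)$, this produces $R_\eta = -\sqrt n\|z_a\|^{-2} z_a^T \tilde W v_\eta$ and $R_\Delta = -\sqrt n\|z_a\|^{-2} z_a^T \tilde W v_\Delta$. For $R_\Delta$, I would apply a uniform Gaussian suprema bound: conditional on $z_a$, $v \mapsto z_a^T \tilde W v$ is a centered Gaussian process with sub-Gaussian increments of variance $\|z_a\|^2 v^T(I_p - bb^T) v \le \|z_a\|^2 \|v\|^2$, so Sudakov--Fernique gives $\E[\sup_{v \in \Sigma^{1/2} T,\,\|v\|\le 1}|z_a^T \tilde W v| \mid z_a] \lesssim \|z_a\|\,\gamma(T,\Sigma)$. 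Combined with the concentration $\|z_a\| \asymp \sqrt n$ and $\gamma(T,\Sigma) = \sqrt n\,r_n$, this yields $|z_a^T \tilde W v_\Delta| = O_p(n r_n)\|v_\Delta\|$ on $\{\hbeta - \eta \in T\}$, hence $|R_\Delta| = O_p(\sqrt n\,r_n)\|v_\Delta\|$, as needed.

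The main obstacle is $R_\eta$: the same uniform argument would only yield $O_p(\sqrt n\,r_n)\|v_\eta\|$, which is a factor $\sqrt n$ too weak. The improvement exploits that, for squared loss, the score reduces to $\X^T \eps = \Sigma^{1/2}\bigl((z_a^T \eps) b + \tilde W^T \eps\bigr)$, so $\eta$ depends on $z_a$ only through the single scalar $s := z_a^T \eps$. I would decompose $z_a = (s/\|\eps\|^2)\eps + \tilde z_a$ with $\tilde z_a := z_a - (s/\|\eps\|^2)\eps \perp \eps$, and condition on $(\eps, \tilde W, s)$: this freezes $v_\eta$, while $\tilde z_a \sim N(0, I_n - \eps\eps^T/\|\eps\|^2)$ remains independent of $v_\eta$. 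Split $z_a^T \tilde W v_\eta = \tilde z_a^T \tilde W v_\eta + (s/\|\eps\|^2) \eps^T \tilde W v_\eta =: A + B$. Conditionally, $A$ is centered Gaussian with variance $\le \|\tilde W v_\eta\|^2$, and a standard restricted-operator-norm/matrix-deviation bound (Gordon/Koltchinskii--Mendelson) gives $\sup_{v \in \Sigma^{1/2}T,\,\|v\|\le 1}\|\tilde W v\|^2 \le C(n + \gamma(T,\Sigma)^2) = O(n)$ since $r_n \le 1$, yielding $|A| = O_p(\sqrt n)\|v_\eta\|$. For $B$, rewrite $\eps^T \tilde W v_\eta = n\, q_\perp^T v_\eta$ with $q_\perp := n^{-1}\tilde W^T \eps$; conditional on $\eps$, $q_\perp \sim N(0, (\|\eps\|^2/n^2)(I_p - bb^T))$ and a Gaussian-width argument gives $\sup_{v \in \Sigma^{1/2}T,\,\|v\|\le 1}|q_\perp^T v| = O_p((\|\eps\|/\sqrt n)\,r_n) = O_p(r_n)$ using $\|\eps\| \asymp \sqrt n$; combined with $|s| = O_p(\|\eps\|)$ this yields $|B| = O_p(\sqrt n\,r_n)\|v_\eta\|$. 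Summing, $|z_a^T \tilde W v_\eta| = O_p(\sqrt n(1+r_n))\|v_\eta\|$, and the prefactor $\sqrt n/\|z_a\|^2 \asymp 1/\sqrt n$ gives $|R_\eta| = O_p(1+r_n)\|v_\eta\|$, which is \eqref{de-biasing-conclusion}. The second display \eqref{de-biasing-conclusion-2} then follows by inserting $\|v_\eta\| \lesssim r_n$ from \eqref{assum:alpha-debiasing} and $\|v_\Delta\| \lesssim r_n^2$ obtained from the fast-rate inequality of \Cref{thm:main} specialized to the squared loss, applicable since $\{\hbeta - \eta,\, \eta - \beta^*,\, \hbeta - \beta^*\} \subset T$.
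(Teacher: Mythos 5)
Your proposal is correct and follows essentially the same route as the paper: the same decomposition into $T_n$ plus two remainders, the same key observation that $\eta$ depends on $z_a$ only through $z_a^T\eps$ (your split $z_a=(s/\|\eps\|^2)\eps+\tilde z_a$ is exactly the paper's $P_\eps z_a+P_\eps^\perp z_a$, and your $\tilde W$ is the paper's $\X Q_a\Sigma^{-1/2}$), and the same Sudakov--Fernique/Gaussian-width and restricted-isometry bounds. The only cosmetic difference is that you bound the rank-one term $B$ via a Gaussian-width argument where the paper uses Cauchy--Schwarz with $\|P_\eps z_a\|=O_p(1)$; both give the stated rate.
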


Because $T_n$ has $t$ distribution with $n$ degrees of freedom, asymptotically $\mathbb{P}(|T_n| \le 1.96) \to 0.95$ and hence from~\eqref{de-biasing-conclusion-2}, we get that $\mathbb{P}(n^{1/2}|\hat{\theta} - a^{\top}\beta^*| \le 1.96) \to 0.95$ if $r_n^3\sqrt n\to0$. Therefore, $[\hat{\theta}-1.96/n^{1/2}, \hat{\theta}+1.96/n^{1/2}]$ represents a $95\%$ confidence interval for $a^{\top}\beta^*.$
Conclusion \eqref{de-biasing-conclusion} is a consequence of \Cref{thm:main}.

\vspace{-0.12in}

\subparagraph{Lasso.} Eq. \eqref{assum:alpha-debiasing} is satisfied for the penalized Lasso for $r_n=\sqrt{s\log(ep/s)/n}$
and the cone $T$ in \eqref{cone-T_0-lasso-sparse},
in situations where $\|\hbeta\|_0\le \tilde C s$ with high probability
as explained in the discussion surrounding \eqref{cone-T_0-lasso-sparse}.
In order to construct confidence interval based on \eqref{de-biasing-conclusion},
the right hand side of \eqref{de-biasing-conclusion-2} needs to converges to 0.
This is the case if $r_n\to 0$ and $\sqrt n r_n^3\to0$.
For the Lasso with $r_n=s\log(ep/s)/n$, this translates to the sparsity
condition $s^3\log(ep/s)^3/n^2 \to 0$, i.e., $s = o(n^{2/3})$ up to logarithmic factors.
Hence the first order expansion results of the present paper lets us
derive de-biasing results for the Lasso beyond the condition $s\lesssim \sqrt n$
required in the early results
\cite{ZhangSteph14,JavanmardM14a,GeerBR14}
on de-biasing (other recent approaches, \cite{javanmard2015biasing,bellec_zhang2019debiasing_adjust} also allow to prove such result beyond $s\lesssim \sqrt n$). Moreover, the above proposition is general and apply to any regularized
estimator such that \eqref{assum:alpha-debiasing} holds, with suitable bounds on
the Gaussian complexity $\gamma(T,\Sigma)$.
For $s\ggg n^{2/3}$, the estimator $\hat\theta$
requires an adjustment for asymptotic normality
in the form a degree-of-freedom adjustment \cite{bellec_zhang2019debiasing_adjust}.

\vspace{-0.12in}

\subparagraph{Group-Lasso.}
If $s$ is the number of non-zero groups,
$r_n=\sqrt{sd+s\log(M/s)}/\sqrt n$ and the condition number of $\Sigma$ is bounded,
then \eqref{assum:alpha-debiasing}
holds thanks to \Cref{prop:SparsityEta}, the last paragraph of
\Cref{subsec:GL} and the risk bound \eqref{risk-bound-eta-GL}.
Here, \eqref{de-biasing-conclusion-2} is $o(1)$ if and only if
$(sd + s\log(M/s))/n^{2/3}\to 0$. This improves the sample size requirement
of \cite{mitra2016benefit}, although $\Sigma$ is assumed known
in \Cref{prop:debiasing}.

\section{Proof sketch of \Cref{thm:main}
    {\quad \small (Detailed proofs are given in \Cref{appendix:proof-sketch-details})}
}
\label{sec:proof-sketch}

\begin{restatable}{theorem}{theoremSketchDeterministic}\label{thm:Deterministic}
Define $\hat{K} := n^{-1}\sum_{i=1}^n \ell''(Y_i,X_i^{\top}\beta^*)X_iX_i^{\top}$.
Under assumption~\ref{eq:LossAssump}, we have

(i) If $\{\hat{\beta}-\beta^*, \eta - \beta^*\}\subseteq T$ then $\|\hat{\beta} - \eta\|_K \lesssim Q_{n,1}^{1/2}\mathcal{E} + B^{1/2}Z_n^{1/2}\mathcal{E}^{3/2}$.

(ii) If $\{\hbeta -\eta, \hbeta-\beta^*, \eta - \beta^*\}\subseteq T$ then $\|\hat{\beta} - \eta\|_K \lesssim Q_{n,2}\mathcal{E} + BZ_n\mathcal{E}^2$,

where
\[
Q_{n,1} = \sup_{u\in T}\,\left|\frac{u^{\top}\hat{K}u}{\|u\|_K^2} - 1\right|,\; Q_{n,2} = \sup_{u,v\in T}\,\frac{|u^{\top}(\hat{K} - K)v|}{\|u\|_K\|v\|_K}\;\;\mbox{and}\;\; Z_n = \sup_{u\in T}\frac{1}{n}\sum_{i=1}^n \frac{|X_i^{\top}u|^3}{\|u\|_K^3}.
\]
\end{restatable}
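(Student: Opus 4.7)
The plan is to exploit the two optimization characterizations simultaneously: $\hbeta$ minimizes $F(\beta) := n^{-1}\sum_i \ell(Y_i, X_i^\top\beta) + h(\beta)$, while $\eta$ minimizes the quadratic surrogate $G(\beta) := n^{-1}\sum_i \ell'(Y_i, X_i^\top\beta^*) X_i^\top(\beta - \beta^*) + \tfrac12(\beta-\beta^*)^\top K(\beta-\beta^*) + h(\beta)$. The quadratic part of $G$ makes it $1$-strongly convex with respect to $\|\cdot\|_K$, yielding $\tfrac12\|\hbeta - \eta\|_K^2 \le G(\hbeta) - G(\eta)$; combined with $F(\hbeta) - F(\eta) \le 0$ from optimality of $\hbeta$, these sum to the excess-comparison master inequality
\begin{equation*}
    \tfrac12\|\hbeta - \eta\|_K^2 \;\le\; D(\hbeta) - D(\eta), \qquad D := G - F.
\end{equation*}
This will drive case~(i). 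For case~(ii) I would instead use the KKT relations: the subgradients $\hat z := -n^{-1}\sum_i \ell'(Y_i,X_i^\top\hbeta) X_i \in \partial h(\hbeta)$ and $\tilde z := -n^{-1}\sum_i \ell'(Y_i,X_i^\top\beta^*) X_i - K(\eta - \beta^*) \in \partial h(\eta)$ satisfy $(\hat z - \tilde z)^\top(\hbeta - \eta) \ge 0$ by convexity of $h$.

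For case~(i), I apply Taylor's theorem in integral-remainder form to $\ell(Y_i, X_i^\top\beta)$ around $X_i^\top\beta^*$ to obtain $D(\beta) - D(\beta^*) = \tfrac12 (\beta - \beta^*)^\top (K - \bar K(\beta))(\beta - \beta^*)$, where $\bar K(\beta) := 2\int_0^1 (1-t)\hat K_t(\beta)\,dt$ with $\hat K_t(\beta) := n^{-1}\sum_i \ell''(Y_i, X_i^\top\beta^* + t X_i^\top(\beta - \beta^*)) X_i X_i^\top$. Splitting $K - \bar K(\beta) = (K - \hat K) + (\hat K - \bar K(\beta))$ decomposes $D(\hbeta) - D(\eta)$ into a first-order quadratic piece and a Lipschitz remainder. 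Under the hypothesis of~(i), both $\hbeta - \beta^*$ and $\eta - \beta^*$ lie in $T$, so the first piece is controlled diagonally by $Q_{n,1}$, yielding $\lesssim Q_{n,1}\mathcal{E}^2$. The Lipschitz bound $|\ell''(y,u_1) - \ell''(y,u_2)| \le B|u_1 - u_2|$ gives $|(\beta-\beta^*)^\top(\hat K - \bar K(\beta))(\beta-\beta^*)| \le \tfrac{B}{3} n^{-1}\sum_i |X_i^\top(\beta-\beta^*)|^3$ (after integrating $2(1-t)t$ over $[0,1]$), which by the definition of $Z_n$ is $\le \tfrac{B}{3} Z_n \|\beta-\beta^*\|_K^3 \le \tfrac{B}{3} Z_n \mathcal{E}^3$. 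Summing the two contributions (for $\beta=\hbeta$ and $\beta=\eta$) and square-rooting yields (i).

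For case~(ii), plugging the Taylor expansion $\ell'(Y_i, X_i^\top\hbeta) - \ell'(Y_i, X_i^\top\beta^*) = \ell''(Y_i, X_i^\top\beta^*) X_i^\top(\hbeta - \beta^*) + r_i$ with $|r_i| \le \tfrac{B}{2}(X_i^\top(\hbeta-\beta^*))^2$ into $(\hat z - \tilde z)^\top(\hbeta - \eta) \ge 0$ and rearranging around $\hat K = n^{-1}\sum_i \ell''(Y_i, X_i^\top\beta^*) X_i X_i^\top$ produces the sharper identity
\begin{equation*}
    \|\hbeta - \eta\|_K^2 \;\le\; (\hbeta - \beta^*)^\top (K - \hat K)(\hbeta - \eta) \;-\; n^{-1}\sum_i r_i \, X_i^\top(\hbeta - \eta).
\end{equation*}
Since~(ii) supplies $\hbeta - \eta \in T$ too, the first term is $\le Q_{n,2} \|\hbeta - \beta^*\|_K \|\hbeta - \eta\|_K \le Q_{n,2} \mathcal{E}\|\hbeta - \eta\|_K$. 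For the remainder, H\"older's inequality with exponents $3/2$ and $3$ combined with the $Z_n$-bound applied separately to $\hbeta-\beta^*$ and $\hbeta - \eta$ gives $n^{-1}\sum_i |r_i||X_i^\top(\hbeta - \eta)| \lesssim B Z_n \|\hbeta - \beta^*\|_K^2 \|\hbeta - \eta\|_K \le B Z_n \mathcal{E}^2 \|\hbeta - \eta\|_K$; dividing by $\|\hbeta - \eta\|_K$ yields (ii).

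The principal obstacle lies in case~(i): the subgradient route naturally produces the bilinear form $(\hbeta - \beta^*)^\top (K - \hat K)(\hbeta - \eta)$, which cannot be controlled without the supplementary hypothesis $\hbeta - \eta \in T$ (equivalently, without upgrading to $Q_{n,2}$). The remedy is to abandon subgradients and instead use the excess-loss gap $D(\hbeta) - D(\eta)$ driven by strong convexity of $G$: the Taylor expansion then generates only quadratic forms $u^\top(K - \hat K)u$ with $u \in T$, which is exactly what $Q_{n,1}$ does bound. The price is the square root $Q_{n,1}^{1/2}$ in~(i) rather than the linear $Q_{n,2}$ in~(ii). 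The remaining ingredients---Taylor expansion in integral form, the split of $K - \bar K(\beta)$, and H\"older bounds on cubic remainders---are routine.
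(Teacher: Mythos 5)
Your argument is correct. Part (i) is essentially the paper's own proof: the same master inequality $\tfrac12\|\hbeta-\eta\|_K^2\le D(\hbeta)-D(\eta)$ obtained by adding strong convexity of the surrogate objective to optimality of $\hbeta$, the same integral-remainder Taylor expansion splitting the gap into a $(K-\hat K)$-quadratic form (bounded diagonally by $Q_{n,1}$) and a cubic Lipschitz remainder (bounded by $BZ_n$), followed by a square root. Part (ii), however, takes a genuinely different route. The paper stays inside the same master inequality and sharpens both pieces by exploiting the \emph{difference} structure: it polarizes $(\eta-\beta^*)^{\top}(\hat K-K)(\eta-\beta^*)-(\hbeta-\beta^*)^{\top}(\hat K-K)(\hbeta-\beta^*)$ into bilinear forms with one argument equal to $\eta-\hbeta$ (hence $Q_{n,2}$), and uses the Lipschitz property $|a_i(\beta)-a_i(\alpha)|\le B|X_i^{\top}(\beta-\alpha)|$ to extract a factor $|X_i^{\top}(\eta-\hbeta)|$ from the cubic terms. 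You instead discard the master inequality and use first-order optimality: monotonicity of $\partial h$ applied to the two KKT subgradients, plus a Taylor expansion of $\ell'$, yields $\|\hbeta-\eta\|_K^2\le(\hbeta-\beta^*)^{\top}(K-\hat K)(\hbeta-\eta)-n^{-1}\sum_i r_iX_i^{\top}(\hbeta-\eta)$ directly, after which $Q_{n,2}$ and a H\"older/$Z_n$ bound finish the job. Both are valid (the subdifferential sum rule and monotonicity hold for any proper convex $h$, including the indicator of the $\ell_1$-ball). Your route is arguably cleaner for (ii) and even slightly stronger, since it only uses $\{\hbeta-\beta^*,\hbeta-\eta\}\subseteq T$ and never needs $\eta-\beta^*\in T$; the paper's route has the advantage of deriving both parts from a single inequality without invoking KKT conditions. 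Your diagnosis of why (i) cannot reach the linear-in-$Q_{n,2}$ rate without the extra inclusion $\hbeta-\eta\in T$ is also accurate.
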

\Cref{thm:Deterministic} follows from the strong convexity of the objective
function of $\eta$ with respect to the norm $\|\cdot\|_K$
(cf. for instance, Lemma 1 of~\cite{bellec2018prediction}) combined with
Taylor expansions of the loss $\ell$.
Next, to prove Theorem~\ref{thm:main}, it remains to bound $Q_{n,1}(T), Q_{n,2}(T)$ and $Z_n(T)$. The quadratic processes $Q_{n,1}(T), Q_{n,2}(T)$ and cubic process $Z_n(T)$ can be bounded in terms of $\gamma(T, \Sigma)$ using
generic chaining results,
Theorem 1.13 of~\citet{mendelson2016upper} and Eq. (3.9) of~\cite{mendelson2010empirical},
% Similarly bounds on cubic process $Z_n(T)$ can be bounded using results of~\cite{mendelson2010empirical}. 
as follows.
\begin{restatable}{proposition}{propositionControlProc}[Control of $Q_{n,1}, Q_{n,2}$ and $Z_n$]\label{prop:ControlProc}
Under assumptions~\ref{eq:LossAssump} and~\ref{eq:SubGaussianAssump}, we have 

(i) With probability $1 - 2\exp(-\C t^2\gamma^2(T,\Sigma))$,
\[
    \textstyle
\max\{Q_{n,1}(T), Q_{n,2}(T)\} \le \C B_2B_3L^2\left(tn^{-1/2}\gamma(T,\Sigma) + t^2n^{-1}\gamma^2(T,\Sigma)\right).
\]
(ii) With probability $1 - 2\exp(-\C t\log n)$, \;\;
$
Z_n(T) \le \C B_3^{3/2}L^3\left(1 + n^{-1}\gamma^3(T,\Sigma)\right)t^3.
$
\end{restatable}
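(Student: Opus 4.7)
The plan is to reduce the quadratic processes $Q_{n,1}, Q_{n,2}$ to centered product empirical processes handled by Theorem~1.13 of~\cite{mendelson2016upper}, and to treat the cubic process $Z_n$ by separating its expectation from its deviation and combining a uniform envelope bound with a $\psi_2$-type empirical process inequality. First, because every quantity in the statement is $0$-homogeneous in its index vectors, I would restrict all three suprema to the normalized slice $T^\star := \{u\in T: \|u\|_K = 1\}$. Writing $a_i := \sqrt{\ell''(Y_i,X_i^{\top}\beta^*)} \in [0,\sqrt{B_2}]$ by~\ref{eq:LossAssump}, the linear form $u\mapsto a_iX_i^{\top}u$ is $L\sqrt{B_2}$-subGaussian w.r.t.\ $\|\cdot\|_\Sigma$, and on $T^\star$
\[
Q_{n,1}(T) = \sup_{u\in T^\star}\Bigl|\tfrac{1}{n}\sum_{i=1}^n (a_iX_i^{\top}u)^2 - 1\Bigr|,\;\; Q_{n,2}(T) = \sup_{u,v\in T^\star}\Bigl|\tfrac{1}{n}\sum_{i=1}^n (a_iX_i^{\top}u)(a_iX_i^{\top}v) - u^{\top}Kv\Bigr|.
\]

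Next, I would apply the product empirical process inequality of~\cite{mendelson2016upper} to $Q_{n,2}$ with index set $T^\star\times T^\star$, which yields a high-probability bound
\[
Q_{n,2}(T)\;\lesssim\; L^2 B_2\Bigl[n^{-1/2}\,d_{T^\star}\,\gamma_2(T^\star,\|\cdot\|_\Sigma) + n^{-1}\,\gamma_2(T^\star,\|\cdot\|_\Sigma)^2\Bigr],
\]
with $d_{T^\star}=\sup_{u\in T^\star}\|u\|_\Sigma$, and $Q_{n,1}\le Q_{n,2}$ as a special case. The norm comparison $\|u\|_\Sigma^2\le B_3\|u\|_K^2$ from~\eqref{eq:ProcessControlAssumption} gives $d_{T^\star}\le\sqrt{B_3}$; Talagrand's majorizing measure theorem gives $\gamma_2(T^\star,\|\cdot\|_\Sigma)\asymp\mathbb{E}[\sup_{u\in T^\star}g^{\top}\Sigma^{1/2}u]$ for $g\sim N(0,I_p)$, and factoring $\|\Sigma^{1/2}u\|\le\sqrt{B_3}$ out of the normalized Gaussian complexity yields $\gamma_2(T^\star,\|\cdot\|_\Sigma)\lesssim\sqrt{B_3}\,\gamma(T,\Sigma)$. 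Substituting produces the advertised $B_2B_3L^2\bigl(tn^{-1/2}\gamma(T,\Sigma)+t^2n^{-1}\gamma^2(T,\Sigma)\bigr)$ bound with tail $2\exp(-Ct^2\gamma^2(T,\Sigma))$.

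Finally, for $Z_n$ I would split $Z_n(T)\le\sup_{u\in T^\star}n^{-1}\sum_i\mathbb{E}|X_i^{\top}u|^3 + \sup_{u\in T^\star}|n^{-1}\sum_i(|X_i^{\top}u|^3-\mathbb{E}|X_i^{\top}u|^3)|$. SubGaussian moment bounds combined with $\|u\|_\Sigma^3\le B_3^{3/2}$ on $T^\star$ control the first term by $CL^3B_3^{3/2}$, producing the constant ``$1$'' contribution. For the deviation I would control the random envelope $M:=\sup_{u\in T^\star,\,i\le n}|X_i^{\top}u|$ by Gaussian concentration applied to each marginal together with a union bound over $i\le n$, yielding $M\lesssim L\sqrt{B_3}(\gamma(T,\Sigma)+\sqrt{\log n})$ with probability at least $1-2e^{-C\log n}$. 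On this event $|X_i^{\top}u|^3\le M^2|X_i^{\top}u|$, reducing the deviation to a linear $\psi_2$-empirical process in $u$, for which equation~(3.9) of~\cite{mendelson2010empirical} contributes the $\gamma^3/n$ term after absorbing $M^2$. The main obstacle lies in this step: $|X_i^{\top}u|^3$ has $\psi_{2/3}$ Orlicz tails, so a direct one-shot chaining argument would not give clean constants; the envelope-then-linearize strategy pays for its $\psi_2$ reduction with the $\log n$ factor in the tail $e^{-Ct\log n}$ and the cubic polynomial growth in $t$, and carefully tracking constants through the conditioning on the envelope event is where most of the technical work concentrates, while the quadratic-process bounds follow almost directly from Mendelson's product process theorem and the norm comparison in~\ref{eq:LossAssump}.
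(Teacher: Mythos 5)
Your treatment of $Q_{n,1}$ and $Q_{n,2}$ is essentially the paper's argument: both reduce to the product empirical process bound of Theorem~1.13 of \cite{mendelson2016upper} over classes of normalized linear forms, with the norm comparison $\|u\|_\Sigma^2\le B_3\|u\|_K^2$ supplying the diameters and $\gamma_2$-functionals. One caveat: your symmetric factoring $a_i=\sqrt{\ell''(Y_i,X_i^\top\beta^*)}$ silently assumes $\ell''\ge 0$, which \ref{eq:LossAssump} does not grant (it only bounds $|\ell''|\le B_2$). The paper avoids this by placing the entire factor $\ell''$ into one of the two classes ($H$) and leaving the other ($F$) as plain linear forms, which is why it can invoke Theorem~1.13 rather than the nonnegative-square version; it explicitly remarks that the square-root route would require convexity of $\ell$ in its second argument. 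This is easily repaired, so I would not call it a gap.

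Part (ii) is where your argument genuinely fails. The paper applies Eq.~(3.9) of \cite{mendelson2010empirical} directly to the cubic process with $q=3$ and $|I|=n$, obtaining $Z_n(T)\lesssim t^3 n^{-1}\bigl(\gamma(F)+n^{1/3}\mathrm{diam}(F)\bigr)^3\lesssim B_3^{3/2}L^3 t^3(1+\gamma^3(T,\Sigma)/n)$. Your envelope-then-linearize strategy cannot recover this. The envelope $M=\sup_{u\in T^\star,\,i\le n}|X_i^\top u|$ is genuinely of order $L\sqrt{B_3}\,\gamma(T,\Sigma)$ (it is attained by the worst pair $(u,i)$ simultaneously), so after writing $|X_i^\top u|^3\le M^2|X_i^\top u|$ and bounding the remaining linear process by a quantity that is at least of constant order, your deviation term is at best of order $L^3B_3^{3/2}\,\gamma^2(T,\Sigma)$. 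In the regime of interest — $\gamma(T,\Sigma)\to\infty$ with $\gamma^3(T,\Sigma)/n\to 0$, e.g.\ $\gamma(T,\Sigma)\asymp n^{1/4}$ — the stated bound is $O(1)$ while yours is $\asymp\sqrt n$. The loss comes precisely from discarding the fact that only a few indices $i$ can realize a large value of $|X_i^\top u|$ for the supremizing $u$; that combinatorial control is what the $q$-th moment chaining bound in Eq.~(3.9) of \cite{mendelson2010empirical} provides and what a uniform envelope destroys. To close the gap you should apply that inequality directly to the class $F$ with $q=3$, as the paper does, rather than reducing to a linear process.
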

%It is clear that $\max\{Q_{n,1}, Q_{n,2}\} = o_p(1)$ if $\gamma(T,\Sigma) = o(\sqrt{n})$ but $Z_n = O_p(1)$ requires $\gamma(T,\Sigma) = O(n^{1/3})$. Note that all the results above do NOT require convexity of $\ell(\cdot, \cdot)$ but only require positive definiteness of $K$.
% Processes, deterministic inequalities, high probability control of the processes.
% \begin{align}
%     \{\delta,\Delta\}\subset T 
%     &\Rightarrow
%     \|\hbeta - \eta\|_K \le Q_{n,1}^{1/2}\mathcal{E} + B^{1/2} Z_n(T)^{1/2}\mathcal{E}^{3/2},
%     \\
%     \{\hbeta-\eta,\delta,\Delta\}\subset T
%     &\Rightarrow
%     \|\hbeta - \eta\|_K \le Q_{n,2}(T) \mathcal{E} + B Z_n(T) \mathcal{E}^2,
% \end{align}

% \cite{bellec2018nb_lsb}

\newpage

\bibliographystyle{plainnat}
\bibliography{biblio}

\newpage
\appendix

\section*{SUPPLEMENT}

\begin{table}[h]
\begin{tabular}{@{}|l|l|l|@{}}
\toprule
                         & Lasso  & Group-Lasso, $M$ groups of size $d=p/M$  \\ \midrule
    Tuning parameter & $\lambda \gtrsim [\frac{2}{n}\log\frac{p}{s}]^{\frac 1 2}$ in \eqref{h-penalized-lasso} & $\lambda \gtrsim[d+\frac{2}{n}\log\frac{M}{s}]^{\frac 1 2}$ in \eqref{h-GL} \\ \midrule
                     & $\|\hat\beta-\beta^*\|_{\Sigma}\lesssim r_n$  & $\|\hat\beta-\beta^*\|_{\Sigma}\lesssim r_n$
    \\
    Minimax rate $r_n$             & $r_n =[\frac{2s}{n}\log\frac{p}{s}]^{\frac 1 2}$  & $[\frac{d}{n}+\frac{s}{n}\log\frac{M}{s}]^{\frac 1 2 }$
 \\ \midrule
    Gaussian width bound             &
    $\gamma(T, \Sigma)\lesssim [s\log\frac p s]^{1/2}$ 
    & 
    $\gamma(T, \Sigma)\lesssim [sd + s\log\frac M s]^{1/2}$ 
    by \Cref{lemma:GroupLassoBoundGamma}
 \\ \midrule
    Restricted Eigenvalue (RE)              & \multicolumn{2}{l|}{
        $\|\eta-\hat\beta\|_{\Sigma}\lesssim r_n^{3/2}$

        by \eqref{eq:square-loss-slow-rate} and
        \Cref{lemma:penalizedLassoBelongstoCone} (Lasso) or
        \Cref{lemma:GroupLassoBelongstoCone} (GL)

    } \\ \midrule
    $\|\Sigma\|_{op}\vee \|\Sigma^{-1}\|_{op}\le C$                 & \multicolumn{2}{l|}{
        $\|\eta-\hat\beta\|_{\Sigma}\lesssim r_n^{2}$

        by \eqref{eq:square-loss-fast-rate},
        \Cref{prop:SparsityEta}
    } \\ \bottomrule
\end{tabular}
\caption{Summary of rates for $\|\hbeta-\beta^*\|_\Sigma$ and $\|\eta-\hbeta\|_\Sigma$ for
the squared loss. For the Lasso, $s$ is the sparsity of $\beta^*$ while for
the Group-Lasso $s$ is the number of non-zero groups in $\beta^*$.
If $r_n\to 0$ then $\|\eta-\hbeta\|_\Sigma$ is an order of magnitude smaller than
$\|\hbeta-\beta^*\|_\Sigma$ and the minimax rate.
In this table $\gtrsim$ may hide constants depending on the subgaussian parameter $L$
as well as restricted eigenvalues of $\Sigma$, denoted by $\phi(T)$ in the
paper.
\label{table:squared}
}
\end{table}

\begin{table}[!h]
\begin{tabular}{@{}|l|l|l|@{}}
\toprule
                         & Lasso  & Group-Lasso, $M$ groups of size $d=\frac{p}{M}$  \\ \midrule
    Tuning parameter & $\lambda \gtrsim [\frac{2}{n}\log\frac{p}{s}]^{\frac 1 2}$ in \eqref{h-penalized-lasso} & $\lambda \gtrsim[d+\frac{2}{n}\log\frac{M}{s}]^{\frac 1 2}$ in \eqref{h-GL} \\ \midrule
             & $\|\hat\beta-\beta^*\|_K\lesssim r_n$  & $\|\hat\beta-\beta^*\|_K\lesssim r_n$
    \\
    Minimax rate $r_n$             & $r_n =[\frac{2s}{n}\log\frac{p}{s}]^{\frac 1 2}$
    (Prop. \ref{prop:RateLogisticLasso-mainpaper})
                                   & $[\frac{d}{n}+\frac{s}{n}\log\frac{M}{s}]^{\frac 1 2 }$
 \\ \midrule
    Gaussian width bound             &
    $\gamma(T, \Sigma)\lesssim [s\log\frac p s]^{1/2}$ 
    & 
    $\gamma(T, \Sigma)\lesssim [sd + s\log\frac M s]^{1/2}$ 
    by \Cref{lemma:GroupLassoBoundGamma}
\\ \midrule
    $RSC$ (cf. \Cref{sec:RSCLogistic})                 & \multicolumn{2}{l|}{
        $\|\eta-\hat\beta\|_K\lesssim r_n^{3/2}(1+ r_n^3\sqrt n)$

        by \eqref{eq:slow-rate-logistic} and
        \Cref{lemma:penalizedLassoBelongstoCone} or
        \Cref{lemma:GroupLassoBelongstoCone}

    } \\ \midrule
    $\|K\|_{op}\vee \|K^{-1}\|_{op}\le C$             & \multicolumn{2}{l|}{
        $\|\eta-\hat\beta\|_K\lesssim r_n^{2}(1+ r_n^3\sqrt n)$

        by \eqref{eq:fast-rate-logistic} and
        \Cref{prop:SparsityEta}
    } \\ \bottomrule
\end{tabular}
\caption{Summary of rates for $\|\hbeta-\beta^*\|_K$ and $\|\eta-\hbeta\|_K$ for
the logistic loss. For the Lasso, $s$ is the sparsity of $\beta^*$ while for
the Group-Lasso $s$ is the number of non-zero groups in $\beta^*$.
If $r_n\to 0$ as well as $r_n^3\sqrt n\to 0$
then $\|\eta-\hbeta\|_K$ is an order of magnitude smaller than
$\|\hbeta-\beta^*\|_K$ and the minimax rate.
In this table $\gtrsim$ may hide constants depending on the subgaussian parameter $L$,
the constants $B_3$
and Restricted Strong Convexity (RSC) constants.
\label{table:logistic}
}
\end{table}

\paragraph{Proofs.}
All Theorems, Lemmas and Propositions from the submission are proved in the
present supplement.  The results are restated before their proofs for
convenience.

\section{Proofs of \Cref{sec:4-examples-of-cones-T}}
\lemmaUpperBoundConeTLasso*
\begin{proof}[Proof of \Cref{lemma:upper-bound-cone-T-lasso}]
    This is a consequence of \Cref{lemma:GroupLassoBoundGamma} proved below,
    by taking $p$ groups of size $d=1$, i.e., the groups are
    $G_j=\{j \}$ for each $j=1,...,p$ and $M=p$.
    The condition $\max_{k=1,...,M}\|\Sigma_{G_k,G_k}\|_{op}\le 1$
    necessary to apply \Cref{lemma:GroupLassoBelongstoCone} is equivalent
    to the normalization \ref{eq:Normalized}.
\end{proof}

\lemmaRiskConstrainedLasso*
\begin{proof}[Proof of \Cref{lemma:risk-constrained-lasso}]
    Since $R=\|\beta^*\|_1$, the inclusion \eqref{set-T-constrained-lasso}
    holds by the triangle inequality, i.e., we have $\hbeta-\beta^*\in T$
    as well as $\eta-\beta^*\in T$.

    Next, we first bound the loss of $\eta$.
    The optimization problem \eqref{eq:FirstOrderApproxRegularized}
    for the squared loss for the penalty \eqref{eq:penalty-constrained-lasso}
    can be rewritten as
    $$\eta = \argmin_{\beta\in\R^p: \|\beta\|_1\le R} \frac 1 2 \|\Sigma^{1/2}(\beta-\beta^*) - n^{-1/2}Z \|^2, 
    \qquad \text{ where }\qquad 
    Z = \frac{1}{\sqrt n}\sum_{i=1}^n \eps_i \Sigma^{-1/2} X_i 
    .
    $$
    By optimality of $\eta$ for the above optimization problem,
    we have (see, e.g., the properties of convex projections in \cite{bellec2018sharp})
    that
    $$\|\Sigma^{1/2}(\eta-\beta^*)\| 
    \le \frac{1}{\sqrt n} \frac{(\Sigma^{1/2}(\eta-\beta^*))^T Z}{\|\Sigma^{1/2}(\eta-\beta^*)\|} 
    \le \frac{\sigma^*}{\sqrt n} \sup_{u\in T:\|\Sigma^{1/2}u\|=1} u^T\Sigma^{1/2} Z/\sigma^*.$$

    Next, notice that $Z/\sigma^*$ is $L$-subgaussian because for any $u\in\R^p$,
    by independence,
    \begin{equation}
        \label{eq:Z-is-subgaussian-squared-loss}
    \E[\exp(u^TZ)]
    = \prod_{i=1}^n \E[e^{n^{-1/2}\eps_i X_i^T\Sigma^{-1/2}u}]
    \le \prod_{i=1}^n e^{n^{-1}\eps_i^2 L^2 \|u\|^2 / 2} 
    = e^{(\sigma^*)^2 L^2 \|u\|^2/2}.
    \end{equation}
    By a tail bound on suprema of subGaussian processes, we obtain
    that with probability at least $1-e^{-t^2}$, inequality
    $\sup_{u\in T:\|\Sigma^{1/2}u\|=1} u^T\Sigma^{1/2} Z/\sigma^*
    \le \Cl{abs-proof}(
    \gamma(T,\Sigma) + t)
    $ holds for some absolute constant $\Cr{abs-proof}$.
    We have proved in \Cref{lemma:upper-bound-cone-T-lasso} that
    $\gamma(T,\Sigma) \le \C \phi(T)^{-1} \sqrt{s\log(2p/s)}$
    for another absolute constant.
    The choice $t=\phi(T)^{-1} \sqrt{s\log(2p/s)} = r_n\sqrt n$ completes the proof
    for $\eta$.

    We now prove the bound for $\hbeta$. For the squared loss in the linear model,
    $$\hbeta = \argmin_{\beta\in\R^p: \|\beta\|_1\le R} \|\X(\beta-\beta^*) -
    \eps\|^2/(2n)$$
    where $\X$ is the design matrix with rows $X_1,...,X_n$
    and $\eps=(\eps_1,...,\eps_n)$.
    The optimality conditions of the above optimization problem yields that
    $$\frac{\|\X(\hbeta-\beta^*)\|^2}{n \|\Sigma^{1/2}(\hbeta-\beta^*)\|}
    \le \frac{ \eps^T \X (\hbeta-\beta^*)}{n \|\Sigma^{1/2}(\hbeta-\beta^*)\|}
    = \frac{1}{\sqrt n}
    \frac{(\Sigma^{1/2}(\hbeta-\beta^*))^T  Z}{\|\Sigma^{1/2}(\hbeta-\beta^*)\|} 
\le \frac{\sigma^*}{\sqrt n} \sup_{u\in T:\|\Sigma^{1/2}u\|=1}\frac{u^T\Sigma^{1/2} Z}{\sigma^*}.$$
    We have already bounded in the previous paragraph
    the supremum in the right hand side with probability at least $1-e^{-nr_n^2}$.
    It remains to show that the left hand side is larger than
    $\|\Sigma^{1/2}(\hbeta-\beta^*)\|(1-\C r_n)$ with high probability.
    Since $\hbeta-\beta^*\in T$, an application of \cite{plan_vershynin_liaw2017simple}
    to the set $(\Sigma^{1/2}T) \cap \{v\in\R^p : \|v\|=1\}$
    yields that, with probability at least $1-2e^{-r_n^2n}$,
    $$\Big| \frac{\|\X(\hbeta-\beta^*)\|}{\|\Sigma^{1/2}(\hbeta-\beta^*)\|} - \sqrt n  \Big|
    \le \C( \gamma(T,\Sigma) + \sqrt n r_n )
    \le \C \sqrt n r_n.
    $$
    In the same event, we have
    $\|\X(\hbeta-\beta^*)\|^2/n \ge (1-\C r_n)^2 \|\Sigma^{1/2}(\hbeta-\beta^*)\|^2$
    and the proof is complete.
\end{proof}

The following Lemma will be useful.

\begin{lemma}\label{lem:LemmaA1}
The following, (i) in the linear model and (ii) in the logistic model, hold
for any convex penalty $h$.

    (i)
    Consider the linear model \eqref{eq:linear-model}, assume~\ref{eq:SubGaussianAssump} and assume that $(\eps_1,...,\eps_n)$ is independent of $(X_1,...,X_n)$
            and let $(\sigma^*)^2 = (1/n)\sum_{i=1}^n\eps_i^2$.
            Then almost surely,
            \begin{equation}
                \{\eta,\hbeta\} \subset \hat T = \{b\in\R^p: \sqrt{n}(h(b)-h(\beta^*))\le Z^T\Sigma^{1/2}(b-\beta^*)\}
                \label{set-hat-T-linear-model-squared-loss}
            \end{equation}
            where $Z$ is an $L(\sigma^*)$-subgaussian vector in the sense
            that $\E\exp(u^TZ) \le \exp(L^2(\sigma^*)^2 \|u\|^2/2)$.

    (ii)
    Consider the logistic model and assume~\ref{eq:SubGaussianAssump}.
            Then almost surely
            $$\{\eta,\hbeta\} \subset \tilde T 
            = \{b\in\R^p: \sqrt n ( h(b)-h(\beta^*) )\le \tilde Z^T\Sigma^{1/2}(b-\beta^*)\}$$
            where $\tilde Z$ is an $L/2$-subgaussian vector in the sense that
            $\E\exp(u^T\tilde Z) \le \exp((L/2)^2 \|u\|^2/2)$.
\end{lemma}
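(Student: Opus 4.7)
The plan is to derive both parts from the single optimality+convexity argument already displayed in~\eqref{optimality-additive-penalty}, then identify the right-hand side as a subGaussian linear form in a specific random vector. Let $f_n(\beta) := n^{-1}\sum_{i=1}^n \ell(Y_i,X_i^\top \beta)$ and let $g_n$ be the convex function with $\eta = \argmin_\beta\{g_n(\beta)+h(\beta)\}$ from~\eqref{eq:FirstOrderApproxRegularized}. Because the quadratic part of $g_n$ vanishes at $\beta^*$, one has the key identity
\[
\nabla g_n(\beta^*) \;=\; n^{-1}\sum_{i=1}^n \ell'(Y_i,X_i^\top\beta^*)X_i \;=\; \nabla f_n(\beta^*).
\]
The chain in~\eqref{optimality-additive-penalty} combining optimality of $\hbeta,\eta$ with convexity of $f_n,g_n$ then yields $h(b)-h(\beta^*) \le -\nabla f_n(\beta^*)^\top(b-\beta^*)$ for both $b\in\{\hbeta,\eta\}$, or equivalently $\sqrt n(h(b)-h(\beta^*)) \le -\sqrt n\,\nabla f_n(\beta^*)^\top(b-\beta^*)$. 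The entire proof now reduces to rewriting this right-hand side as $Z^\top\Sigma^{1/2}(b-\beta^*)$ (resp.\ $\tilde Z^\top\Sigma^{1/2}(b-\beta^*)$) and verifying the asserted subGaussian MGF bound.

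For part~(i) the squared loss gives $\ell'(Y_i, X_i^\top\beta^*)=-\eps_i$, so $Z := n^{-1/2}\sum_i\eps_i\Sigma^{-1/2}X_i$ produces the desired rewrite. The $L\sigma^*$-subGaussianity of $Z$ is the one-line conditional MGF computation~\eqref{eq:Z-is-subgaussian-squared-loss} already performed in the proof of~\Cref{lemma:risk-constrained-lasso}: by independence of $\eps$ and $X$, condition on $\eps$, apply the subGaussian bound~\eqref{assum:subgaussian} to each $X_i$ with the vector $n^{-1/2}\eps_i\Sigma^{-1/2}u$, and multiply the resulting factors to obtain $\exp(L^2(\sigma^*)^2\|u\|^2/2)$. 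I would simply reproduce this calculation verbatim, since there is no interaction with the form of $h$.

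For part~(ii) the logistic loss yields $\ell'(Y_i, X_i^\top\beta^*) = -\xi_i$ with $\xi_i := Y_i-\P(Y_i=1\mid X_i)$, which is centered given $X_i$ and bounded in $[-1,1]$, so $\tilde Z := n^{-1/2}\sum_i\xi_i\Sigma^{-1/2}X_i$ plays the role of $Z$. The hard part is the subGaussian bound on $\tilde Z$, because $\xi_i$ is no longer independent of $X_i$, so the conditioning of part~(i) cannot be reused. I would reverse the conditioning: given $X_1,\dots,X_n$, the $\xi_i$'s are conditionally independent, each bounded and centered, so Hoeffding's lemma gives $\E[\exp(a\xi_i)\mid X_i]\le \exp(a^2/8)$ for any $a\in\R$. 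Taking $a=n^{-1/2}u^\top\Sigma^{-1/2}X_i$ and multiplying across $i$ yields
\[
\E[\exp(u^\top \tilde Z)\mid X_1,\dots,X_n] \;\le\; \exp\Bigl((8n)^{-1}\sum_{i=1}^n (u^\top\Sigma^{-1/2}X_i)^2\Bigr).
\]
Taking the outer expectation and controlling the sub-exponential quadratic form via the $L$-subGaussianity of $X_i$ (each scalar $u^\top\Sigma^{-1/2}X_i$ is $L\|u\|$-subGaussian, hence its square is sub-exponential with parameter $L^2\|u\|^2$) would produce the claimed bound $\E\exp(u^\top\tilde Z)\le \exp((L/2)^2\|u\|^2/2)$. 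The most delicate step is the constant bookkeeping in this last line: the factor $1/8$ from Hoeffding (reflecting that $\xi_i$ is $\tfrac12$-subGaussian rather than just bounded by $1$) is exactly what yields the parameter $L/2$ rather than $L$ in the final bound.
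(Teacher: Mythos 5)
Your reduction of both parts to \eqref{optimality-additive-penalty} together with the identity $\nabla g_n(\beta^*)=\nabla f_n(\beta^*)$ is exactly the paper's argument, and your part (i) --- conditioning on $\eps$ and reproducing the computation \eqref{eq:Z-is-subgaussian-squared-loss} --- is correct and identical to the paper's proof.

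Part (ii), however, has a genuine gap at precisely the step you flag as delicate. After applying Hoeffding's lemma conditionally on the design you are left with
\begin{equation*}
\E\Bigl[\exp\Bigl(\tfrac{1}{8n}\textstyle\sum_{i=1}^n (u^\top\Sigma^{-1/2}X_i)^2\Bigr)\Bigr],
\end{equation*}
the moment generating function of a sum of squares of $L\|u\|$-subGaussian variables evaluated at $1/(8n)$. This quantity is \emph{not} bounded by $\exp(L^2\|u\|^2/8)$: already for $X_i\sim N(0,\Sigma)$ (so $L=1$) each factor equals $(1-\|u\|^2/(4n))^{-1/2}$, hence the product is $+\infty$ once $\|u\|^2\ge 4n$, and wherever it is finite it equals $(1-\|u\|^2/(4n))^{-n/2}>e^{\|u\|^2/8}$, diverging as $\|u\|^2\uparrow 4n$. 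Since the lemma asserts the MGF bound for \emph{every} $u\in\R^p$ --- and this is what is consumed downstream, e.g.\ to extract coordinatewise subGaussianity of $\Sigma^{1/2}\tilde Z$ in the proof of \Cref{lemma:penalizedLassoBelongstoCone} --- your chain of inequalities breaks, and no choice of constants rescues it. The structural problem is that Hoeffding's lemma places a \emph{quadratic} function of $X_i$ in the exponent, and a subGaussian assumption on $X_i$ cannot uniformly control the expectation of such an exponential. The paper's proof avoids this entirely: it bounds the conditional centered-Bernoulli MGF by the symmetric two-point mixture $\tfrac12(e^{a/2}+e^{-a/2})$ rather than by $e^{a^2/8}$, so the exponent stays \emph{linear} in $X_i$ and the outer expectation reduces to two direct applications of \eqref{assum:subgaussian} with the vectors $\pm v/(2\sqrt n)$, giving $\prod_{i=1}^n e^{L^2\|u\|^2/(8n)}=e^{L^2\|u\|^2/8}$ exactly. (As an aside, the paper's assertion that $pe^{t(1-p)}+(1-p)e^{-tp}$ is maximized over $p$ at $p=1/2$ is itself only approximately true; the always-safe bound for a centered variable supported in $[-1,1]$ is $\cosh(t)$, which costs a factor $2$ in the final subGaussian constant but preserves the linear-in-$X_i$ structure that your route is missing.)
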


\begin{proof}
    (i) We first prove the result in the linear model for the squared loss.
    Here
    \eqref{optimality-additive-penalty} holds
    with $g_n$ and $f_n$ defined before \eqref{optimality-additive-penalty},
    so that
    \begin{equation}
    \sqrt n \nabla f_n(\beta^*) = \sqrt n \nabla g_n(\beta^*)
    = \Sigma^{1/2} Z \qquad \text{ where } \qquad 
    Z \triangleq \frac{1}{\sqrt n} \sum_{i=1}^n \eps_i \Sigma^{-1/2}X_i
    \label{Z-squared-loss}
    \end{equation}
    in the linear model for the squared loss.
    Let $\hat T = \{ b\in\R^p: h(b)-h(\beta^*) \le Z^T\Sigma^{1/2}(b-\beta^*)\}$.
    Then both $\eta$ and $\hbeta$ belong to $\hat T$ by \eqref{optimality-additive-penalty}.
    We already proved that $Z$ is is $L\sigma^*$-subgaussian in the sense that
    $\E[\exp(Z^T u)] \le \exp(L^2(\sigma^*)^2\|u\|^2/2)$ for all $u\in\R^p$
    in \eqref{eq:Z-is-subgaussian-squared-loss}; this completes the proof of (i).

    (ii)
    In logistic regression with the logistic loss,
    \eqref{optimality-additive-penalty} again,
    holds, i.e., $\{\eta,\hbeta\}$ belong to $\tilde T = \{ b\in\R^p: h(b)-h(\beta^*))\le\tilde Z^T\Sigma^{1/2}(b-\beta^*)\}$
    where
    \begin{equation}
    \tilde Z
    \triangleq 
    \sqrt n \Sigma^{-1/2} \nabla f_n(\beta^*) 
    = \sqrt n \Sigma^{-1/2} \nabla g_n(\beta^*)
    = \frac{1}{\sqrt n} \sum_{i=1}^n \left(Y_i - \frac{1}{1+e^{X_i^T\beta^*}}\right) \Sigma^{-1/2}X_i
    \label{Z-tilde-logistic}
    \end{equation}
    where $g_n$ and $f_n$ are defined before \eqref{optimality-additive-penalty}.
    We now show that $\tilde Z\in\R^p$ is a subgaussian vector.
Note that $\E[Y_i|X_i] =  1/(1+e^{X_i^T\beta^*} )$ so that 
    $\E[\tilde Z|X_1,...,X_n]=0$ and $\E[\tilde Z]=0$.
    If $B$ is Bernoulli with parameter $p$,
    then $\E[e^{t(B-p)}] = p e^{t(1-p)} + (1-p) e^{t(-p)}$
    which is maximized at $p=1/2$, hence 
    $\E[e^{t(B-p)}] \le (e^{t/2}+e^{-t/2})/2$.
    For any $u\in\R^p$, set $v=\Sigma^{-1/2}u$ and notice
    that by independence and the law of total expectation,
    $$\E[e^{\tilde Z^T u}]=
    \prod_{i=1}^n \E[e^{(Y_i - \E[Y_i|X_i])\frac{X_i^T v}{\sqrt n}}]
    \le \prod_{i=1}^n 
    \frac{\E e^{\frac{ X_i^Tv}{2\sqrt n}}+\E e^{\frac{-X_i^Tv}{2\sqrt n}}}{2}
    \le e^{L^2 \|\Sigma^{1/2} v\|^2 / 8}
    = e^{L^2 \|u\|^2 / 8},
    $$
    where for the last inequality we use that each $X_i$ is $L$-subgaussian.
    Hence $\tilde Z$ is $L/2$-subgaussian for the logistic loss.
\end{proof}

\lemmaPenalizedLassoBelongsToCone*
\begin{proof}[Proof of \Cref{lemma:penalizedLassoBelongstoCone}]
    We will apply the previous lemma, but first let us derive some properties
    of subgaussian vectors.

    Let $U$ be a random vector valued in $\R^p$ such that
    each component $U_j$ is $1$-subgaussian in the sense that $\E[e^{t U_j^2}] \le e^{t^2/2}$,
    for each $j=1,...,p$. Let $\mu>0$ be a deterministic real.

    Since $U_j$ is $1$-subgaussian, $\P(U_j>\sqrt{2x}) \le e^{-x}$
    by a Chernoff bound,
    hence $(U_j)_+^2/2$ is stochastically dominated by an exponential random variable $\tau$
    with parameter 1 and there exists a probability space on which both $U_j$ and
    $\tau$ are defined such that $U_j\le \sqrt{2\tau}$ holds almost surely.
    Hence using $|\sqrt a - \sqrt b|^2 \le a^2 - b^2$ for any $a>b >0$,
    $$\E[(U_j-\mu)_+^2]
    \le \E[(\sqrt{2\tau}-\mu)_+^2]
    \le \int_{\mu^2/2}^\infty (\sqrt{2t}-\mu)_+^2 e^{-t}dt
    \le 2 \int_{\mu^2/2}^\infty (t - \mu^2/2)_+ e^{-t}dt
    = 2 e^{-\mu^2/2}.
    $$ 
    The same holds with $U_j$ replaced by $-U_j$.
    For $\mu=(1+\xi)\sqrt{2\log(p/s)}$ for $\xi\ge 0$, this shows that
    $$\E\sum_{j=1}^p(|U_j|-\mu)_+^2
    \le \E\sum_{j=1}^p(U_j-\mu)_+^2+(-U_j-\mu)_+^2
    \le 4p e^{-\mu^2/2} = 4s/(p/s)^\xi.
    $$
    By Markov's inequality, for this value of $\mu$ and $\xi>0$,
    \begin{equation}
    \P\left(\frac 1 s \sum_{j=1}^p(|U_j|-\mu)_+^2 \le 2 \xi^2 \log(p/s)\right)
    \ge
    1 - \frac{4}{2\xi^2\log(p/s)(p/s)^\xi}.
        \label{event-h-subgaussian-probability-bound}
    \end{equation}
    By the triangle inequality, on this event, we also have 
    $$\max_{A\subset\{1,...,p\}:|A|=s}
    \left(\frac 1 s \sum_{j\in A}U_j^2\right)^{1/2} \le \mu+\xi\sqrt{2\log(p/s)}=(1+2\xi)\sqrt{2\log(p/s)}.
    $$
    Furthermore, if $\hat A$ denotes the subset achieving the maximum in the left
    hand side above, any value $U_j^2$ with $j\notin \hat A$ is smaller than
    the average of the values in $\hat A$ and
    \begin{equation}
        \max_{j\notin \hat A}|U_j|
        \le
        \max_{A\subset\{1,...,p\}:|A|=s}
        \left(\frac 1 s \sum_{j\in A}U_j^2\right)^{1/2} \le(1+2\xi)\sqrt{2\log(p/s)}.
        \label{event-h-subgaussian}
    \end{equation}

    In linear regression with the squared loss,
    the vector $Z$ is $L\sigma^*$-subgaussian in the sense
    that $\E\exp(u^TZ) \le \exp(L^2(\sigma^*)^2 \|u\|^2/2)$ holds.
    Hence, since $\Sigma$ satisfies the normalization \eqref{eq:Normalized},
    the random vector $U=\Sigma^{1/2} Z / (L\sigma^*)$ satisfies
    for all $j=1,...,p$ that
    $\E[\exp(t U_j)] \le e^{-t^2/2}$  and the bound \eqref{event-h-subgaussian}
    holds on an event of probability at least equal to the right hand side
    of \eqref{event-h-subgaussian-probability-bound}.

    For any $\xi>0$, if $\lambda = L \sigma^*(1+3\xi)\sqrt{2\log(p/s)/n}$
    then any $b\in \hat T$ where $\hat T$ is defined in
    \eqref{set-hat-T-linear-model-squared-loss} satisfies
    $$0\le Z^T\Sigma^{1/2} (b-\beta^*)
    - \lambda\sqrt n(\|b\|_1-\|\beta^*\|_1) 
    = L\sigma^*\left(U^T(b-\beta^*) - \lambda\sqrt n (L\sigma^*)^{-1} (\|b\|_1 - \|\beta^*\|_1) \right).
    $$
    This implies, by replacing $\lambda$ by its value and using the Cauchy-Schwarz inequality on the support of $\beta^*$, that
    $$0 \le U^T\delta + (1+3\xi)\sqrt{2\log(p/s)}(\sqrt s \|\delta\|_2 - \|b_{S^c}\|_1)
    $$
    where $\delta=b-\beta^*$ and $S=\supp(\beta^*)$.
    Then each component of $U$ satisfies $\E[\exp(t U_j)] \le e^{-t^2/2}$ as
    explained above, the bound \eqref{event-h-subgaussian-probability-bound} applies.
    Hereafter, assume that event \eqref{event-h-subgaussian} holds.
    On event \eqref{event-h-subgaussian},
    $\sum_{j\in S} U_j \delta_j \le (1+2\xi)\sqrt{2\log(p/s)} \|\delta\|_2$
    by the Cauchy-Schwarz inequality.
    If $\hat A\subset \{1,...,p\}$ contains the indices of the $s$
    largest coefficients of $U$ in absolute value,
    then
    $\sum_{j\in \hat A} U_j \delta_j \le (1+2\xi)\sqrt{2\log(p/s)} \|\delta\|_2$
    again by the Cauchy-Schwarz inequality.
    Finally, 
    $\sum_{j\notin S\cup\hat A} U_j \delta_j \le (1+2\xi)\sqrt{2\log(p/s)} \|\delta_{S^c}\|_1$
    because  $\max_{j\notin S\cup\hat A} |U_j|$ is bounded from above
    as in \eqref{event-h-subgaussian}.
    Combining the above inequalities, on the event \eqref{event-h-subgaussian}
    we have
    $$0 \le \sqrt{s} \|\delta\|_2 (2 + 5\xi)\sqrt{2\log(p/s)}
    - \xi\sqrt{2\log(p/s)} \|\delta_{S^c}\|_1.
    $$
    This implies that $\|\delta_{S^c}\|_1 \le \sqrt s \|\delta\|_2 (2\xi^{-1}+5)$
    and $\|\delta\|_1 \le \sqrt s \|\delta\|_2(6+2\xi^{-1})$.

    The proof in logistic regression is the same up to a different scaling
    due to $\tilde Z$ from~\Cref{lem:LemmaA1}(ii) being $L/2$-subgaussian,
    while in linear regression with the squared loss we had $Z$ being
    $L\sigma^*$-subgaussian.
\end{proof}

\section{Group-Lasso}

\lemmaGroupLassoBelongsToCone*

\begin{proof}[Proof of \Cref{lemma:GroupLassoBelongstoCone}]
    Define $U=\Sigma^{1/2} Z/(L\sigma^*)$ where $Z$ is as in~\Cref{lem:LemmaA1}(i).
    Then $\E[\exp(v^T Z/(L\sigma^*))] \le \exp(\|v\|^2/2)$
    by the properties of $Z$ stated in~\Cref{lem:LemmaA1}(i).
    We wish to study the restriction $U_{G_k}$ of $U$ to group $G_k$.
    Let $M_k$ be the matrix with $|G_k|$ rows and $p$ columns
    made of the rows of $\Sigma^{1/2}$ indexed in $G_k$.
    Then $U_{G_k}= M_k Z/(L\sigma^*)$ and
by applying the concentration inequality in \cite{hsu2012tail} to the subgaussian
vector $Z/(L\sigma^*)$ and the matrix $M_k^T M_k\in\R^{p\times p}$,
$$\|U_{G_k}\|_2^2 \le \Tr(M_k^T M_k)+2\sqrt{x}\Tr (M_k^T M_k M_k^T M_k)^{1/2} + 2x
\| M_k^T M_k \|_{op}.
$$
with probability at least $1-e^{-x}$.
By properties of the trace,
$\Tr(M_k^T M_k) = \Tr(M_k M_k^T) = \Tr(\Sigma_{G_k,G_k})$.
Similarly for the second term, 
$\Tr (M_k^T M_k M_k^T M_k)^{1/2} = \| \Sigma_{G_k,G_k}\|_F$.
Finally, $\| M_k^T M_k \|_{op} = \|M_k M_k^T\|_{op} = \|\Sigma_{G_k,G_k}\|_{op}$
so that the previous display reads
\begin{align}
\|U_{G_k}\|_2^2 
&\le
\Tr(\Sigma_{G_k,G_k})+2\sqrt{x}\|\Sigma_{G_k,G_k}\|_F + 2x
\| \Sigma_{G_k,G_k} \|_{op}\\
& \le d + 2\sqrt{x d} + 2x \le (\sqrt d + \sqrt{2x})^2
\end{align}
where for the second inequality we used that
$\Tr(\Sigma_{G_k,G_k}) \le |G_k|=d$ and
$\|\Sigma_{G_k,G_k}\|_F^2 \le \sqrt{|G_k|}= \sqrt d$ using the assumption
$\|\Sigma_{G_k,G_j}\|_{op} \le 1$.
Hence $W_k=(\|U_{G_k}\|_2 - \sqrt{d})_+$ is 1-subgaussian for every group $k=1,...,M$,
in the sense that $\P(W_k>\sqrt{2x})\le e^{-x}$.
As previously for the Lasso, $W_k$ is thus stochastically dominated by $\sqrt{2\tau}$ where $\tau$
is an exponential random variable
with parameter 1, and 
\begin{align}
    \label{eq:expectation-group-lasso}
    \E[(\|U_{G_k}\|_2 - \sqrt{d} - \mu)_+^2 ] \le
\E[(\sqrt{2\tau}-\mu)_+^2]
% &=
% \int_0^\infty (\sqrt{2t}-\mu)_+^2 e^{-t}dt
% \\ 
&\le
2 \int_0^\infty (t-\mu^2/2)_+^2 e^{-t}dt = 2e^{-\mu^2/2}.
\end{align}
Define $W\ge 0$ by
$
W^2=\sum_{k=1}^M (\|U_{G_k}\|_2 - \sqrt{d} - \mu)_+^2
$.
We have thus proved that
$\E[W^2] \le 2Me^{-\mu^2/2}$ and for $\mu=(1+\xi)\sqrt{2\log(M/s)}$ we obtain
$\E[W^2] \le 2s/(M/s)^{\xi}$, and by Markov's inequality
\begin{equation}
    \label{eq:above-event-group-lasso}
    \P\left(\tfrac 1 s W^2 \le \xi^2 2\log(M/s)\right) \ge 1 - 2/(\xi^2\log(M/s)(M/s)^\xi).
\end{equation}
Furthermore, on the above event \eqref{eq:above-event-group-lasso}, by the triangle inequality we have
$$\max_{A\subset\{1,...,M\}:|A|= s}\left(\frac 1 s \sum_{k\in A}\|U_{G_k}\|_2^2\right)^{1/2}
\le \sqrt d + \mu+ \xi \sqrt{2\log(M/s)} =\sqrt d + (1+2\xi)\sqrt{2\log(M/s)}
\triangleq \lambda_0.$$
Let $\lambda_0$ be defined as the right hand side of the previous display
and notice that $\sqrt n\lambda/(L\sigma^*)=(1+\xi)\lambda_0$,
so that if $\hat A$ is the subset of $[M]$ with the indices $k$
with largest $\|U_{G_k}\|$ (i.e., a subset attaining the maximum
in the previous display), we have proved that
\begin{equation}
    \label{inequality-lambda-hat-GL}
    \max_{k\in\hat A^c}
    \| (\Sigma^{1/2}Z)_{G_k}\|
    \le
    \Big(\frac 1 s \sum_{k\in \hat A}\|(\Sigma^{1/2}Z)_{G_k}\|_2^2\Big)^{1/2} 
    \le (1+\xi)^{-1}\sqrt n\lambda.
\end{equation}
By~\Cref{lem:LemmaA1}(i),
and the Cauchy-Schwarz inequality on the groups in $S$,
\begin{align*}
    0
    \le
    Z^T\Sigma^{1/2}\delta + \sqrt n \lambda\sum_{k=1}^M (\|\beta^*_{G_k}\| - \|b_{G_k}\|)
    &\le 
    Z^T\Sigma^{1/2}\delta + \sqrt n \lambda\left(\sqrt{s}\|\delta\| - \sum_{k\notin S} \|b_{G_k}\|\right)
    \\&= L\sigma^*
    \left[
    \delta^T U
    + (1+\xi)\lambda_0
    \left(
    \sqrt s \|\delta\|
    -
    \sum_{k\notin S} \|b_{G_k}\|\right)
    \right],
\end{align*}
where $\delta=(b-\beta^*)$
and $U=\Sigma^{1/2}Z/(L\sigma^*)$.
We now bound $U^T\delta$ which appears on the previous display.
On the above event, \eqref{eq:above-event-group-lasso} we have
$\sum_{k\in S} \delta_{G_k}{}^T U_{G_k}
\le(\sum_{k\in S}\|U_{G_k}\|^2)^{1/2} \|\delta\|
\le\sqrt s(\sqrt d  + (1+2\xi)\sqrt{2\log(M/s)})\|\delta\| = \sqrt s \lambda_0\|\delta\|$.
Similarly, if $\hat A$
contains the indices of the $s$ groups with the largest
$\|U_{G_k}\|$ then on the above event \eqref{eq:above-event-group-lasso},
$\sum_{k\in \hat A} \delta_{G_k}{}^T U_{G_k}
\le(\sum_{k\in \hat A}\|U_{G_k}\|^2)^{1/2} \|\delta\|
\le\sqrt s(\sqrt d  + (1+2\xi)\sqrt{2\log(M/s)})\|\delta\|
=\sqrt s \lambda_0\|\delta\|
$.
For any group $G_k$ with $k\notin S\cup \hat A$,
we have $\|U_{G_k}\| \le \lambda_0$.
Combining these bounds with the fact that $\lambda=L\sigma^*(1+\xi)\lambda_0$,
we have established that on event \eqref{eq:above-event-group-lasso},
\begin{equation}
    \label{final-conclusion-lemma-group-lasso-cone}
0\le
(3+\xi)\sqrt s \lambda_0 \|\delta\|
-
\xi \lambda_0 \sum_{k\notin S}\|\delta_{G_k}\|,
\qquad
\sum_{k\notin S}\|\delta_{G_k}\|
\le\sqrt s (1+3/\xi)\|\delta\|
.
\end{equation}
On the groups indexed in $S$, by the Cauchy-Schwarz inequality we have 
$\sum_{k\in S}\|\delta_{G_k}\|
\le\sqrt s \|\delta\|$.
Hence $\delta=b-\beta^*$ belongs to the cone defined in the statement of the
Lemma.
\end{proof}

\lemmaGroupLassoBoundGamma*

\begin{proof}[Proof of \Cref{lemma:GroupLassoBoundGamma}]
    By definition of the restricted eigenvalue $\phi(T)$,
    for any $u\in T$ with $\|\Sigma^{1/2}u\|=1$ we have $\|u\|\le\phi(T)^{-1}$.
    Let $g\sim N(0, I_p)$; we wish to bound the expectation
    $\E\sup_{u\in T: \|\Sigma^{1/2}u\|=1} |g^T\Sigma^{1/2}u|$.
    Let $U=\Sigma^{1/2}g$. 
    Let $\mu=\sqrt{2\log(M/s)}$. We have for any $u\in T$ with $\|\Sigma^{1/2}u\|=1$,
    $$u^TU 
    \le \sum_{k=1}^M \|u_{G_k}\| \|U_{G_k}\|
    = \sum_{k=1}^M \|u_{G_k}\|(\|U_{G_k}\| - \mu - \sqrt d)
    + (\mu+\sqrt d)\sum_{k=1}^M \|u_{G_k}\|.
    $$
    For the second term, since $u\in T$, inequality $\sum_{k=1}^M \|u_{G_k}\|\le \sqrt s (2+3/\xi)\|u\|$ hence
    the second term is bounded from above as follows,
    $(\mu+\sqrt d)\sum_{k=1}^M \|u_{G_k}\| \le (\mu+\sqrt d)\sqrt s(2+3/\xi) \phi(T)^{-1}$.

    It remains to bound the first term. By the Cauchy-Schwarz inequality,
    $$\sum_{k=1}^M \|u_{G_k}\|(\|U_{G_k}\| - \mu - \sqrt d)
    \le \|u\|\left(\sum_{k=1}^M (\|U_{G_k}\|-\mu-\sqrt d)_+^2 \right)^{1/2}.
    $$
    Finally, $\|u\|\le\phi(T)^{-1}$ and the expectation bound \eqref{eq:expectation-group-lasso} show that the previous display is bounded from above
    by $\phi(T)^{-1} (M 2e^{-\mu^2/2})^{1/2} = \phi(T)^{-1} \sqrt{2 s}$.

\end{proof}

\section{Proofs of \Cref{sec:ExactRiskBound}}

\theoremExactRiskIdentity*

\begin{proof}[Proof of Theorem~\ref{thm:ExactRiskBound}]
Since $(X_i)_{i=1}^n$ and $(\varepsilon_i)_{i=1}^n$ are independent, we will condition throughout on $\varepsilon_1,\ldots,\varepsilon_n$. The proximal operator is Lipschitz
for any convex function $h$, in the sense that
\[
\|\prox_h(\beta^* + z) - \prox_h(\beta^* + z')\| \le \|z - z'\|\quad\mbox{for all}\quad z,z'\in\mathbb{R}^p.
\]
see Definition 2.3 and the following discussion in \cite{lee2014proximal}.
If $\Sigma=I_p$, the first order expansion $\eta$ in \eqref{eq:FirstOrderApproxRegularized} is given by
$\eta = \prox_h\left(\beta^* + n^{-1}\sum_{i=1}^n X_i\varepsilon_i\right)$.
This means $\eta = \prox_h(\beta^* + n^{-1/2}\sigma^*Z)$ for $Z=(\sum_{i=1}^n\eps_i^2)^{-1/2} \sum_{i=1}^n \eps_iX_i$ and
$Z\sim N(0, I_p)$ if $X_1,...,X_n$ are iid $N(0,I_p)$ independent of $\eps_1,...,\eps_n$.
Note that in this case, $Z$ is independent of $\sigma^*$.
Thus $\eta$ is a $n^{-1/2}\sigma^*$-Lipschitz function of $Z$,
and by the triangle inequality $\|\eta-\beta^*\|$ is also a $n^{-1/2}\sigma^*$-Lipschitz
function of $Z$.
By the Gaussian concentration inequality \cite[Theorem 5.6]{boucheron2013concentration},
with probability $1 - 2\exp(-t^2/2)$ we have
\[
|\|\eta - \beta^*\| - \mathbb{E}_Z\left[\|\eta - \beta^*\|\right]| \le  n^{-1/2}\sigma^*t.
\]
The Gaussian Poincar\'e inequality \cite[Theorem 3.20]{boucheron2013concentration}
implies that $(\mbox{Var}(\|\eta - \beta^*\|))^{1/2}$ is bounded by the Lipschitz constant and hence
$|\mathbb{E}_Z\left[\|\eta - \beta^*\|\right] - (\mathbb{E}_Z[\|\eta - \beta^*\|^2])^{1/2}| \le n^{-1/2}\sigma^*.$
Combining these inequalities above, we get with probability $1 - 2\exp(-t^2/2)$
\[
\left|\|\eta - \beta^*\| - (\mathbb{E}[\|\beta^* - \prox_h(\beta^* + n^{-1/2}\sigma^*Z)\|^2])^{1/2}\right| \le n^{-1/2}\sigma^*(t + 1).
\]
Therefore, by triangle inequality, on the same event we have
\[
\left|\|\hbeta - \beta^*\| - (\mathbb{E}[\|\beta^* - \prox_h(\beta^* + n^{-1/2}\sigma^*Z)\|^2])^{1/2}\right| \le n^{-1/2}\sigma^*(t + 1) + \|\eta - \hbeta\|
\]
which completes the proof.
\end{proof}

\section{Proofs of \Cref{sec:Inference}}
\propositionDebiasing*

\begin{proof}[Proof of \Cref{prop:debiasing}]
    Some of the argument below is borrowed from \cite[Section
    6]{bellec_zhang2019debiasing_adjust}.
    Let $T_n = \sqrt n \|z_a\|^{-2} z_a^T\eps$,
    and let $Q_a = I_p - \Sigma^{-1} a a^T$; notice that $T_n$
    has the $t$-distribution with $n$ degrees-of-freedom.
    Also note that $\Sigma^{1/2}Q_a\Sigma^{-1/2} = I - (\Sigma^{-1/2}a)(\Sigma^{-1/2}a)^{\top}$ and $z_a = X\Sigma^{-1}a = X\Sigma^{-1/2}\Sigma^{-1/2}a$; this implies that
    % and that 
    $\X Q_a$ is independent of $z_a$ because $(z_a, \X Q_a)$
    are jointly normal and uncorrelated.
    By definition of $\hat\theta$, simple algebra yields
    that 
    \begin{align}
    \sqrt n (\hat \theta - a^T\beta^*) - T_n
    &= - \sqrt n \|z_a\|^{-2} z_a^T \X Q_a(\hbeta-\beta^*) 
    \\
    &= - \sqrt n \|z_a\|^{-2} z_a^T \X Q_a(\eta -\beta^*) 
    + \sqrt n \|z_a\|^{-2} z_a^T \X Q_a(\eta - \hbeta).
    \end{align}
    Note that $\eta$ only depends on $\X$ through $\eps^T\X$
    hence $\eta$ is independent of $(P_\eps^\perp \X,P_\eps^\perp z_a)$
    where $P_\eps^\perp = I_n - \|\eps\|^{-2}\eps\eps^T$ is an orthogonal projection;
    set also $P_\eps = \|\eps\|^{-2}\eps\eps^T$ for the complementary projection.
    We further split the first term above, so that $\sqrt n(\hat \theta - a^T\beta^*)-T_n$
    is equal to
    $$
\sqrt n \|z_a\|^{-2}
\left(
    -
    \left[z_a^T P_\eps^\perp\X Q_a(\eta -\beta^*)\right]
    -\left[z_a^T P_\eps \X Q_a(\eta -\beta^*) \right]
    + \left[ z_a^T \X Q_a(\eta - \hbeta)\right] 
    \right)
    $$
    Hereafter, assume that the event $\{\eta-\beta^*,\eta-\hbeta\}\subset T$ holds
    (this holds with probability at least $1-\alpha$ by assumption).
    For the first bracket inside the large parenthesis,
    $P_\eps^\perp z_a$ is independent of $(\eta,\X Q_a)$
    conditionally on $\eps$, so that
    $z_a^T P_\eps^\perp\X Q_a(\eta -\beta^*)= O_p(1) \|\X Q_a(\eta-\beta^*)\|$.
    For the second bracket, since $P_\eps$ is rank 1, $\|P_\eps z_a\| = O_p(1)$
    and the second bracket is also $O_p(1) \|\X Q_a(\eta-\beta^*)\|$
    by the Cauchy-Schwarz inequality.
    Since $\eta-\beta^*\in T$ and $r_n=\gamma(T,\Sigma)/\sqrt n$ we have
    $\X = \X Q_a + z_a a^T$ so that by the triangle inequality,
    $$\|\X Q_a(\eta-\beta^*)\|
    \le \|\X(\eta-\beta^*)\| + \|z_a\| |a^T(\eta-\beta^*) |
    \le \|\X(\eta-\beta^*)\| + \|z_a\| \|\Sigma^{1/2}(\eta-\beta^*)\|.
    $$
    By an application of \cite{plan_vershynin_liaw2017simple},
    $\sup_{u\in T: \|\Sigma^{1/2}u\|=1}|\|\X u\| - \sqrt n \|\Sigma^{1/2}u\||
    \le \C(\gamma(T,\Sigma) + t)$
    with probability at least $1-2e^{-t^2/2}$,
    since $\X \Sigma^{-1/2}$ has iid $N(0,1)$ entries.
    We take $t = \gamma(T,\Sigma)=r_n\sqrt n$. Since $\|z_a\| = O_p(\sqrt n)$,
    the first two brackets above are 
    $O_p(\sqrt n (1+r_n)) \|\Sigma^{1/2}(\eta-\beta^*)\|$.

    We now focus on the third bracket. Since $\eta-\hbeta\in T$,
    \begin{equation}
        \label{eq:proof-rhs}
        z_a^T\X Q_a (\eta-\hbeta) \le 
        \|\Sigma^{1/2}(\eta-\hbeta)\| \|z_a\|
        \sup_{u \in T: \|\Sigma^{1/2}u\|=1}\|z_a\|^{-1} z_a^T \X Q_a \Sigma^{-1/2} \Sigma^{1/2}u.
    \end{equation}
    Since $z_a$ and $\X Q_a$ are independent,
    conditionally on $z_a$, the random vector
    $\|z_a\|^{-1}z_a^T \X Q_a \Sigma^{-1/2}$
    is normal
    with covariance matrix $ \Sigma^{1/2} Q_a \Sigma^{-1/2}$.
    Since $\Sigma^{1/2} Q_a \Sigma^{-1/2}$ is a projection matrix in $\R^p$,
    $\Sigma^{1/2} Q_a \Sigma^{-1/2} \preceq I_p$ in the sense of positive semi-definite
    matrices. By 
    the Sudakov-Fernique’s inequality \cite[Theorem 7.2.11]{vershynin2018high},
    conditionally on $z_a$ we have
    $$\E\left[\sup_{u \in T: \|\Sigma^{1/2}u\|=1} \|z_a\|^{-1} z_a^T \X Q_a \Sigma^{-1/2} \Sigma^{1/2} u  \;\Big|\; z_a \right]
    \le
    \E\left[\sup_{u \in T: \|\Sigma^{1/2}u\|=1} g^T \Sigma^{1/2} u \right]
    =\gamma(T,\Sigma)
    $$
    for some $g\sim N(0,I_p)$.
    Furthermore, by Gaussian concentration \cite[Theorem 5.8]{boucheron2013concentration}
    again conditionally on $z_a$ we have
    $\sup_{u \in T: \|\Sigma^{1/2}u\|=1} \|z_a\|^{-1} z_a^T \X Q_a \Sigma^{-1/2} \Sigma^{1/2} u \le \gamma(T,\Sigma) + t$
    with probability at least $1-e^{-t^2/2}$.
    Taking $t= \gamma(T,\Sigma) = r_n \sqrt n$, we obtain that the right hand
    side of \eqref{eq:proof-rhs} is bounded from above
    on an event of probability at least $1-e^{-t^2/2}$ by
    $\|\Sigma^{1/2}(\eta-\hbeta)\| \|z_a\| \sqrt n  r_n$.
    Given that $\|z_a\|/\sqrt n = O_p(1)$ and $\sqrt n /\|z_a\| = O_P(1)$,
    the proof is complete.
\end{proof}

\section{Proofs of Results in~\ref{sec:proof-sketch}}
\label{appendix:proof-sketch-details}

\theoremSketchDeterministic*

\begin{proof}[Proof of Theorem~\ref{thm:Deterministic}]
    \label{proof:thm:GeneralLossSlow}
By strong convexity of the objective function of $\eta$ with respect to
the norm $\|\cdot\|_K$, or alternatively by application
of for instance \cite[Lemma 1]{bellec2018prediction} or
or \cite[Lemma A.2]{bellec2016slope},
\begin{align}
\frac{1}{2}\|K^{1/2}(\hat{\beta} - \eta)\|^2 &\le \frac{1}{n}\sum_{i=1}^n \ell'(Y_i, X_i^{\top}\beta^*)X_i^{\top}(\hat{\beta} - \beta^*) + \frac{1}{2}\|K^{1/2}(\hat{\beta} - \beta^*)\|^2 + h(\hat{\beta})\label{eq:OptimalityEta}\\
&\qquad\quad - \left[\frac{1}{n}\sum_{i=1}^n \ell'(Y_i, X_i^{\top}\beta^*)X_i^{\top}(\eta - \beta^*) + \frac{1}{2}\|K^{1/2}(\eta - \beta^*)\|^2 + h(\eta)\right]. \nonumber
\end{align}
From the definition~\eqref{hbeta-intro} of $\hat{\beta}$, we get
\begin{equation}\label{eq:OptimalityHatBeta}
0 \le \frac{1}{n}\sum_{i=1}^n \ell(Y_i, X_i^{\top}\eta) + h(\eta) - \frac{1}{n}\sum_{i=1}^n \ell(Y_i, X_i^{\top}\hat{\beta}) - h(\hat{\beta}).
\end{equation}
Adding inequalities~\eqref{eq:OptimalityEta} and~\eqref{eq:OptimalityHatBeta}, 
the terms $h(\hbeta)$ and $h(\eta)$ cancel out and we obtain
\begin{equation}\label{eq:CombinedOptimalityIneq}
\begin{split}
&\frac{1}{2}\|K^{1/2}(\hat{\beta} - \eta)\|^2\\ &\qquad\le \frac{1}{n}\sum_{i=1}^n \ell(Y_i, X_i^{\top}\eta) - \frac{1}{n}\sum_{i=1}^n \ell'(Y_i, X_i^{\top}\beta^*)X_i^{\top}(\eta - \beta^*) - \frac{1}{2}\|K^{1/2}(\eta - \beta^*)\|^2\\
&\qquad\quad - \left[\frac{1}{n}\sum_{i=1}^n \ell(Y_i, X_i^{\top}\hat{\beta}) - \frac{1}{n}\sum_{i=1}^n \ell'(Y_i, X_i^{\top}\beta^*)X_i^{\top}(\hat{\beta} - \beta^*) - \frac{1}{2}\|K^{1/2}(\hat{\beta} - \beta^*)\|^2\right].
\end{split}
\end{equation}
The terms on the right hand side resemble the remainder terms from a second order Taylor series expansion except for $K$ instead of $\hat{K}$. Using the Taylor expansion above, we get for any $\beta\in\mathbb{R}^p$
\begin{align}
&\frac{1}{n}\sum_{i=1}^n \ell(Y_i, X_i^{\top}\beta) - \frac{1}{n}\sum_{i=1}^n \ell'(Y_i, X_i^{\top}\beta^*)X_i^{\top}(\beta - \beta^*) - \frac{1}{2}\|K^{1/2}(\beta - \beta^*)\|^2\\
&\qquad= \frac{1}{2n}\sum_{i=1}^n |X_i^{\top}(\beta - \beta^*)|^2a_i(\beta) + \frac{1}{2}(\beta - \beta^*)^{\top}(\hat{K} - K)(\beta - \beta^*),  
\end{align}
where
\begin{equation}\label{eq:DefAi}
a_i(\beta) := \int_0^1 \{\ell''(Y_i, X_i^{\top}\beta^* + tX_i^{\top}(\beta - \beta^*)) - \ell''(Y_i, X_i^{\top}\beta^*)\}dt.
\end{equation}
(Note that for the squared loss, $\ell''=1$ is constant and $a_i(\cdot)=0$
which leads to a mucher simple analysis;
the reader only interested in squared loss may skip the analysis of $a_i(\cdot)$).
Substituting this in~\eqref{eq:CombinedOptimalityIneq}, we get
% \begin{equation}
\begin{align}
\|K^{1/2}(\hat{\beta} - \eta)\|^2 &\le (\eta - \beta^*)^{\top}(\hat{K} - K)(\eta - \beta^*)
- (\hat{\beta} - \beta^*)^{\top}(\hat{K} - K)(\hat{\beta} - \beta^*)
\label{eq:LeadFastRate-1}
\\
&\quad+ \frac{1}{n}\sum_{i=1}^n |X_i^{\top}(\eta - \beta^*)|^2a_i(\eta) - \frac{1}{n}\sum_{i=1}^n |X_i^{\top}(\hat{\beta} - \beta^*)|^2a_i(\hat{\beta}).
\label{eq:LeadFastRate-2}
\end{align}
% \end{equation}
From the Lipschitz condition on $\ell''(\cdot, \cdot)$, we get $|a_i(\beta)| \le B|X_i^{\top}(\beta - \beta^*)|,$ and hence part 1 of the result follows.

In part 1, we did not use any information about $\hat{\beta}-\eta$. For part 2, we will control the right hand side ``quadratic forms'' in~\eqref{eq:LeadFastRate-1} in a more refined way. By simple algebra and the definition of $Q_{n,2}(\cdot)$,
\begin{equation}\label{eq:QuadraticControlDiff}
\begin{split}
&(\eta - \beta^*)^{\top}(\hat{K} - K)(\eta - \beta^*) - (\hat{\beta} - \beta^*)^{\top}(\hat{K} - K)(\hat{\beta} - \beta^*)\\ &\quad= ((\eta - \beta^*) - (\hat{\beta} - \beta^*))^{\top}(\hat{K} - K)((\eta - \beta^*) + (\hat{\beta} - \beta^*))\\
&\quad= (\eta - \hat{\beta})^{\top}(\hat{K} - K)(\eta - \beta^*)\\
&\quad\qquad+ (\eta - \hat{\beta})^{\top}(\hat{K} - K)(\hat{\beta} - \beta^*)\\
&\quad\le Q_{n,2}(T)\|K^{1/2}(\eta - \hat{\beta})\|\left[\|K^{1/2}(\hat{\beta} - \beta^*)\| + \|K^{1/2}(\eta - \beta^*)\|\right].
\end{split}
\end{equation}
This completes the control of first difference in~\eqref{eq:LeadFastRate-1}. For the second difference in~\eqref{eq:LeadFastRate-2}, observe that 
\[
|a_i(\beta)| \le B|X_i^{\top}(\beta - \beta^*)|\quad\mbox{and}\quad |a_i(\beta) - a_i(\alpha)| \le B|X_i^{\top}(\beta - \alpha)|,
\]
and hence
\begin{align*}
&\frac{1}{n}\sum_{i=1}^n |X_i^{\top}(\eta - \beta^*)|^2a_i(\eta) - \frac{1}{n}\sum_{i=1}^n |X_i^{\top}(\hat{\beta} - \beta^*)|^2a_i(\hat{\beta})\\
&\quad= \frac{1}{n}\sum_{i=1}^n a_i(\eta)\left[|X_i^{\top}(\eta - \beta^*)|^2 - |X_i^{\top}(\hat{\beta} - \beta^*)|^2\right]+ \frac{1}{n}\sum_{i=1}^n |X_i^{\top}(\hat{\beta} - \beta^*)|^2\left[a_i(\eta) - a_i(\hat{\beta})\right]\\
&\quad{\le} \frac{B}{n}\sum_{i=1}^n |X_i^{\top}(\eta - \beta^*)|\times|X_i^{\top}((\eta - \beta^*) - (\hat{\beta} - \beta^*))|\times|X_i^{\top}((\eta - \beta^*) + (\hat{\beta} - \beta^*))|\\
&\quad\quad+ \frac{B}{n}\sum_{i=1}^n |X_i^{\top}(\hat{\beta} - \beta^*)|^2\times|X_i^{\top}(\eta - \hat{\beta})|\\
&\quad\le \frac{B}{n}\sum_{i=1}^n |X_i^{\top}(\eta - \hat{\beta})|\times\left[|X_i^{\top}(\eta - \beta^*)|^2 + |X_i^{\top}(\hat{\beta} - \beta^*)|^2\right]\\
&\quad\quad+ \frac{B}{n}\sum_{i=1}^n |X_i^{\top}(\eta - \hat{\beta})|\times|X_i^{\top}(\eta - \beta^*)|\times|X_i^{\top}(\hat{\beta} - \beta^*)|.
\end{align*}
The right hand side is trivially bounded by
\[
3B\|\eta - \hat{\beta}\|_{K}\left[\|(\eta - \beta^*)\|_{K}^2 + \|(\hat{\beta} - \beta^*)\|_{K}^2\right]\sup_{u,v,w\in \tilde T}\frac{1}{n}\sum_{i=1}^n\frac{|X_i^{\top}u|}{\|K^{1/2}u\|}\times\frac{|X_i^{\top}v|}{\|K^{1/2}v\|}\times\frac{|X_i^{\top}w|}{\|K^{1/2}w\|}.
\]
Using $3 abc \le a^3+b^3+c^3$ for any positive $\{a,b,c\}$,
the previous display is bounded from above by
\[
B\|K^{1/2}(\eta - \hat{\beta})\|\left[\|K^{1/2}(\eta - \beta^*)\|^2 + \|K^{1/2}(\hat{\beta} - \beta^*)\|^2\right]Z_n(T).
\]
Substituting these bounds in~\eqref{eq:LeadFastRate-2}, we get the result.
\end{proof}

% \begin{proof}[Proof of Theorem~\ref{thm:FirstOrderApprox}]
%     \label{proof:thm:FirstOrderApprox}
%     (i)
%     Inequality \eqref{conclusion-thm:GeneralLossSlow-after-square-root}
%     with the assumptions on $r_n$ implies
% \begin{equation}\label{eq:FirstConclusion}
%     \|\hat{\beta} - \eta\|_{K} \le 2 \sqrt{r_n}\left[\|(\eta - \beta^*)\|_{K} + \|(\hat{\beta} - \beta^*)\|_{K}\right].
% \end{equation} 
% Using
% $\hat{\beta} - \eta = (\hat{\beta} - \beta^*) - (\eta - \beta^*)$,
% we may bound from below the left-hand side by the triangle inequality.
% Thus, if
% \begin{equation}
% A := \|\hat{\beta} - \beta^*\|_{K}\vee\|\eta - \beta^*\|_{K}\;\,\mbox{and}\;\, a := \|\hat{\beta} - \beta^*\|_{K}\wedge\|\eta - \beta^*\|_{K},
% \end{equation} 
% then $a<A$ and \eqref{eq:FirstConclusion} implies
% $|A - a|  \le  4\sqrt{r_n} A$
% and $\left|a/A - 1\right| \le 4\sqrt{r_n}$.
% Since $r_n\le 1/64$, we also have $|a-A|\le A/2$ and $A\le (3/2)a$,
% which yields $|A-a|\le 6\sqrt{r_n}a'$, $|A/a-1|\le6\sqrt{r_n}$
% and $\|\hbeta_n-\eta\|_{K}\le 5\sqrt{r_n} a$.
% Claim (i) is proved.

% Claim (ii) is proved similarly, starting from
% $\|\hat{\beta} - \eta\|_{K} \le 2r_n\left[\|(\eta - \beta^*)\|_{K} + \|(\hat{\beta} - \beta^*)\|_{K}\right]$
% which is granted by \Cref{thm:GeneralLossFast} and the assumptions on $r_n$.
% \end{proof}

\propositionControlProc*

\begin{proof}[Proof of \Cref{prop:ControlProc}]
    \label{proof:prop:ControlQn}
Define the function classes $F$ and $H$ as
\begin{align*}
F &:= \left\{(x, y)\mapsto {x^{\top}u}/{\|u\|_K}:\,u\in T\right\},\\
H &:= \left\{(x, y)\mapsto {\ell''(y, x^{\top}\beta^*)x^{\top}u}/{\|u\|_K}:\,u\in T\right\}.
\end{align*}
It is then clear that
\begin{equation*}
\begin{split}
\max\{Q_{n,1}(T), Q_{n,2}(T)\} &\le \sup_{f\in F, h\in H}\,\left|\frac{1}{n}\sum_{i=1}^n \left\{f(X_i,Y_i)h(X_i, Y_i) - \mathbb{E}[f(X_i)h(X_i)]\right\}\right|,\\
Z_n(T) &= \sup_{f\in\mathcal{F}}\,\frac{1}{n}\sum_{i=1}^n |f(X_i)|^3.
\end{split}
\end{equation*}
We now apply Theorem 1.13 of~\cite{mendelson2016upper} with $2^{s_0/2} = \gamma(T, \Sigma), q = 5$. Hence, we get for any $t \ge 8$ with probability at least $1 - 2\exp(-c_1 t^2\gamma^2(T,\Sigma))$,
% \begin{equation}
\begin{align}
&\sup_{f\in F, h\in H}\,\left|\frac{1}{n}\sum_{i=1}^n \left\{f(X_i,Y_i)h(X_i, Y_i) - \mathbb{E}[f(X_i, Y_i)h(X_i,Y_i)]\right\}\right|\label{eq:ProductProcessBound}\\
&\qquad\le \frac{c_2t^2}{n}(\gamma(F) + 2^{s_0/2}\mbox{diam}(F))(\gamma(H) + 2^{s_0/2}\mbox{diam}(H))\nonumber\\
&\qquad\quad+ \frac{c_2t}{\sqrt{n}}(\mbox{diam}(F)(\gamma(H) + 2^{s_0/2}\mbox{diam}(H)) + \mbox{diam}(H)(\gamma(F) + 2^{s_0/2}\mbox{diam}(F))),\nonumber
\end{align}
% \end{equation}
where
\begin{align*}
\gamma(F) &:= L\mathbb{E}\left[\sup_{u\in T}\frac{|g^{\top}\Sigma^{1/2}u|}{\|K^{1/2}u\|}\right] \le L\sup_{u\in T}\frac{\|\Sigma^{1/2}u\|}{\|K^{1/2}u\|}\gamma(T, \Sigma) \le B_3^{1/2}L\gamma(T, \Sigma),\\
\gamma(H) &:= B_2L\mathbb{E}\left[\sup_{u\in T}\frac{|g^{\top}\Sigma^{1/2}u|}{\|K^{1/2}u\|}\right] \le B_2B_3^{1/2}L\gamma(T, \Sigma),
\end{align*}
and
\begin{align*}
\mbox{diam}(F) &:= L\sup_{u\in T}\frac{\|\Sigma^{1/2}u\|}{\|K^{1/2}u\|} \le B_3^{1/2}L,\\
\mbox{diam}(H) &:= B_2L\sup_{u\in T}\frac{\|\Sigma^{1/2}u\|}{\|K^{1/2}u\|} \le B_2B_3^{1/2}L.
\end{align*}
Substituting these quantities in~\eqref{eq:ProductProcessBound}, part 1 of the result follows. 
Alternatively, one could apply Theorem 1.12 of~\cite{mendelson2016upper} if $\ell(\cdot, \cdot)$ is assumed to be convex in the second argument (implying $\ell''(y, x^{\top}\beta^*) \ge 0$).

To prove part 2, we apply Equation (3.9) of~\cite{mendelson2010empirical} with $|I| = n$. Hence, we have with probability $1 - 2\exp(-c_1t\log n)$
\begin{equation}\label{eq:ZnBound}
Z_n(T) = \sup_{f\in F}\frac{1}{n}\sum_{i=1}^n |f(X_i)^3| \le \frac{c t^3}{n}\left(\gamma(F) + n^{1/3}\mbox{diam}(F)\right)^3,
\end{equation}
where $c>0$ is an absolute constant and
\[
\gamma(F) := L\mathbb{E}\left[\sup_{u\in T}\frac{|g^{\top}\Sigma^{1/2}u|}{\|K^{1/2}u\|}\right] \le B_3^{1/2}L\gamma(T,\Sigma),
\]
and
\[
\mbox{diam}(F) := L\sup_{u\in T}\frac{\|\Sigma^{1/2}u\|}{\|K^{1/2}u\|} \le B_3^{1/2}L.
\]
Hence part 2 follows.
\end{proof}
\subsection{Verification of Assumption~\ref{eq:LossAssump} for Logistic Loss}

\propositionLogisticSetting*

Lipschitzness and boundedness of $\ell''(y, u)$ for logistic loss is straightforward. These parts do not require $\|\Sigma^{1/2}\beta^*\| \le 1$. In order to prove the third part, we prove the following general result for general loss function with a lower second-order curvature.

Define for any $t > 0$
\[
\alpha(t) := \inf_{y\in\mathbb{R}}\inf_{|u| \le t}\,\ell''(y, u).
\]
Note that $\alpha(\cdot)$ is non-increasing. This is called the lower curvature function and appears in the works~\citet[Section 3.2]{loh2017statistical} and~\citet[Section 3]{koltchinskii2009sparsity}.
\begin{proposition}\label{prop:EigenvalueCondition}
Suppose $X_1, \ldots, X_n$ are iid $L$-subGaussian random vectors with covariance $\Sigma$. Then there exists absolute constants $c_1, c_2 > 0$ such that for any $u\in\mathbb{R}^p$, we have
\begin{align}
\frac{u^{\top}K_nu}{u^{\top}\Sigma u} 
~&\ge~ \sup_{\tau > 0}\,\alpha(\tau)\left\{1 - c_1L(\mathbb{P}(|X_1^{\top}\beta^*| > \tau))^{1/2}\right\}\\ 
~&\ge~ \alpha(2L\sqrt{\log(c_2L)}\|\Sigma^{1/2}\beta^*\|_2) / 2.
\end{align}
\end{proposition}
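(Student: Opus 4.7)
The proof is a truncation argument that splits the integrand in $u^{\top}K_n u$ according to the magnitude of the linear predictor $X_1^{\top}\beta^*$. Since the observations are i.i.d., we have $u^{\top}K_n u = \mathbb{E}[\ell''(Y_1, X_1^{\top}\beta^*)(u^{\top}X_1)^2]$. Fix $\tau > 0$. On the event $\{|X_1^{\top}\beta^*| \le \tau\}$, the very definition of the lower curvature gives $\ell''(Y_1,X_1^{\top}\beta^*)\ge \alpha(\tau)$, and convexity of the loss ensures $\ell''\ge 0$ so that the contribution on the complementary event may be dropped without changing direction of the inequality. This yields
\[
u^{\top}K_n u \;\ge\; \alpha(\tau)\,\Big(u^{\top}\Sigma u \;-\; \mathbb{E}[(u^{\top}X_1)^2\mathbf{1}\{|X_1^{\top}\beta^*|>\tau\}]\Big).
\]

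The next step is to control the tail expectation by Cauchy--Schwarz followed by the subGaussian fourth-moment bound. By \ref{eq:SubGaussianAssump}, $u^{\top}X_1$ is $L$-subGaussian with variance $\|u\|_{\Sigma}^2$, so $\mathbb{E}[(u^{\top}X_1)^4] \lesssim L^4\|u\|_{\Sigma}^4$; likewise for $X_1^{\top}\beta^*$ using $\|\beta^*\|_\Sigma$. Hence
\[
\mathbb{E}[(u^{\top}X_1)^2\mathbf{1}\{|X_1^{\top}\beta^*|>\tau\}]
\;\le\; \mathbb{E}[(u^{\top}X_1)^4]^{1/2}\,\mathbb{P}(|X_1^{\top}\beta^*|>\tau)^{1/2}
\;\lesssim\; L^2 \|u\|_{\Sigma}^2 \,\mathbb{P}(|X_1^{\top}\beta^*|>\tau)^{1/2}.
\]
Dividing through by $u^{\top}\Sigma u$ and taking the supremum over $\tau$ produces the first displayed inequality; the factor of $L$ in the statement (rather than $L^2$) is obtained by absorbing one factor of $L$ into $c_1$ using $L\ge 1$, or by invoking a sharper moment computation that exploits $\mathbb{E}[(u^{\top}X_1)^2]=\|u\|_{\Sigma}^2$ directly.

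For the second inequality, I would specialize to $\tau_* := 2L\sqrt{\log(c_2L)}\|\Sigma^{1/2}\beta^*\|$. The standard subGaussian tail gives $\mathbb{P}(|X_1^{\top}\beta^*|>\tau_*) \le 2\exp\!\big(-\tau_*^2/(2L^2\|\Sigma^{1/2}\beta^*\|^2)\big) = 2(c_2L)^{-2}$, and by choosing $c_2$ large enough the multiplicative correction $1 - c_1 L\,\mathbb{P}(\cdot)^{1/2}$ is at least $1/2$. Since $\alpha(\cdot)$ is non-increasing, evaluating the first inequality at $\tau_*$ yields the second bound. The main technical point throughout is the subGaussian moment control in the middle step; the rest is truncation bookkeeping and monotonicity of $\alpha$.
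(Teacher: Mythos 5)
Your argument is correct and follows essentially the same route as the paper: truncate on $\{|X_1^{\top}\beta^*|\le\tau\}$, invoke the lower curvature $\alpha(\tau)$ together with $\ell''\ge 0$, control the tail term by Cauchy--Schwarz and the subGaussian fourth-moment bound, and then specialize $\tau$ to $2L\sqrt{\log(c_2L)}\,\|\Sigma^{1/2}\beta^*\|$ so that the multiplicative correction is at least $1/2$. One small caveat: Cauchy--Schwarz genuinely yields $cL^2\|u\|_{\Sigma}^2\,\mathbb{P}(|X_1^{\top}\beta^*|>\tau)^{1/2}$, and since $L\ge 1$ the factor $L^2$ cannot be absorbed into an absolute constant times $L$ (the inequality $L^2\ge L$ goes the wrong way) --- but the paper's own proof writes $cL$ at exactly the same step, and the discrepancy only affects the choice of $c_2$ in the second display, not the final conclusion $\alpha(\cdot)/2$.
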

\begin{proof}
Fix a number $\tau > 0$. It is clear that
\begin{equation}\label{eq:TruncationBoundCovariance}
u^{\top}Ku = \mathbb{E}\left[\ell''(Y_1, X_1^{\top}\beta^*)(X_1^{\top}u)^2\right] \ge \alpha(\tau)\mathbb{E}\left[(X_1^{\top}u)^2\mathbbm{1}\{|X_1^{\top}\beta^*| \le \tau\}\right]. 
\end{equation}
Observe now that for any $u\in\mathbb{R}^p$, we have
\begin{align*}
0 &\le \mathbb{E}[(X_1^{\top}u)^2] - \mathbb{E}[(X_1^{\top}u)^2\mathbbm{1}\{|X_1^{\top}\beta^*| \le \tau\}]\\
&= \mathbb{E}[(X_1^{\top}u)^2\mathbbm{1}\{|X_1^{\top}\beta^*| > \tau\}]\\
&\le (\mathbb{E}[(X_1^{\top}u)^4])^{1/2}(\mathbb{P}(|X_1^{\top}\beta^*| > \tau))^{1/2}\\
&\le cL\|\Sigma^{1/2}u\|^2(\mathbb{P}(|X_1^{\top}\beta^*| > \tau))^{1/2},
\end{align*}
for some absolute constant $c > 0$. This implies that
\[
\mathbb{E}\left[(X_1^{\top}u)^2\mathbbm{1}\{|X_1^{\top}\beta^*| \le \tau\}\right] \ge \|\Sigma^{1/2}u\|_2^2\left\{1 - cL(\mathbb{P}(|X_1^{\top}\beta^*| > \tau))^{1/2}\right\}.
\]
Combining this with~\eqref{eq:TruncationBoundCovariance}, we get
\[
\frac{u^{\top}Ku}{u^{\top}\Sigma u} ~\ge~ \sup_{\tau > 0}\,\alpha(\tau)\left\{1 - cL(\mathbb{P}(|X_1^{\top}\beta^*| > \tau))^{1/2}\right\}.
\]
This proves the first inequality. To prove the second inequality, take $\tau = \rho\|\Sigma^{1/2}\beta^*\|_2$ for some $\rho > 0$ (to be determined later). For this choice, we have from $L$-subGaussianity
\[
\mathbb{P}\left(|X_1^{\top}\beta^*| \ge \tau\right) = \mathbb{P}(|X_1^{\top}\beta^*| \ge \rho\|\Sigma^{1/2}\beta^*\|_2) \le 2\exp\left(-\frac{\rho^2}{2L^2}\right).
\]
Hence, if $\rho = 2L(\log(\sqrt{8}cL))^{1/2}$ then
\begin{align}
1 - cL(\mathbb{P}(|X_1^{\top}\beta^*| > \tau))^{1/2} &\ge 1 - \sqrt{2}cL\exp\left(-\frac{\rho^2}{4L^2}\right)\\ &= 1 - \exp\left(-\frac{\rho^2}{4L^2} + \log(\sqrt{2}cL)\right) \ge \frac{1}{2}.
\end{align}
Therefore, for any $u\in\mathbb{R}^p$,
\[
\frac{u^{\top}Ku}{u^{\top}\Sigma u} ~\ge~ \frac{1}{2}\alpha(2L(\log(\sqrt{8}cL))^{1/2}\|\Sigma^{1/2}\beta^*\|_2).
\]
This completes the proof.
\end{proof}
\section{Verification of Restricted Strong Convexity and Rates for Logistic Lasso}\label{sec:RSCLogistic}
In the main paper, we proved/stated bounds for $\|\hat{\beta} - \beta^*\|_K$ and $\|\eta - \beta^*\|_K$ for squared loss with different penalties. These proofs can be extended to the case of logistic loss once restricted strong convexity (RSC) condition is verified; see~\Cref{prop:RateLogisticLasso-mainpaper} which is restated and proved below. Also, see~\cite{negahban2009unified} where the RSC was introduced. We present the following result that proves RSC for general cones $T$.
\begin{proposition}\label{prop:RSCVerifiedC}
Fix any cone $T\subset\mathbb{R}^p$. Assume~\ref{eq:LossAssump} and~\ref{eq:SubGaussianAssump} holds. If the loss function satisfies
\begin{equation}\label{eq:Stability2Deriv}
\sup_{|s - t| \le u}\,\frac{\ell''(y, s)}{\ell''(y, t)} \le \exp(3u),
\end{equation}
then for any $u\in T$ satisfying $\|u\|_{K} \le 1$, we have
\begin{equation}
\begin{split}
\frac{1}{n}\sum_{i=1}^n \ell(Y_i, X_i^{\top}\beta^* + X_i^{\top}u) &- \frac{1}{n}\sum_{i=1}^n \ell(Y_i, X_i^{\top}\beta^*) -  \frac{1}{n}\sum_{i=1}^n u^{\top}X_i\ell'(Y_i, X_i^{\top}\beta^*)\\ 
&\ge \|u\|_{K}^2\frac{B^{-1}_2[0.5B_3^{-1/2} - \tilde{Q}_n(T)]}{\exp(6\sqrt{2}L(\log(4B_3^{1/2}B_2L^2))^{1/2})},
\end{split}
\end{equation}
for a random quantity $\tilde{Q}_n(T)$ which satisfies the following: there exists a universal constant $C> 0$ such that for any $t\ge 0$, we have with probability $1 - \exp(-t)$,
\[
Q_n(T) \le C\left(\frac{B_2L\gamma(T,\Sigma)}{\sqrt{n}} + \frac{B_2\gamma^2(T,\Sigma)}{n} + \frac{t^{1/2}B_2L^2}{\sqrt{n}} + \frac{tB_2L^2}{n}\right).
\]
\end{proposition}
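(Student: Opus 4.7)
I would convert the left-hand side into a truncated quadratic empirical process using Taylor expansion together with the multiplicative stability condition \eqref{eq:Stability2Deriv}, and then control that process with exactly the Mendelson product-process inequality already invoked for $Q_{n,1}, Q_{n,2}$ in \Cref{prop:ControlProc}.

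\textbf{Steps 1--2 (Taylor, stability, truncation).} First I write the Taylor remainder in integral form as $(X_i^\top u)^2 \int_0^1 (1-s)\,\ell''(Y_i, X_i^\top\beta^* + s X_i^\top u)\,ds$. Applying \eqref{eq:Stability2Deriv} to the arguments $X_i^\top\beta^*$ and $X_i^\top\beta^* + s X_i^\top u$ (whose separation is $s|X_i^\top u|\le|X_i^\top u|$) yields the pointwise lower bound $\ell''(Y_i, X_i^\top\beta^* + s X_i^\top u) \ge \ell''(Y_i, X_i^\top\beta^*)\exp(-3s|X_i^\top u|)$ for $s\in[0,1]$. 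On the event $\{|X_i^\top u|\le\tau\}$ this exponential factor is at least $\exp(-3\tau)$, while on the complementary event I discard the contribution using $\ell''\ge 0$. Combining with $\int_0^1(1-s)\,ds = 1/2$, this produces the key bound $\mathrm{LHS} \ge \tfrac{\exp(-3\tau)}{2}\,R(u)$ where $R(u) := \tfrac{1}{n}\sum_{i=1}^n \ell''(Y_i, X_i^\top\beta^*)(X_i^\top u)^2 \mathbbm{1}\{|X_i^\top u|\le\tau\}$.

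\textbf{Step 3 (expectation).} The expectation of $R(u)$ equals $\|u\|_K^2$ minus the tail term $\mathbb{E}[\ell''(Y, X^\top\beta^*)(X^\top u)^2 \mathbbm{1}\{|X^\top u|>\tau\}]$. Bounding the tail by $|\ell''|\le B_2$, Cauchy--Schwarz, the fourth moment estimate $\mathbb{E}[(X^\top u)^4]\lesssim L^4\|u\|_\Sigma^4$, the subGaussian tail $\mathbb{P}(|X^\top u|>\tau) \le 2\exp(-\tau^2/(2L^2\|u\|_\Sigma^2))$ from \ref{eq:SubGaussianAssump}, and the relation $\|u\|_\Sigma^2 \le B_3 \|u\|_K^2 \le B_3$, gives a tail bound of the form $C\,B_2 L^2 B_3\,\exp(-\tau^2/(4L^2 B_3))$. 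The explicit choice $\tau = 2\sqrt{2}\,L\,\sqrt{\log(4B_3^{1/2}B_2 L^2)}$ is calibrated so that this contribution is at most $(1-B_3^{-1/2}B_2^{-1})\|u\|_K^2$, yielding $\mathbb{E}[R(u)] \ge B_3^{-1/2}B_2^{-1}\|u\|_K^2$; the $1/2$ factor from Taylor propagates to become the $0.5$ sitting in front of $B_3^{-1/2}$ in the statement.

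\textbf{Step 4 (concentration and main obstacle).} Define $\tilde Q_n(T)$ to be a suitable normalisation of $\sup_{u\in T,\ u\ne 0} |R(u) - \mathbb{E}[R(u)]|/\|u\|_K^2$. The integrand in $R(u)$ factors as a product $fg$ with $f = (X^\top u)\,\mathbbm{1}\{|X^\top u|\le\tau\}$ and $g = \ell''(Y,X^\top\beta^*)\,(X^\top u)\,\mathbbm{1}\{|X^\top u|\le\tau\}$, each of which is a bounded-times-subGaussian linear functional indexed by $u\in T$. Applying Theorem 1.13 of \cite{mendelson2016upper} as in the proof of \Cref{prop:ControlProc} (with Gaussian complexity $\gamma(T,\Sigma)$ and diameters controlled by $B_3^{1/2}L$, and picking up an extra $B_2$ from the $\ell''$ factor) yields the advertised high-probability bound on $\tilde Q_n(T)$. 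Chaining $R(u)\ge \mathbb{E}[R(u)] - \tilde Q_n(T)\|u\|_K^2$ with Steps 2--3 delivers the RSC inequality. The main obstacle is the constant tracking in Step 3: the tail prefactor and the Gaussian exponent both depend on $\|u\|_\Sigma^2$, and one must use $\|u\|_\Sigma^2 \le B_3\|u\|_K^2 \le B_3$ in both places simultaneously (not separately) in order to obtain a uniform-in-$u$ lower bound on $\mathbb{E}[R(u)]/\|u\|_K^2$ that depends on $B_3$ only through the single factor $B_3^{-1/2}$ appearing in the statement; Step 4 is a mechanical reuse of the machinery already developed for \Cref{prop:ControlProc}.
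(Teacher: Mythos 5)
Your overall strategy (Taylor remainder, the stability bound $\ell''(y,X_i^\top\beta^*+sX_i^\top u)\ge e^{-3s|X_i^\top u|}\ell''(y,X_i^\top\beta^*)$, truncation, a lower bound on the expectation by calibrating $\tau$, and a uniform deviation term $\tilde Q_n(T)$) is the same as the paper's, and your Steps 1--3 are essentially correct (your integral-form remainder is in fact cleaner than the paper's mean-value form). The gap is in Step 4. You truncate with the hard indicator $\mathbbm{1}\{|X_i^\top u|\le\tau\}$ and then claim that controlling $\sup_{u\in T}|R(u)-\E R(u)|/\|u\|_K^2$ is ``a mechanical reuse'' of the product-process bound from \Cref{prop:ControlProc}. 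It is not: Mendelson's Theorem 1.13 (and any generic-chaining argument of this type) requires the increments $\|f_u-f_v\|_{\psi_2}$ of the indexing class to be controlled by a subGaussian metric on $T$, typically $\|\Sigma^{1/2}(u-v)\|$. The map $u\mapsto (X^\top u)\mathbbm{1}\{|X^\top u|\le\tau\}$ is discontinuous in $u$: for $u,v$ arbitrarily close, the indicator can flip and create an increment of size up to $\tau$ on an event of non-negligible probability, so the increment condition fails and the chaining bound does not apply. A related structural problem is that the hard truncation at an absolute level $\tau$ destroys the homogeneity of $R(u)$ in $u$, so the supremum over the cone $T$ no longer reduces to a supremum over a normalized index set.

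This is exactly why the paper replaces the indicator by the $1$-Lipschitz ramp $\varphi_\tau$ (equal to $|t|$ for $|t|\le\tau/2$, decreasing to $0$ on $[\tau/2,\tau]$, and $0$ beyond), applied to the \emph{normalized} variable $\langle X_i,u\rangle/\|u\|_\Sigma$: one keeps the two pointwise inequalities $\varphi_\tau^2(t)\le t^2$ and $\varphi_\tau(t)=0$ for $|t|>\tau$ needed for your Steps 2--3, while the Lipschitz property transfers the subGaussian increments of the linear process $u\mapsto X^\top u/\|u\|_\Sigma$ to the truncated class, and the normalization makes the class scale-invariant on the cone. With that substitution the concentration step goes through (the paper uses Theorem 5.5 of Dirksen for the squared process rather than the product-process theorem, but either tool works once the increments are subGaussian). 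Two minor further points: your discarding of the contribution on $\{|X_i^\top u|>\tau\}$ uses $\ell''\ge0$, which is not in \ref{eq:LossAssump} and should be stated as an assumption (it holds for the logistic loss and is implicit in \eqref{eq:Stability2Deriv}); and your absolute truncation level forces the tail estimate in Step 3 to be uniform over $\|u\|_\Sigma\le B_3^{1/2}$ rather than exactly scale-free, which you correctly flag but which the relative truncation avoids altogether.
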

Assumption~\eqref{eq:Stability2Deriv} can be verified for logistic regression easily. Therefore, if $\gamma(T,\Sigma)/\sqrt{n}\to0$ then for all $u\in T, \|u\|_{K} \le 1$, we get
\[
\frac{1}{n}\sum_{i=1}^n \ell(Y_i, X_i^{\top}\beta^* + X_i^{\top}u) - \frac{1}{n}\sum_{i=1}^n \ell(Y_i, X_i^{\top}\beta^*) -  \frac{1}{n}\sum_{i=1}^n u^{\top}X_i\ell'(Y_i, X_i^{\top}\beta^*) \ge \mathfrak{C}\|u\|_{\Sigma}^2,
\]
for a constant $\mathfrak{C}$ depending on $L, B, B_2, B_3$.
\begin{proof}
For notational convenience, let
\[
f_n(\beta) := \frac{1}{n}\sum_{i=1}^n \ell(Y_i, X_i^{\top}\beta).
\]
Define for $\beta^*$ and any $u\in T$,
\[
\Delta_2f_n(\beta^*, u) := f_n(\beta^* + u) - f_n(\beta^*) - u^{\top}\nabla f_n(\beta^*).
\]
Note that
\[
\Delta_2f_n(\beta^*, u) = \frac{1}{n}\sum_{i=1}^n \ell''(Y_i, X_i^{\top}(\beta^* + su))\langle X_i, u\rangle^2,
\]
for some $s\in[0, 1]$. From the stability property~\eqref{eq:Stability2Deriv} of $\ell''(\cdot, \cdot)$, we get that
\[
\ell''(Y_i, X_i^{\top}(\beta^* + su))\langle X_i, u\rangle^2 \ge \exp(-3|\langle u, X_i\rangle|)\ell''(Y_i, X_i^{\top}\beta^*)\langle X_i, u\rangle^2.
\]
This implies that
\begin{equation}
\begin{split}
\Delta_2f_n(\beta^*, u) &= \frac{1}{n}\sum_{i=1}^n \ell''(Y_i, X_i^{\top}\beta^*)\exp(-3|\langle u, X_i\rangle|)\langle X_i, u\rangle^2\\
&= \|u\|_{\Sigma}^2\frac{1}{n}\sum_{i=1}^n \ell''(Y_i, X_i^{\top}\beta^*)\exp(-3|\langle u, X_i \rangle|)\left(\frac{\langle u, X_i\rangle}{\|u\|_{\Sigma}}\right)^2
\end{split}
\end{equation}
Now define the function
\[
\varphi_{\tau}(t) = \begin{cases}|t|,&\mbox{if }|t| \le \tau/2,\\
\tau - |t|,&\mbox{if }\tau/2 \le |t| \le \tau,\\
0, &\mbox{otherwise.}\end{cases}
\]
It is clear that
\[
\frac{\langle X_i, u\rangle^2}{\|u\|_{\Sigma}^2} ~\ge~ \varphi_{\tau}^2\left(\frac{\langle X_i, u\rangle}{\|u\|_{\Sigma}}\right).
\]
Further $\varphi_{\tau}(\cdot)$ is a $1$-Lipschitz function. Using these properties, we get
\begin{equation}
\begin{split}
\Delta_2f_n(\beta^*, u) &\ge \|u\|_{\Sigma}^2\frac{1}{n}\sum_{i=1}^n \ell''(Y_i, X_i^{\top}\beta^*)\exp(-3|\langle u, X_i\rangle|)\varphi_{\tau}^2\left(\frac{\langle X_i, u\rangle}{\|u\|_{\Sigma}}\right)\\
&\ge \|u\|_{\Sigma}^2\exp(-3\tau\|u\|_{\Sigma})\frac{1}{n}\sum_{i=1}^n \ell''(Y_i, X_i^{\top}\beta^*)\varphi_{\tau}^2\left(\frac{\langle X_i, u\rangle}{\|u\|_{\Sigma}}\right).
\end{split}
\end{equation}
To complete the proof note that
\begin{equation}
\begin{split}
&\frac{1}{n}\sum_{i=1}^n \ell''(Y_i, X_i^{\top}\beta^*)\varphi_{\tau}^2\left(\frac{\langle X_i, u\rangle}{\|u\|_{\Sigma}}\right)\\ 
&\qquad= \frac{1}{n}\sum_{i=1}^n \mathbb{E}\left[\ell''(Y_i, X_i^{\top}\beta^*)\varphi_{\tau}^2\left(\frac{\langle X_i, u\rangle}{\|u\|_{\Sigma}}\right)\right]\\
&\qquad\qquad+ \frac{1}{n}\sum_{i=1}^n \left\{\ell''(Y_i, X_i^{\top}\beta^*)\varphi_{\tau}^2\left(\frac{\langle X_i, u\rangle}{\|u\|_{\Sigma}}\right) - \mathbb{E}\left[\ell''(Y_i, X_i^{\top}\beta^*)\varphi_{\tau}^2\left(\frac{\langle X_i, u\rangle}{\|u\|_{\Sigma}}\right)\right]\right\}.
\end{split}
\end{equation}
We now control the first term by noting that
\begin{equation}
\begin{split}
&\frac{1}{n}\sum_{i=1}^n \mathbb{E}\left[\ell''(Y_i, X_i^{\top}\beta^*)\left\{\left(\frac{\langle X_i, u\rangle}{\|u\|_{\Sigma}}\right)^2 - \varphi_{\tau}^2\left(\frac{\langle X_i, u\rangle}{\|u\|_{\Sigma}}\right)\right\}\right]\\ 
&\qquad\le \frac{1}{n}\sum_{i=1}^n \mathbb{E}\left[\ell''(Y_i, X_i^{\top}\beta^*)\left(\frac{\langle X_i, u\rangle}{\|u\|_{\Sigma}}\right)^2\mathbbm{1}\left\{|\langle u, X_i\rangle| \ge \tau\|u\|_{\Sigma}/2\right\}\right]\\
&\qquad\le B_2\frac{1}{n}\sum_{i=1}^n \mathbb{E}\left[\frac{\langle u, X_i\rangle^2}{\|u\|_{\Sigma}^2}\mathbbm{1}\{|\langle u, X_i\rangle| \ge \tau\|u\|_{\Sigma}/2\}\right]\\
&\qquad\le B_2\int_{\tau/2}^{\infty} 2t\exp\left(-\frac{t^2}{2L^2}\right)dt = 2B_2L^2\exp\left(-\frac{\tau^2}{8L^2}\right),
\end{split}
\end{equation}
using the boundedness of $\ell''$ and $L$-sub-Gaussianity of $X_i$. % as well as the definition of $\tau$. 
This implies that
\begin{equation}
\begin{split}
\frac{1}{n}\sum_{i=1}^n \mathbb{E}\left[\ell''(Y_i, X_i^{\top}\beta^*)\varphi_{\tau}^2\left(\frac{\langle X_i, u\rangle}{\|u\|_{\Sigma}}\right)\right] &\ge \frac{\|u\|_{K_n}}{\|u\|_{\Sigma}} - 2B_2L^2\exp\left(-\frac{\tau^2}{8L^2}\right)dt\ge \frac{1}{2B_3^{1/2}},
\end{split}
\end{equation}
for $\tau := 2\sqrt{2}L(\log(4B_3^{1/2}B_2L^2))^{1/2}$. 

Combining this with the lower bound on $\Delta_2f_n(\beta^*, u)$, we get for all $u\in T$,
\[
\Delta_2f_n(\beta^*, u) \ge \|u\|_{\Sigma}^2\exp(-3\tau\|u\|_{\Sigma})\left[0.5B_3^{-1/2} - \tilde{Q}_n(T)\right],
\]
where
\[
\tilde{Q}_n(T) := \max_{u\in T}\left|\frac{1}{n}\sum_{i=1}^n \left\{\ell''(Y_i, X_i^{\top}\beta^*)\varphi_{\tau}^2\left(\frac{\langle X_i, u\rangle}{\|u\|_{\Sigma}}\right) - \mathbb{E}\left[\ell''(Y_i, X_i^{\top}\beta^*)\varphi_{\tau}^2\left(\frac{\langle X_i, u\rangle}{\|u\|_{\Sigma}}\right)\right]\right\}\right|.
\]
Before bounding $\tilde{Q}_n(T)$, note from assumption~\ref{eq:LossAssump} that
\[
\|u\|_K^2 \le B_2\|u\|_{\Sigma}^2 \le B_2B_3\|u\|_K,
\] 
and hence for all $u\in T$
\begin{equation}\label{eq:RSCinK}
\Delta_2f_n(\beta^*, u) \ge B_2^{-1}\|u\|_{K}^2\exp(-3B_3^{1/2}\tau\|u\|_{K})\left[0.5B_3^{-1/2} - \tilde{Q}_n(T)\right],
\end{equation}
We now bound $\tilde{Q}_n(T)$.
Define the function class
\[
F := \{(x,y)\mapsto (\ell''(y, x^{\top}\beta^*))^{1/2}\varphi_{\tau}(\langle u, x\rangle/\|u\|_{\Sigma}):\,u\in T\}.
\]
From this definition, it follows that
\[
\tilde{Q}_n(T) = \sup_{f\in F}\left|\frac{1}{n}\sum_{i=1}^n \left\{f^2(X_i, Y_i) - \mathbb{E}[f^2(X_i, Y_i)]\right\}\right|.
\]
We now apply Theorem 5.5 of~\cite{dirksen2015tail} to get with probability $1 - \exp(-t)$,
\[
\tilde{Q}_n(T) \le C\left(\frac{\gamma(F)\mbox{diam}(F)}{\sqrt{n}} + \frac{\gamma^2(F)}{n} + \frac{t^{1/2}\mbox{diam}^2(F)}{\sqrt{n}} + \frac{t\mbox{diam}^2(F)}{n}\right),
\]
for some constant $C > 0$ where
\begin{equation*}
\begin{split}
\mbox{diam}(F) &:= \sup_{f\in F}\|f(X, Y)\|_{\psi_2} \le \sup_{u\in T}\frac{\|(\ell''(Y, X^{\top}\beta^*))^{1/2}\varphi_{\tau}(\langle u, X\rangle/\|u\|_{\Sigma})\|_{\psi_2}}{\|u\|_{\Sigma}}\\ 
&\le B_2^{1/2}\sup_{u\in T}\frac{\|\langle u, X\rangle\|_{\psi_2}}{\|u\|_{\Sigma}} \le B_2^{1/2}L,
\end{split}
\end{equation*}
and
\begin{equation*}
\begin{split}
\gamma(F) &:= B_2^{1/2}\mathbb{E}\left[\max_{u\in T}\frac{|\langle \Sigma^{1/2}u, g\rangle|}{\|u\|_{\Sigma}}\right] = B_2^{1/2}\gamma(T,\Sigma).
\end{split}
\end{equation*}
% since
% \begin{equation}
% \begin{split}
% &\left\|(\ell''(Y, X^{\top}\beta^*))^{1/2}\varphi_{\tau}\left(\frac{\langle u, X\rangle}{\|u\|_{\Sigma}}\right) - (\ell''(Y, X^{\top}\beta^*))^{1/2}\varphi_{\tau}\left(\frac{\langle v, X\rangle}{\|v\|_{\Sigma}}\right)\right\|_{\psi_2}\\
% &\quad\le B_2^{1/2}\left\|\varphi_{\tau}\left(\frac{\langle u, X\rangle}{\|u\|_{\Sigma}}\right) - \varphi_{\tau}\left(\frac{\langle v, X\rangle}{\|v\|_{\Sigma}}\right)\right\|_{\psi_2}\\
% &\quad\le B_2^{1/2}\left\|\left\langle\frac{u}{\|u\|_{\Sigma}} - \frac{v}{\|v\|_{\Sigma}}, X\right\rangle\right\|_{\psi_2}\\
% &\quad\le B_2^{1/2}\left\|\frac{u}{\|u\|_{\Sigma}} - \frac{v}{\|v\|_{\Sigma}}\right\|_{\Sigma} = B^{1/2}\mbox{Var}\left(\frac{\langle \Sigma^{1/2}u, g\rangle}{\|u\|_{\Sigma}} - \frac{\langle \Sigma^{1/2}v, g\rangle}{\|v\|_{\Sigma}}\right).
% \end{split}
% \end{equation}
Substituting these quantities in~\eqref{eq:RSCinK} for $\|u\|_K \le 1$ implies the result.
\end{proof}
The following result proves a rate result for linear and logistic regression with $h(\beta)=\lambda\|\beta\|_1$ (based on the restricted strong convexity result above). Define for the loss $\ell(\cdot,\cdot)$,
\[
f_n(\beta) := \frac{1}{n}\sum_{i=1}^n \ell(Y_i, X_i^{\top}\beta)\quad\mbox{and}\quad \hat{\beta} := \argmin_{\beta\in\mathbb{R}^p}\,f_n(\beta) + \lambda\|\beta\|_1
\]
Recall from~\Cref{lemma:penalizedLassoBelongstoCone} that if $\lambda = L\sigma^*(1 + 3\xi)\sqrt{2\log(p/s)/n}$ in case of squared loss and $\lambda = (L/2)(1 + 3\xi)\sqrt{2\log(p/s)/n}$ in case of logistic loss, for some $\xi > 0$ then on the event~\eqref{event-h-subgaussian},
\[
\hat{\beta} - \beta^* \in T_{\texttt{lasso}}(s(6+2\xi^{-1})^2) := \{\delta\in\mathbb{R}^p:\|\delta\|_1 \le \sqrt{s}\|\delta\|(6 + 2\xi^{-1})\}.
\]
This holds for both the linear and logistic Lasso case.
\propRateLogisticLasso*
Assumption~\eqref{eq:RSC} is verified in~\Cref{prop:RSCVerifiedC} and the assumptions of this proposition are satisfied for both squared and logistic loss.
% \pierre{Why? I remember thinking about this but I don't see it clearly now.
% We should clarify.
% Is the inequality highlighted in the wrong direction?
% }
% but from the proof the result also holds for \hl{$\|u\|_{K} \le 1$} under Assumption~\ref{eq:LossAssump}. 
Proposition above proves the rate for logistic Lasso with $\sqrt{\log(p/s)}$ instead of (usually seen) $\sqrt{\log(p)}$.
See~\cite{dedieu2018error} for a similar result
that requires more stringent conditions on $\Sigma$.

The argument in \Cref{prop:RateLogisticLasso-mainpaper} is not specific
to the Lasso. The same argument yields a similar bound for the Group-Lasso
with rate $\sqrt{sd+\log(M/s)}/n\sqrt n$ under the setting of \Cref{lemma:GroupLassoBelongstoCone}
using the Gaussian-width bound from \Cref{lemma:GroupLassoBoundGamma}.
\begin{proof}[Proof of \Cref{prop:RateLogisticLasso-mainpaper}]
Set $\delta = \hat{\beta} - \beta^*$. From the definition of $\hat{\beta}$ we get that
\begin{equation}
\begin{split}
f_n(\beta^* + \delta) + \lambda\|\beta^* + \delta\|_1 &\le f_n(\beta^*) + \lambda\|\beta^*\|_1\\
f_n(\beta^* + \delta) - f_n(\beta^*) &\le \lambda(\|\beta^*\|_1 - \|\beta^* + \delta\|_1).
\end{split}
\end{equation}
Adding $-\delta^{\top}\nabla f_n(\beta^*)$ from both sides, we get
\begin{equation}
\begin{split}
f_n(\beta^* + \delta) - f_n(\beta^*) - \delta^{\top}\nabla f_n(\beta^*) &\le -\delta^{\top}\nabla f_n(\beta^*) + \lambda(\|\beta^*\|_1 - \|\beta^* + \delta\|_1).
\end{split}
\end{equation}
Applying triangle inequality and then H\"older's inequality on $S$ (the support of $\beta^*$), we obtain
\begin{equation}
\begin{split}
f_n(\beta^* + \delta) - f_n(\beta^*) - \delta^{\top}\nabla f_n(\beta^*) &\le -\delta^{\top}\nabla f_n(\beta^*) + \lambda(\|\delta_S\|_1 - \|\delta_{S^c}\|_1)\\
&\le -\delta^{\top}\nabla f_n(\beta^*) + \lambda(\sqrt{s}\|\delta\|_2 - \|\delta_{S^c}\|_1).
\end{split}
\end{equation}
From~\Cref{lem:LemmaA1}, take $Z := n^{1/2}\Sigma^{-1/2}\nabla f_n(\beta^*)$ (this will be $\tilde{Z}$ for logistic loss). Define $U = \Sigma^{1/2}Z/(L\sigma*)$ for squared loss and
$U = 2\Sigma^{1/2}\tilde{Z}/L$ for logistic loss (as in the proof of Lemma~\ref{lemma:penalizedLassoBelongstoCone}), we get on event~\eqref{event-h-subgaussian}
\begin{equation}%\label{eq:UprBnd}
\begin{split}
f_n(\beta^* + \delta) - f_n(\beta^*) - \delta^{\top}\nabla f_n(\beta^*) &\le 
\begin{cases}
{\sigma^*}{Ln^{-1/2}}\left[U^{\top}\delta + \sigma^*L^{-1}\lambda n^{1/2}\|\delta\|\sqrt{s}\right],&\mbox{for squared loss}\\
0.5{Ln^{-1/2}}\left[U^{\top}\delta + 2L^{-1}\lambda n^{1/2}\|\delta\|\sqrt{s}\right],&\mbox{for logistic loss}
\end{cases}
\end{split}
\end{equation}
We will now complete the proof for squared loss and the result for logistic loss follows by replacing $\sigma^*$ by $1/2$.
\begin{equation}\label{eq:UprBnd}
\begin{split}
f_n(\beta^* + \delta) - f_n(\beta^*) - \delta^{\top}\nabla f_n(\beta^*) &\le \sigma^*{L\|\delta\|}(2 + 5\xi)\sqrt{2s\log(p/s)/n}\\
&\le \frac{\sigma^*L\|\delta\|_K(2 + 5\xi)}{B_3^{1/2}\phi(T)}\sqrt{2s\log(p/s)/n}.
\end{split}
\end{equation}
The last inequality above follows from the fact that $\|u\|_K \ge B_3^{-1/2}\|u\|_{\Sigma} \ge B_3^{-1/2}\phi(T)^{-1}\|u\|$. Now set $t = \min\{1, \|\delta\|_K^{-1}{\sigma^*L(2 + 5\xi)\sqrt{s\log(p/s)/n}/(2B_3^{1/2}\phi(T)\theta^2)}\}$ so that $\|t\delta\|_{K} \le 1$ by assumption~\eqref{eq:AssumptionSP}. Combining~\eqref{eq:RSC} with $u = t\delta$ and~\eqref{eq:UprBnd} yields
\begin{equation}
\begin{split}
\theta^2t^2\|\delta\|_{K}^2 &\le f_n(\beta^* + u) - f_n(\beta^*) - u^{\top}\nabla f_n(\beta^*)\\
&\le t[f_n(\beta^* + \delta) - f_n(\beta^*) - \delta^{\top}\nabla f_n(\beta^*)] \le t\frac{\sigma^*L\|\delta\|_K(2 + 5\xi)}{B_3^{1/2}\phi(T)}\sqrt{2s\log(p/s)/n}.
\end{split}
\end{equation}
Rearranging this yields,
\[
t \le \frac{\sigma^*L(2 + 5\xi)}{B_3^{1/2}\phi(T)\theta^2\|\delta\|_K}\sqrt{2s\log(p/s)/n}.
\]
By definition of $t$, this inequality implies $t = 1$ and hence
\[
\frac{\sigma^*L(2 + 5\xi)}{B_3^{1/2}\phi(T)\theta^2\|\delta\|_K}\sqrt{2s\log(p/s)/n} \ge 1 \quad\Rightarrow\quad \|\delta\|_K \le \frac{\sigma^*L(2 + 5\xi)}{B_3^{1/2}\phi(T)\theta^2}\sqrt{2s\log(p/s)/n}.
\]
\end{proof}

% \ref{lem:LemmaA1}
% \ref{prop:RateLogisticLasso}

\section[Sparsity of eta]{Proof of sparsity of $\eta$}
\label{section-sparsity-eta-appendix}
% \begin{proposition}
% \label{prop:SparsityEta}
% Fix $\lambda$ as in Lemma~\ref{lemma:penalizedLassoBelongstoCone} for both squared and logistic loss and $T$ be the cone defined in~\eqref{eq:cone-lasso-penalized}. If $\|K\|_{op} \le C_{\max} < \infty$ and the assumptions of~\Cref{prop:RateLogisticLasso} hold, then 
% \[
% \mathbb{P}\left(\|\eta\|_0 \le \tilde{C}s\right) \ge 1 - \frac{2}{\xi^2\log(p/s)(p/s)^{\xi}},
% \]
% where $\tilde{C} := 1 + C_{\max}(5 + 2\xi^{-1})^2/(B_3\phi^2(T)).$
% \end{proposition}
% The assumption of $\|K\|_{op} \le C_{\max}$ can be relaxed using Lemma 3 of~\cite{belloni2013least}.  
\propSparsityEta*
\begin{proof}[Proof of~\Cref{prop:SparsityEta}]
The estimator $\eta$ is defined by
\[
    \eta := \argmin_{\beta\in\mathbb{R}^p}\,\frac{1}{2}\left\|K^{1/2}\left(\beta - \beta^*\right) - n^{-1/2}Z\right\|^2 + h(\beta)
\]
where $h$ is the Group-Lasso penalty given by \eqref{h-GL} for some tuning parameter $\lambda>0$, and 
\[
Z := \frac{1}{\sqrt n}\sum_{i=1}^n K^{-1/2}X_i\ell'(Y_i,X_i^{\top}\beta^*).
\]
For the squared loss, $Z$ is also given in \eqref{Z-squared-loss}
while $Z$ for the logistic loss is given by $\tilde Z$ in \eqref{Z-tilde-logistic}.
By the KKT conditions, we get
\[
    0 \in K\left(\eta - \beta^*\right) - n^{-1/2}K^{1/2}Z + \partial h(\eta) 
\]
where $\partial h(\eta)$ is the sub-differential of $h$ at $\eta$.
This implies that for any group $k$ such that $\eta_{G_k} \neq 0$,
\begin{equation}\label{eq:KKTCondEta}
    \left\|\left(K\left(\eta - \beta^*\right) - n^{-1/2}K^{1/2}Z\right)_{G_k}\right\| = \lambda.
\end{equation}
Let $\hat{A}\subset [M] $ be the set of $s$ largest values of $\|(K^{1/2}Z)_{G_k}\|$.
Let $\mbox{supp}_G(\eta) = \{k\in[M]:\eta_{G_k}\ne 0\}$
and let $\mathcal{B}$ be a subset of $\mbox{supp}_G(\eta)$
such that $\hat{A}\cap \mathcal{B} = \emptyset$ (or equivalently $\mathcal{B} = \hat{A}^c\cap
\mbox{supp}_G(\eta)$).
Note that
\begin{equation}
\begin{split}
|\mbox{supp}_G(\eta)| &= |\mbox{supp}_G(\eta)\cap \hat{A}| + |\mbox{supp}_G(\eta)\cap \hat{A}^c| = |\mbox{supp}_G(\eta)\cap\hat{A}| + |\mathcal{B}|\\
&\le |\hat{A}| + |\mathcal{B}| = s + |\mathcal{B}|.
\end{split}
\end{equation}
Hence it is enough to bound $|\mathcal{B}|$. Set $\hat{\lambda} = n^{-1/2}\max_{k\in\hat{A}^c}\|(K^{1/2}Z)_{G_k}\|$. Summing the squares of the KKT condition~\eqref{eq:KKTCondEta} above for $j\in \mathcal{B}$ yields
\begin{equation}
\begin{split}
    |\mathcal{B}|\lambda^2 &= \sum_{k\in \mathcal{B}} \left(K\left(\eta - \beta^*\right) - n^{-1/2}K^{1/2}Z\right)_{G_k}^2\\
&= (n^{-1/2}Z - K^{1/2}(\eta - \beta^*))^{\top}\left(\sum_{k\in \mathcal{B}}\sum_{j\in G_k}K^{1/2}e_je_j^{\top}K^{1/2}\right)(n^{-1/2}Z - K^{1/2}(\eta - \beta^*))\\
&= (n^{-1/2}Z - K^{1/2}(\eta - \beta^*))^{\top}M(n^{-1/2}Z - K^{1/2}(\eta - \beta^*)),
\end{split}
\end{equation}
where $M = \sum_{k\in\mathcal B}\sum_{j\in G_k}K^{1/2}e_je_j^{\top}K^{1/2}$. Taking square root and using triangle inequality, we get
\begin{equation}\label{eq:LambdaHatLambdaBound}
\begin{split}
    \sqrt{|\mathcal{B}|}\lambda &\le \sqrt{\sum_{k\in \mathcal{B}} \|n^{-1/2}(K^{1/2}Z)_{G_k}\|^2} + \|M^{1/2}K^{1/2}(\eta - \beta^*)\|\\
&\le \sqrt{|\mathcal{B}|}\hat{\lambda} + \|M^{1/2}K^{1/2}(\eta - \beta^*)\|.
\end{split}
\end{equation}
The second inequality above follows from the fact that $\mathcal{B}\subset\hat{A}^c$
and the definition of $\hat A$.
By~\eqref{inequality-lambda-hat-GL},
with probability at least given by the right hand side of \eqref{eq:above-event-group-lasso},
$\hat\lambda\le(1+\xi)^{-1}\lambda$ and hence
\begin{equation}
    \sqrt{|\mathcal{B}|} ~\le~ (1+\xi^{-1})\|M^{1/2}K^{1/2}(\eta - \beta^*)\|/\lambda
    \label{plugin-back-group-lasso}
\end{equation}
Therefore, $\sqrt{|\mathcal{B}|} \le (1 + \xi^{-1})\|M\|^{1/2}_{op}\|K^{1/2}(\eta - \beta^*)\|/\lambda$.
By strong convexity of the quadratic program \eqref{eq:FirstOrderApproxRegularized}
(cf., e.g. \cite[Lemma 1]{bellec2018prediction}
or \cite[Lemma A.2]{bellec2016slope}) we have
$$
\tfrac 1 2 \|K^{1/2}(\eta-\beta^*)\|^2
\le
n^{-1/2}Z^T\Sigma^{1/2}(\eta-\beta^*)
+ \lambda\sum_{k=1}^M (\|\beta^*_{G_k}\| - \|\eta_{G_k}\|).
$$
On event \eqref{eq:above-event-group-lasso}, by the rightmost inequality
of \eqref{final-conclusion-lemma-group-lasso-cone} in the proof of \Cref{lemma:GroupLassoBelongstoCone} we have $\eta-\beta^*\in T$
for the set $T$ defined in \Cref{lemma:GroupLassoBoundGamma},
% and by the leftmost inequality
and by the inequalities
of \eqref{final-conclusion-lemma-group-lasso-cone}, the previous display yields
\begin{equation}
    \label{risk-bound-eta-GL}
\tfrac 1 2 \|K^{1/2}(\eta-\beta^*)\|^2
\le (3+\xi)\sqrt s \lambda \|\eta-\beta^*\|
\le (3+\xi)\sqrt s \lambda \phi(T)^{-1} B_3 \|K^{1/2}(\eta-\beta^*)\|
\end{equation}
and $\|K^{1/2}(\eta-\beta^*)\| \le 2(3+\xi)\sqrt{s}\lambda \phi(T)^{-1} B_3$.
Plugging this bound back in \eqref{plugin-back-group-lasso} we obtain
\begin{equation}
\begin{split}
\sqrt{|\mathcal{B}|} 
&\le \sqrt{s}\;
\|K_{\bar G,\bar G}\|_{op}^{1/2}
\;
2(3+\xi)(1+\xi^{-1})B_3\phi(T)^{-1} 
\end{split}
\end{equation}
where $\bar G=\cup_{k\in\mathcal{B}}G_k$.
Hence we obtain $|\mathcal B| \lesssim s$ as required
for any $\mathcal B$ such that the ratio 
$\|K_{\bar G,\bar G}\|_{op}^{1/2}/\phi(T)$ is bounded.

The proof for $\hbeta$ (in the squared loss case) follows the same argument.
The only major difference is that we have the empirical Gram matrix
$\X^T\X/n$ instead of $\Sigma$ (where $\X$ is the design matrix with rows $X_1,...,X_n$),
and we need to bound the quantities
$\|(\X^T\X/n)_{\bar G,\bar G}\|_{op}$
and $\|\X(\hbeta-\beta^*)\|/\sqrt n$.
It is enough to notice that
$\|\X(\hbeta-\beta^*)\|/\sqrt n
=\|\Sigma^{1/2}(\hbeta-\beta^*)\|(1+o(1))$
and 
$\|(\X^T\X/n)_{\bar G,\bar G}\|_{op}\le
(1+o(1))\|\Sigma_{\bar G,\bar G}\|_{op}$
by an application of \cite{plan_vershynin_liaw2017simple}
with the Gaussian-width bound given in \Cref{lemma:GroupLassoBoundGamma}.
\end{proof}

% Write $\eta$ as
% $$\argmin_{b\in\R^p} \|\Sigma^{1/2}(b-\beta^*) + e\|^2/2 + \lambda \|b\|_1$$
% for vector $e$ ($e$ should be subgaussian with respect to the identity for both logistic
% and squared loss I believe, for logistic replace $\Sigma$ by $K$).
% The KKT conditions of $\eta$ yields that for any $j: \eta_j \ne 0$,
% $$e_j^T \Sigma^{1/2}(e - \Sigma^{1/2}(\eta-\beta^*)) = \pm \lambda.$$
% Let $\hat A$ be the set of $s$ largest values of $U = \Sigma^{1/2} e$.
% Let $B$ be a subset of $\supp(\eta)$ such that $\hat A \cap B = \emptyset$
% and let $\hat \lambda = \max_{j\notin \hat A} | e_j \Sigma^{1/2} e|$.
% Summing the squares of the KKT conditions above for $j\in B$ yields
% $$|B| \lambda^2 \le\sum_{j\in B} (e-\Sigma^{1/2})^T\Sigma^{1/2} e_j e_j^T \Sigma^{1/2}(e-\Sigma^{1/2}).$$
% Taking the square root and using the triangle inequality yields
% $$\sqrt{|B|}\lambda \le \sqrt{|B|} \hat \lambda + \| M^{1/2} (\Sigma^{1/2}(\eta-\beta^*))\|
% \le
% \sqrt{|B|} \hat \lambda + \|M\|_{op}^{1/2} \| \Sigma^{1/2}(\eta-\beta^*)\|
% $$
% where $M = \sum_{j\in B} \Sigma^{1/2} e_j e_j^T \Sigma^{1/2}$.
% Typically $\lambda \ge (1+\xi)\hat \lambda$ and $\|M\|_{op} =\|\Sigma_{B,B}\|_{op} \le C_{max}$.
% Hence 
% $$\xi\sqrt{B} \le C_{max} \|\Sigma^{1/2}(\eta-\beta^*)\| 
% \le C_{\max}\phi(T)^{-1} \sqrt{ s \log(p/s)/n}.$$
% This inequality cannot be verified if $|B|$ is larger than a constant time $s$.

\end{document}